\newtheorem{definition}{Definition}[section]
\newtheorem{lemma}[definition]{Lemma}
\newtheorem{claim}[definition]{Claim}
\newtheorem{remark}[definition]{Remark}
\newtheorem{fact}[definition]{Fact}
\newtheorem{theorem}[definition]{Theorem}
\newtheorem{corollary}[definition]{Corollary}
\def\g{\mathrm{minimal}}		
\def\plim{{\mathrm{lim}}}	
\def\alabel#1{\label{#1}}
\def\B{{\mathcal{B}}}		
\def\dual#1{{#1}^\perp}
\def\d{\mathbf{d}}
\def\defined{\downarrow}
\def\Nset{\mathbb{N}}
\def\num#1{{\overline{#1}^\A}}
\def\yohji#1{[{#1}]_\sim}	
\def\IQC{\mathsf{IQC}}		
\def\A{\mathcal{A}}
\def\Repr{\mathrm{RpFn}}
\def\Nset{\mathbb{N}}
\def\pa{\mathsf{PA}}		
\def\ha{\mathsf{HA}}		
\def\pra{\ha}			
\def\eon{\mathsf{EON}}
\def\k{\mathbf{k}}		
\def\s{\mathbf{s}}		
\def\cased{\mathbf{d}}
\def\suc{\mathbf{s_N}}
\def\pred{\mathbf{p_N}}
\def\realize#1#2{#1\;\mathbf{r}\;#2}
\def\p{\mathbf{p}}
\def\car{\mathbf{p_0}}
\def\cdr{\mathbf{p_1}}
\def\lequiv{\leftrightarrow}
\def\dne#1{\Sigma^0_{#1}\mbox{-}\mathbf{DNE}}
\def\pdne#1{\Pi^0_{#1}\mbox{-}\mathbf{DNE}}
\def\slem#1{\Sigma^0_{#1}\mbox{-}\mathbf{LEM}}
\def\plemm#1{\Pi^0_{#1}\mbox{-}\mathbf{LEM}'}
\def\slemm#1{\Sigma^0_{#1}\mbox{-}\mathbf{LEM}'}
\def\plem#1{\Pi^0_{#1}\mbox{-}\mathbf{LEM}}
\def\dlem#1{\Delta^0_{#1}\mbox{-}\mathbf{LEM}}
\def\fpdlem#1{\mathsf{fp}\Delta^0_{#1}\mbox{-}\mathbf{LEM}}
\def\bsdne#1{B\Sigma^0_{#1}\mbox{-}\mathbf{DNE}}
\def\sdne#1{\Sigma^0_{#1}\mbox{-}\mathbf{DNE}}
\def\sdneprime#1{\Sigma^0_{#1}\mbox{-}\mathbf{DNE}'}
\def\fip#1{F_{#1}\mbox{-}\mathbf{IP}}
\def\pllpo#1{(\Pi^0_{#1}\vee \Pi^0_{#1})\mbox{-}\mathbf{DNE}}
\def\sllpo#1{\Sigma^0_{#1}\mbox{-}\mathbf{LLPO}}
\def\dneg{\neg\neg}
\def\Q{\overline{Q}}		
\begin{document}

\title{Realizability Interpretation of PA by Iterated Limiting PCA}
\author[Y. Akama]{Yohji Akama}
\address{Mathematical Institute, Tohoku University, Aoba, Sendai, JAPAN, 980-8578.
 }

\maketitle
\begin{abstract} 
 For any partial combinatory algebra~(\textsc{pca} for short) $\A$, the class of $\A$-representable partial
functions from $\Nset$ to $\A$ quotiented by the filter of
cofinite sets of $\Nset$, is a \textsc{pca} such that the representable
partial functions are exactly the
limiting partial functions of $\A$-representable partial
functions~(Akama, ``Limiting partial combinatory algebras''
 Theoret. Comput. Sci. Vol.~311
2004).
The $n$-times iteration of this construction results in a \textsc{pca}
that represents any $n$-iterated limiting partial recursive functions,
and the inductive limit of the \textsc{pca}s over all $n$ is a
\textsc{pca} that represents any arithmetical, partial
function. Kleene's realizability interpretation over the former
\textsc{pca} interprets the logical principles of double negation
elimination for $\Sigma^0_n$-formulas, and that over the latter
\textsc{pca} interprets Peano's arithmetic~($\pa$ for short).  A
hierarchy of logical systems between Heyting's arithmetic and $\pa$ is
used to discuss the prenex normal form theorem, the relativized
 independence-of-premise schemes, and ``PA is an unbounded extension of HA.''
\end{abstract}
\section{Introduction}\alabel{sec:intro}

\subsection{Hierarchical of semi-classical arithmetical principles}
Following Section~1.3.2 of \cite{MR48:3699}, by \emph{Heyting's
arithmetic} $\ha$, we mean an intuitionistic predicate calculus $\IQC$
with equality such that (1) the language of $\ha$ is a first-order
language $L_{\pra}$ , with logical connectives $\forall, \exists, \to,
\wedge, \vee, \neg$; numeral variables $l,m,n,\ldots$; a constant symbol
$0$~(zero), a unary function symbol $S$~(successor), constant function
symbols for all primitive recursive functions, and a binary predicate
symbol $=$~(equality between numbers). \emph{Bounded quantifications}
$\forall n<t.\ A$ and $\exists n<t. \ A$ are abbreviations of $\forall
n(f(n,t)=1\to A)$ and $\exists n(f(n,t)=1 \wedge A)$, where $f(n,t)$ is
a primitive recursive function such that $f(n,t)=1$ if and only if
$n<t$; and (2) besides the axioms for the equality, the axioms of $\ha$
are the defining equality of the primitive recursive functions and
so-called Peano's axiom $\forall n(\neg S(n) = 0)$, $\forall n \forall m
(S(n)=S(m)\to n=m)$, and an axiom scheme called \emph{the induction
scheme}:
\begin{align*}
B [0] \wedge \forall n (B [n] \to B[S(n)])\to \forall n  B [n]\quad(\mbox{$B$ is any formula.})
\end{align*} 
By \emph{Peano's arithmetic} $\pa$, we mean the formal system obtained
from $\ha$ by adjoining one of classical axiom scheme, such as \emph{the
law of excluded middle} $A\vee\neg A$ ($A$ is any $L_\ha$-formula),
and/or \emph{the principle of double negation elimination} $\neg\neg
A\to A$ ($A$ is any $L_\ha$-formula). \cite{MR0015346} interpreted every
theorem of $\ha$ by a recursive function/operation.

Kleene introduced \emph{arithmetical hierarchy} of integer sets, over
the class of recursive sets. The complexity of an integer set $X$ in the
arithmetical hierarchy is measured by the number of alternation of
the quantifiers of the relation that defines the set $X$.  The
arithmetical hierarchy has a close relation to \emph{oracle}
computation, such as the \emph{complete sets} and the \emph{jump
hierarchy}~(see \cite{MR982269} for example).

According to Section~0.30 of \cite{MR1748522}, a $\Sigma^0_k$-formula and a
$\Pi^0_k$-formula are the following formulas preceded by $k$
alternating quantifiers, respectively for $k\ge0$:
\begin{itemize}
\item A $\Sigma^0_k$-formula is of the form $\exists {n_1}\forall {n_2}\cdots
  Q  {n_{k-1}} \Q  {n_k}.\ P [{n_1},\ldots, {n_k}]$.

 \item A $\Pi^0_k$-formula is of the form $
  \forall {m_1} \exists {m_2} \cdots \Q {m_{k-1}}  Q {m_k}.\  P [{m_1},\ldots, {m_k}]$.
\end{itemize}
Here $P[n_1, \ldots, n_k]$ is an $L_\pra$-formula with all the
quantifiers being bounded, but may contain free variables other than its
indicated variables. The $L_\pra$-formula $P[m_1, \ldots, m_k]$ is
understood similarly.  

A formula in \emph{prenex normal form}~(\textsc{pnf} for short) is, by
definition, a series of quantifiers followed by a quantifier-free
formula.  A formula 
\begin{align*}
\exists n_1 \forall m_1 \exists n_2 \forall m_2
\cdots . P[n_1, m_1, n_2, m_2,\ldots]
\end{align*} in \textsc{pnf} is true in
classical logic, if and only if the formula represents a game between
the quantifiers $\exists$ and $\forall$ where the player $\exists$ has a
winning strategy. Every formula is equivalent to a formula in
\textsc{pnf} in classical logic, but it is not the case in $\ha$.  It
may be interesting to think of an extension of $\ha$ from viewpoint of
games which the formulas represent.  We
ask ourselves, ``For which set $\Gamma$ of $L_\ha$-formulas, which
extension $T$ of $\ha$ admits the \emph{prenex normal form theorem}
for $\Gamma$?'' We will syntactically study the question.

For the study, we use \emph{an arithmetical hierarchy of semi-classical principles}, introduced in \cite{akama04:_arith_hierar_of_laws_of}.  In the
hierarchy,  the law of excluded middle and the principle of double
negation elimination are relativized by various formula classes $\Gamma=\Sigma^0_k, \Pi^0_k,\ldots$
$(k\ge0)$. The hierarchy has following axiom schemes:
\begin{align*}(\Gamma\mbox{-}\mathbf{LEM})&\qquad\quad\ \, A \vee  \neg
 A\qquad&(\mbox{$A$ is any $\Gamma$-formula}).\\
(\Gamma\mbox{-}\mathbf{DNE})&\qquad \dneg A \to A\qquad&(\mbox{$A$ is any $\Gamma$-formula}).
\end{align*}
  Any set $X\subseteq\Nset$ in Kleene's arithmetical hierarchy is
identical to $\Nset\setminus(\Nset\setminus X)$. However, not every
formula $A$ is equivalent in $\ha$ to $\dneg A$. So we defined the
\emph{dual} $A^\perp$ of $A$ in a way similar to so-called
\emph{involutive negation of classical logic}. We show that $\ha\vdash
(A^\perp)^\perp \lequiv A$ for any  formula $A$ in \textsc{pnf}, and consider an axiom scheme
\begin{align*}(\Gamma\mbox{-}\mathbf{LEM}')\qquad A \vee  
 A^\perp \qquad(\mbox{$A$ is any $\Gamma$-formula}). 
\end{align*}
The axiom scheme $\slem{k}$ turns out to be equivalent in $\ha$ to
$\slemm{k}$. Motivated by $\Delta^0_k$-sets of Kleene's arithmetical
hierarchy, the hierarchy of semi-classical principles has the following
axiom scheme
\begin{align*}\dlem{k}\qquad (A \lequiv B) \to (A \vee \neg A)\qquad(A\in \Pi^0_{k}, B \in \Sigma^{0}_{k}).
\end{align*}
According to \cite{akama04:_arith_hierar_of_laws_of}, it is weaker than the variant
\begin{align*}\fpdlem{k}\qquad (A \lequiv B) \to (B \vee A^\perp)\qquad(A\in \Pi^0_{k}, B \in \Sigma^{0}_{k}).
\end{align*}

Among these axiom schemes appearing in the arithmetical hierarchy of
semi-classical principles, we answer, ``Which axiom scheme is stronger
than which axiom scheme?''

\begin{theorem} \alabel{thm:main}For any $k\ge0$,
\begin{eqnarray}
&&\slem{k}\ \mbox{proves}\ \plem{k}\ \mbox{in $\ha$}\enspace.\alabel{eq3}\\
&&\dne{k+1}\ \mbox{proves}\ \slem{k}\ \mbox{in $\ha$}\enspace. \alabel{eq4}\\
&&\slem{k}\ \mbox{intuitionistically proves}\ \dne{k}\enspace.\alabel{eq6}\\
&&\plem{k+1}\ \mbox{intuitionistically proves}\
 \slem{k}\enspace.\alabel{eq7}\\
&&\fpdlem{k}\ \mbox{is equivalent in $\ha$ to}\
 \sdne{k}\enspace.\alabel{eq8}\\
&&\sdne{k}\ \mbox{proves $\dlem{k}$ in $\ha$}\enspace.\alabel{eq9}
\end{eqnarray}
\end{theorem}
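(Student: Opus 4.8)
The plan is to prove the six claims in the order \eqref{eq6}, \eqref{eq7}, \eqref{eq3}, \eqref{eq4}, \eqref{eq8}, \eqref{eq9}, exploiting that the dual $(\cdot)^\perp$, the basic property $\ha\vdash A^\perp\to\neg A$, the equivalence $\slem{k}\lequiv\slemm{k}$, and the cumulativity of the classes are already available. Two of the claims are immediate. For \eqref{eq6}, given any $\Sigma^0_k$-formula $A$, from $A\vee\neg A$ and $\dneg A$ the disjunct $\neg A$ is impossible, so $A$ follows; this is just $\Gamma$-$\mathbf{LEM}\to\Gamma$-$\mathbf{DNE}$. For \eqref{eq7}, every $\Sigma^0_k$-formula $A$ is $\ha$-equivalent to the $\Pi^0_{k+1}$-formula $\forall n_0\,A$ (with $n_0$ a fresh dummy variable), so $\plem{k+1}$ applied to $\forall n_0\,A$ yields $A\vee\neg A$; thus \eqref{eq7} is really the inclusion $\Sigma^0_k\subseteq\Pi^0_{k+1}$. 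For \eqref{eq3} I would use $\slem{k}\lequiv\slemm{k}$: given $B\in\Pi^0_k$ its dual $B^\perp$ is $\Sigma^0_k$, so $\slemm{k}$ gives $B^\perp\vee(B^\perp)^\perp$, which is $B^\perp\vee B$ by $(B^\perp)^\perp\lequiv B$; since $\ha\vdash B^\perp\to\neg B$ this collapses to $\neg B\vee B$.

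The remaining claims all turn on the same device: rewriting a negation as a dual so that a disjunction or an excluded-middle instance lands in the right $\Sigma^0$ level. The recurring auxiliary fact is that, \emph{in the presence of full level-$(k-1)$ excluded middle}, one has $\ha\vdash\neg C\lequiv C^\perp$ for every $C\in\Sigma^0_{k-1}\cup\Pi^0_{k-1}$ (from $\slemm{k-1}$ for the $\Sigma$ case, and from $\neg C^\perp\lequiv C$ together with $\Sigma^0_{k-1}$-$\mathbf{DNE}$ for the $\Pi$ case); consequently $\neg A\lequiv A^\perp$ for $A\in\Sigma^0_k$ and $\neg A^\perp\lequiv A$ for $A\in\Pi^0_k$, by pushing the single leading quantifier through. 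I expect this importation of lower-level classical logic to be the main obstacle, because $\neg A$ is not itself prenex and only becomes $\Sigma^0$- or $\Pi^0$-reducible once the lower middle is on hand.

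With this in hand, \eqref{eq4} is proved by induction on $k$. From $\dne{k+1}$ one gets $\Sigma^0_k$-$\mathbf{DNE}$ by padding, hence $\slem{k-1}$ by the induction hypothesis, hence full level-$(k-1)$ excluded middle via \eqref{eq3}; so for $A=\exists x\,A_1(x)\in\Sigma^0_k$ (with $A_1\in\Pi^0_{k-1}$) we may replace $\neg A$ by $A^\perp$. I would then encode the instance $A\vee\neg A$ as the genuine $\Sigma^0_{k+1}$-formula $F:=\exists u\,[(u>0\to A_1(u-1))\wedge(u=0\to A^\perp)]$, verify $\ha\vdash F\to(A\vee\neg A)$ by deciding whether $u=0$, and verify $\ha\vdash(A\vee A^\perp)\to F$ (take $u$ a witness in the left case, $u=0$ in the right). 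Since $A\vee\neg A\lequiv A\vee A^\perp\lequiv F$ under the lower middle and $\ha\vdash\dneg(A\vee\neg A)$, we obtain $\dneg F$, whence $F$ by $\dne{k+1}$, and finally $A\vee\neg A$.

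Finally I would treat \eqref{eq8} and \eqref{eq9} together. For the direction of \eqref{eq8} saying $\sdne{k}$ proves $\fpdlem{k}$, and for \eqref{eq9}, I would use that $\sdne{k}$ already yields level-$(k-1)$ middle (by \eqref{eq4}) and $\Pi^0_k$-$\mathbf{DNE}$: given $A\in\Pi^0_k$, $B\in\Sigma^0_k$ with $A\lequiv B$, one has $\neg A^\perp\lequiv A$, so assuming $\neg B\wedge\neg A^\perp$ gives $A$ (from $\neg A^\perp$) and then $B$ (from $A\lequiv B$), a contradiction; hence $\dneg(B\vee A^\perp)$, and as $B\vee A^\perp$ is $\Sigma^0_k$, $\sdne{k}$ delivers $B\vee A^\perp$, which is $\fpdlem{k}$, while rewriting $A^\perp$ as $\neg A$ gives the $\dlem{k}$ conclusion $A\vee\neg A$. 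The converse direction of \eqref{eq8}, that $\fpdlem{k}$ proves $\sdne{k}$, is the neat one: for $C\in\Sigma^0_k$ assume $\dneg C$ and instantiate $\fpdlem{k}$ at $A:=C^\perp\in\Pi^0_k$ and $B:=\bot$ (a false $\Delta^0_0$-formula, a fortiori $\Sigma^0_k$); from $C^\perp\to\neg C$ and $\dneg C$ we get $\neg C^\perp$, i.e. $A\lequiv B$, so $\fpdlem{k}$ yields $B\vee A^\perp\lequiv\bot\vee C$, that is $C$.
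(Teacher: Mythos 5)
Your proposal is correct, and on the two substantive assertions, \eqref{eq4} and \eqref{eq8}, it takes a genuinely different route from the paper; the remaining parts (\eqref{eq3}, \eqref{eq6}, \eqref{eq7}, \eqref{eq9}) coincide with the paper's arguments, and your recurring auxiliary fact ($\neg C\lequiv \dual{C}$ for $C\in\Sigma^0_{k-1}\cup\Pi^0_{k-1}$ given level-$(k-1)$ middle) is legitimate --- it is exactly what the proof of Lemma~\ref{lem3.8} establishes via Fact~\ref{fac} and \eqref{2dec}. For \eqref{eq4} the paper first prenexes the $\slemm{k}$-instance into the interleaved $\Sigma^0_{k+1}$-formula \eqref{eq:se} by moving universal quantifiers over disjuncts whose decidability comes from the induction hypothesis (Claims~\ref{claim:I} and \ref{claim:II}), and then proves \eqref{eq:se} in $\ha+\dne{k+1}$ by taking its G\"odel--Gentzen translation (provable in $\ha$ outright) and stripping the inserted double negations innermost-first with instances of $\dne{j}$, $j\le k+1$ (Lemma~\ref{lem:b}). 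You instead code the excluded-middle instance as a single existential $F=\exists u\,[(u>0\to A_1(u-1))\wedge(u=0\to \dual{A})]$ over a decidable case split, note that $\dneg(A\vee\neg A)$ is an intuitionistic theorem, transport it across $A\vee\neg A\lequiv A\vee\dual{A}\lequiv F$ (the first equivalence is where the induction hypothesis enters, exactly as in the paper), and apply $\dne{k+1}$ once; this avoids the negative-translation machinery entirely, and because your coding replaces the disjunction by implications with $\Sigma^0_0$ antecedents, the prenexation of $F$ needs only Lemma~\ref{lem:mqdc}, Fact~\ref{lem:int} and Fact~\ref{fact:dupq} in plain $\ha$, with no classical help --- a shorter and more self-contained argument, whereas the paper's quantifier-shuffling apparatus is reused later for the prenex normal form theorem. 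Likewise, in the forward half of \eqref{eq8} you obtain $\dneg(B\vee\dual{A})$ directly from $\neg\dual{A}\lequiv A$ instead of passing through the paper's interleaved form \eqref{eq:6} and its dual \eqref{eq:7}; and in the converse half your instantiation $A:=\dual{C}$, $B:=$ a false $\Sigma^0_0$-formula (padded to $\Sigma^0_k$) replaces the paper's $G3$ sequent-calculus derivation of \eqref{eq:8} by the already-available Fact~\ref{fac}, which is cleaner. Two small glosses should be made explicit in a write-up: neither $F$ nor $B\vee\dual{A}$ is literally prenex, so you must state that $F$ is $\ha$-equivalent to a genuine $\Sigma^0_{k+1}$-formula and that $B\vee\dual{A}$ (a disjunction of two $\Sigma^0_k$-formulas) is $\ha$-equivalent to a genuine $\Sigma^0_k$-formula, both by the same decidable-case-split coding plus the quantifier movements just cited; with those sentences added, the proof is complete.
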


Let $T$ be a consistent extension of $\ha$. For a formula $A$ of $T$,
 let a formula $A'$ be obtained from $A$ by moving a quantifier of $A$
 over a subformula $D$ of $A$. If the subformula $D$ is \emph{decidable}
 in $T$~(i.e. $T$ proves $D\vee \neg D$), then the formulas $A$ and $A'$
 are equivalent in $T$. Based on this observation, by
 Theorem~\ref{thm:main}, we prove the following:
 
\begin{theorem}[Prenex Normal Form Theorem]\alabel{pnfthm}
  For every $L_\pra$-formula $A$ having at most $k$ quantifiers, we can
 find an $L_\pra$-formula $\hat A$
 in \textsc{pnf} which has $k$ quantifiers and is equivalent in  $\ha+\slem{k}$ to $A$.
\end{theorem}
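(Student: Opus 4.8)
The plan is to argue by induction on the structure of $A$, showing that each $L_\pra$-connective can be pulled outside all quantifiers at the cost of only the classical strength packaged in $\slem{k}$. Two preliminary facts drive everything. First, from \eqref{eq3} together with the hierarchy $\slem{j+1}\vdash\plem{j+1}\vdash\slem{j}$ (the first step is \eqref{eq3} at $j+1$, the second is \eqref{eq7} at $j$), the scheme $\slem{k}$ proves $A\vee\neg A$ for every $\Sigma^0_j$- and $\Pi^0_j$-formula with $j\le k$; hence every formula already in \textsc{pnf} with at most $k$ quantifiers falls in some $\Sigma^0_j$ or $\Pi^0_j$ with $j\le k$ and is therefore \emph{decidable} in $\ha+\slem{k}$. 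Second, I would isolate the \emph{duality lemma}: for a \textsc{pnf} formula $\psi$ with at most $k$ quantifiers, $\ha+\slem{k}\vdash \neg\psi\lequiv\psi^\perp$. The direction $\psi^\perp\to\neg\psi$ is provable already in $\ha$ by induction on the prenex prefix (at the quantifier-free matrix $\psi_0^\perp\lequiv\neg\psi_0$, while $\exists x\,\neg B\to\neg\forall x\,B$ and $\forall x\,\neg B\to\neg\exists x\,B$ are intuitionistic); the converse $\neg\psi\to\psi^\perp$ follows from the instance $\psi\vee\psi^\perp$, which is $\slemm{j}$ when $\psi\in\Sigma^0_j$ and, when $\psi\in\Pi^0_j$, is $\slemm{j}$ applied to $\psi^\perp\in\Sigma^0_j$ together with the involutivity $(\psi^\perp)^\perp\lequiv\psi$ quoted above. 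Here I use that $\slem{j}$ and $\slemm{j}$ are equivalent in $\ha$.

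Given these, I would run the induction. The atomic and quantifier cases are immediate: a quantifier-free $A$ is its own \textsc{pnf}, and $\forall x\,B$, $\exists x\,B$ are prenexed by prepending the quantifier to $\hat B$. For $A=B\wedge C$ I would take $\hat B,\hat C$ from the induction hypothesis and interleave their prefixes using only the intuitionistically valid shifts $(Qx\,D)\wedge E\lequiv Qx\,(D\wedge E)$ (variables renamed apart); the quantifier count is additive and stays $\le k$. For $A=B\vee C$ I would again pull the prefixes out one quantifier at a time: each $(\exists x\,D)\vee E\lequiv\exists x\,(D\vee E)$ is intuitionistic, while each $(\forall x\,D)\vee E\lequiv\forall x\,(D\vee E)$ needs only a case split on $E\vee\neg E$, and the other disjunct $E$ is a sub-\textsc{pnf} formula with $\le k$ quantifiers, hence decidable by the first preliminary fact. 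For $A=\neg B$ I would set $\hat A:=\hat B^\perp$, which is in \textsc{pnf} with the same number of quantifiers, and invoke the duality lemma to get $\neg B\lequiv\neg\hat B\lequiv\hat B^\perp$. Finally $A=B\to C$ reduces to the previous cases: since $B\lequiv\hat B$ and $\hat B$ is decidable, $\ha+\slem{k}$ proves $B\vee\neg B$ and hence $(B\to C)\lequiv(\neg B\vee C)$, and then $\neg B\lequiv\hat B^\perp$ by the duality lemma, so $A$ is equivalent to the disjunction $\hat B^\perp\vee\hat C$ of two \textsc{pnf} formulas, which is already handled. A short concluding step pads $\hat A$ up to exactly $k$ quantifiers by prepending vacuous quantifiers on fresh variables, an intuitionistic equivalence.

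The main obstacle is the duality lemma, since it is the only point at which genuinely classical reasoning is consumed, and the delicate part is the bookkeeping that keeps this reasoning inside $\slem{k}$ rather than something stronger. Concretely, I must check that every negation I eliminate and every universal quantifier I drag across a disjunction acts on a formula of \textsc{pnf} complexity at most $\Sigma^0_k$ or $\Pi^0_k$; because quantifier counts add under $\wedge,\vee,\to$ and a subformula of a formula with $\le k$ quantifiers again has $\le k$ quantifiers, this bound is never exceeded, so the instances of $A\vee\neg A$ and $A\vee A^\perp$ I call upon all lie at levels $j\le k$ and are supplied by $\slem{k}$. The remaining steps are routine intuitionistic quantifier manipulations, and the fact that $\slem{k}$ subsumes every $\slem{j}$ and $\plem{j}$ with $j\le k$ is exactly what lets a single instance of the scheme suffice for the whole prenexing.
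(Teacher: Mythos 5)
Your proof is correct, and it consumes exactly the same logical resources as the paper's: decidability in $\ha+\slem{k}$ of all $\Sigma^0_j$-/$\Pi^0_j$-formulas with $j\le k$ (obtained by chaining \eqref{eq3} and \eqref{eq7}), the quantifier shifts of Lemma~\ref{lem:mqdc} and Fact~\ref{lem:int}, and the dual operation via Lemma~\ref{lem3.8} and Fact~\ref{fac} to convert $\neg$ and $\to$ into disjunctions. But your decomposition is genuinely different from the paper's. You induct on the raw number of quantifier occurrences, prenex $\vee$ and $\wedge$ by interleaving the two already-prenexed prefixes one quantifier at a time across a decidable disjunct, and isolate a reusable duality lemma $\neg\psi\lequiv\psi^\perp$ for \textsc{pnf} $\psi$. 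The paper (Theorem~\ref{thm:ueps} and Corollary~\ref{cor:1}) instead grades formulas by \emph{degree}---the number of nested quantifiers with alternating signs---via the classes $E_k$, $U_k$, $P_{k+1}$, and in the binary-connective cases peels one quantifier from each side and re-invokes the induction hypothesis on the merged matrix; your duality lemma appears there only inlined, in Subcase~1.2. What the paper's heavier bookkeeping buys is precisely the refinement advertised right after the theorem statement: $k$ can be taken to be the essential number of alternations of nested quantifiers (and the number of quantifier occurrences never increases), which your plain concatenation of prefixes cannot deliver, since gluing an $m$-quantifier prefix in front of an $n$-quantifier prefix may create up to $m+n$ alternations; what your version buys is a shorter, self-contained proof of the literal statement. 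One small patch you do need: a \textsc{pnf} formula with at most $k$ quantifiers is not literally a $\Sigma^0_j$- or $\Pi^0_j$-formula in the paper's sense (those require strictly alternating prefixes), so before invoking decidability or the instance $\psi\vee\psi^\perp$ of $\slemm{j}$ you must first contract adjacent like quantifiers by the pairing trick, Fact~\ref{fact:dupq}; this is routine and is exactly how the paper itself uses that fact.
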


Actually, for $k$, we can take an ``essential'' number of alternation of
nested quantifiers. See Subsection~\ref{subsec:pnfthm} for detail.

\begin{figure}[t]
\begin{tabular}{ll}
\begin{minipage}{0.5\textwidth}
\unitlength=0.75mm
\begin{picture}(60,96)
\put(-3  ,20  ){\makebox(20,5){$\pdne2$}}
\put(-3  ,60  ){\makebox(20,5){$\pdne3$}}
\put(-3  ,25  ){\makebox(20,5){$\mathsf{fp}\dlem1$}}
\put(-3  ,65  ){\makebox(20,5){$\mathsf{fp}\dlem2$}}

\multiput(23  ,27  )( 0,40){2}{\vector( 1,0){2}}   
\multiput(23  ,27  )( 0,40){2}{\vector( -1,0){4}}  

\multiput(23  ,22  )( 0,40){2}{\vector( 1,0){2}}   
\multiput(23  ,22  )( 0,40){2}{\vector( -1,0){4}}  

\put(25  ,25  ){\makebox(20,5){$\sdne1$}}
\put(50  ,30  ){\makebox(20,5){$\plem1$}}
\put(25  ,65  ){\makebox(20,5){$\sdne2$}}
\put(50  ,70  ){\makebox(20,5){$\plem2$}}
\put(47.5, 2.5){\makebox( 0,0){$\slem0=\ha$}}
\put(47.5,12.5){\makebox( 0,0){$\dlem1$}}
\put(47.5,42.5){\makebox( 0,0){$\slem1$}}
\put(0,5.5){\line(1,0){90}}
\put(0,45){\line(1,0){90}}
\put(47.5,52.5){\makebox( 0,0){$\dlem2$}}
\put(0,85){\line(1,0){90}}
\put(47.5,82.5){\makebox( 0,0){$\slem2$}}
\put(50  ,20  ){\makebox(20,5){$\sllpo1$}}
\put(77 ,20  ){\makebox(20,5){$\bsdne2$}}
\put(82 ,15  ){\makebox(20,5){$\pllpo1$}}
\put(50  ,60  ){\makebox(20,5){$\sllpo2$}}
\put(77 ,60  ){\makebox(20,5){$\bsdne3$}}
\put(82 ,55  ){\makebox(20,5){$\pllpo2$}}
\multiput(47.5,87  )(0,1.5){6}{\makebox( 0,0){$\cdot$}}
\put(47.5, 97  ){\makebox( 0,0){$\pa$}}
\multiput(40  ,25  )( 0,40){2}{\vector( 1,-2){5}}   

\multiput(55  ,20  )( 0,40){2}{\vector(-1,-1){5}}   

\multiput(45  ,40  )( 0,40){2}{\vector(-1,-2){5}}   

\multiput(50  ,40  )( 0,40){2}{\vector( 1,-1){5}}   

\multiput(47.5,10  )( 0,40){2}{\vector( 0,-1){4}}

\put(55  ,30  ){\vector( 0,-1){5}}                

\put(55  ,70  ){\vector( 0,-1){5}}                

\multiput(72  ,22.5)( 0,40){2}{\vector( 1, 0){4}} 
\multiput(75 ,22.5)( 0,40){2}{\vector(-1, 0){4}}  
\multiput(72  ,17.5)( 0,40){2}{\vector( 1, 0){4}} 
\multiput(75 ,17.5)( 0,40){2}{\vector(-1, 0){4}} 
\end{picture}
\end{minipage} &

\begin{minipage}{0.5\textwidth}
\Large \begin{align*}
\xymatrix{
\vdots & \plim^\omega \A\\ 
\plim^3 \A \ar[u]^{\iota_{\plim^3 \A}} \ar[ru]^\vdots\\
\plim^2 \A \ar[u]^{\iota_{\plim^2 \A}} \ar[ruu]\\
\plim   \A \ar[u]^{\iota_{\plim \A}} \ar[ruuu]\\
        \A \ar[u]^{\iota_\A} \ar[ruuuu]}
\end{align*}
\end{minipage}
\end{tabular}
\alabel{thehierarchy}
\caption{The left is the arithmetical hierarchy of semi-classical
 principles. The one-way
 arrows means implication which is not reversible. The non-reversibility, the 
the axiom schemes
principle $\sllpo{k}$, $\bsdne{k+1}$ and $\pllpo{k}$  are not discussed in
 this paper, but in 
 \cite{akama04:_arith_hierar_of_laws_of}. 
The right
diagram consisting of \textsc{pca}s and 
 homomorphisms is a 
 \emph{colimit diagram}, in the category of
 \textsc{pca}s and homomorphisms between them. The vertical arrows are
 canonical injections~(see
 Section~\ref{sec:limitingPCA} for detail)
\alabel{Bhierarchy}}
\end{figure}

\subsection{Iterated Limiting PCA and Realizability Interpretations}

\cite{MR2030297} introduced a limit operation $\plim(\bullet)$ for partial
combinatory algebras~(\textsc{pca}s for short) such that  from any \textsc{pca}
$\A$, the limit operation $\plim(\bullet)$
 builds  hierarchies $\{\plim^\alpha \A\}_{\alpha=0,1,\ldots,\omega}$ of
 \textsc{pca}s satisfying Figure~\ref{Bhierarchy}~(right).
 The limit operation corresponds to the jump operation of
the arithmetical hierarchies, as in Shoenfield's limit lemma~(see
\cite{MR982269} for instance). The introduction of the limit operation
aimed to represent approximation algorithms needed in proof
animation~\citep{Hayashi01:_towar_animat_proof}. Hayashi proposed proof
animation in order to make interactive formal proof development
easier.

In this paper, we provide a realizability interpretation of  $\pa$ by a \textsc{pca}
$\plim^\omega \A$ for every \textsc{pca} $\A$. 

\begin{theorem}[Iterated Limiting Realizability Interpretation]\alabel{thm:realPA}For any \textsc{pca} $\A$ and for any nonnegative integer
 $k$, the system $\ha+\sdne{k+1}$ is sound by the realizability
 interpretation for the \textsc{pca} $\plim^k(\A)$. 
 $\pa$ is sound by the realizability interpretation for the \textsc{pca}
 $\plim^\omega(\A)$.
\end{theorem}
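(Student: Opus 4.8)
The plan is to split the statement into its two claims: the finite-stage soundness of $\ha+\sdne{k+1}$ over $\plim^k(\A)$, which carries all the computational content, and the soundness of $\pa$ over the colimit $\plim^\omega(\A)$, which is then an essentially formal consequence. First I would fix Kleene's realizability $\realize{e}{A}$ over an arbitrary \textsc{pca}, interpreting the arithmetical vocabulary through the numerals and the representability of the primitive recursive functions, and recall the standard fact that $\ha$ is sound under this interpretation over any \textsc{pca} that represents the primitive recursive functions. Since each $\plim^k(\A)$ and $\plim^\omega(\A)$ represents (far more than) the primitive recursive functions, $\ha$ is sound over all of them, and the whole theorem reduces to realizing the extra schemes $\sdne{k+1}$.

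For the finite stages I would argue by induction on $k$. The base case $k=0$ asks that $\sdne{1}$ --- which is Markov's principle --- be realized over $\A$ itself. Here the realizer ignores its premise and runs an unbounded minimization searching for a witness $n$ to the $\Sigma^0_1$-formula $\exists n\,P[n]$; because any \textsc{pca} represents all partial recursive functions, this search is carried out by a combinator of $\A$, and it converges precisely when $\exists n\,P[n]$ is true. A realizer of the premise $\neg\neg\exists n\,P[n]$ forces $\exists n\,P[n]$ to be true in the standard model (for a decidable matrix $P$, realizability of the double negation coincides with truth), so the search halts and returns a realizer of the conclusion. For the inductive step I would isolate an \emph{increment lemma}: for any \textsc{pca} $\B$ and any $j\ge1$, if $\B$ realizes $\sdne{j}$ then $\plim\B$ realizes $\sdne{j+1}$. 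Applying it to $\B=\plim^k(\A)$ with $j=k+1$ yields $\sdne{k+2}$ over $\plim^{k+1}(\A)$, completing the induction.

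The hard part is the increment lemma, and I expect it to be the main obstacle. Its engine is the defining property of the limit operation \citep{MR2030297}: the functions representable in $\plim\B$ are exactly the pointwise limits of $\B$-representable functions. I would use this to show that realizability over $\plim\B$ is a form of ``limiting realizability'' over $\B$ --- a realizer being a $\B$-indexed sequence of candidate realizers that stabilizes --- and that passing to the limit raises by exactly one the level of the arithmetical hierarchy at which witnesses can be found, via the (iterated) Shoenfield limit lemma. The delicate points are the clauses for implication and negation under the limit, and the need to return a genuine realizer of the $\Pi^0_j$-matrix of a $\Sigma^0_{j+1}$-formula --- not merely to decide its truth --- uniformly from a stabilized limiting computation; together with checking that the limiting witness search still converges under the double-negation hypothesis, exactly as in the base case but now relativized to the limit oracle.

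Finally, for $\pa$ over $\plim^\omega(\A)$ I would argue through the colimit. The canonical injections $\iota$ of Figure~\ref{Bhierarchy} are \textsc{pca} homomorphisms, so they transport realizers, and since $\plim^\omega(\A)$ is the colimit every element arises from some finite stage; hence $\plim^\omega(\A)$ realizes $\sdne{n}$ for every $n$, because each is already realized at stage $\plim^{\,n-1}(\A)$. By Theorem~\ref{thm:main} --- specifically \eqref{eq4} and \eqref{eq3} --- this gives $\slem{n}$ and $\plem{n}$ for all $n$; and with the Prenex Normal Form Theorem~\ref{pnfthm}, which lets $\ha+\slem{k}$ reduce any $L_\pra$-formula with at most $k$ quantifiers to a prenex $\Sigma^0_k$- or $\Pi^0_k$-formula, we obtain $A\vee\neg A$ for every $L_\ha$-formula $A$. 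Thus $\ha+\{\sdne{n}\}_{n}$ already proves every theorem of $\pa$, and soundness of realizability under intuitionistic deduction then shows $\pa$ is sound over $\plim^\omega(\A)$.
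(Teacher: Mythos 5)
Your overall frame --- Kleene realizability over a \textsc{pca}, soundness of $\ha$ reducing everything to the DNE schemes, one limit per quantifier alternation, Markov's principle as the base case, and the reduction of $\pa$ to $\ha+\{\sdne{n}\}_n$ via Theorems~\ref{thm:main} and~\ref{pnfthm} --- is sound, but both load-bearing steps have genuine gaps. The central one is the increment lemma. Beyond being left unproved (you flag it yourself as the main obstacle), its hypothesis is too weak to carry the induction. Over any \textsc{pca}, the clauses of Definition~\ref{eonrealizability} give that $\dneg C$ is realized if and only if $C$ is realized: if some $y_0$ realizes $C$, then every defined element realizes $\dneg C$; if nothing realizes $C$, then every defined element realizes $\neg C$, so nothing realizes $\dneg C$. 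Hence a realizer of $\sdne{j}$ over $\B$ is merely a uniform map from realizers to realizers; it provides no bridge from the \emph{truth} of a $\Sigma^0_j$- or $\Pi^0_j$-statement to a realizer of it. But that bridge is exactly what your inductive step needs: from a $\plim\B$-realizer of $\dneg\exists m_1\, B[m_1]$ you obtain (via the analogue of Lemma~\ref{lem:realpnftrue}) only the \emph{truth} of $\exists m_1\, B[m_1]$, and you must then produce, uniformly in the parameters, both a witness and a genuine realizer of the true $\Pi^0_j$-matrix; the hypothesis ``$\B$ realizes $\sdne{j}$'' yields neither. This is why the paper (Theorem~\ref{thm:eonrealize}) never inducts on realizability across \textsc{pca} levels. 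Instead it uses a \emph{representability} invariant: the minimum winning moves (Skolem functions) of a true prenex formula are iterated limits of $\A$-representable functions (Definition~\ref{def:propmove}, Claims~\ref{claim:represent} and~\ref{claim:moverepr}, via Shoenfield's limit lemma), hence representable in $\plim^k(\A)$, and these assemble directly into a realizer. To repair your induction you would have to strengthen the hypothesis from ``$\B$ realizes $\sdne{j}$'' to ``realizers of true prenex formulas with at most $j$ quantifiers are uniformly representable in $\B$,'' at which point you have essentially reproduced the paper's argument.

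The colimit step has a second gap: canonical injections are \textsc{pca} homomorphisms, but homomorphisms do not transport Kleene realizers. The clauses for $\to$, $\neg$ and $\forall$ quantify over \emph{all} elements of the ambient \textsc{pca}, so $\iota(e)$ would have to handle realizers of the premise living in $\plim^\omega(\A)$ that are not in the image of $\iota$, and nothing in the definition of a homomorphism guarantees this. The correct route --- and in effect the paper's --- is to re-run the soundness proof directly over $\plim^\omega(\A)$: since $\Repr(\plim^\omega(\A))=\bigcup_n\lim^n(\Repr(\A))$, every Skolem function needed for every instance of every $\sdne{n}$ is representable there, so the same construction realizes all the schemes simultaneously, and your final derivation of $\pa$ from $\ha+\{\sdne{n}\}_n$ then finishes the proof.
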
 
Let us call realizability
interpretation by a \textsc{pca} $\plim^\alpha\A$ an \emph{iterated
limiting realizability interpretation} $(\alpha=0,1,2,\ldots,\omega)$.
The feature of our
realizability interpretation of $\pa$ are:
\begin{itemize}\item
if non-constructive objects are
allowed to exist by the double negation elimination axioms, the
realization of the non-constructive objects requires 
the jump
of mathematical intuition. The jump is achieved by the limit.

\item Our realizability interpretation of $\pa$ is simpler than those by
\cite{berardi98computational} and \cite{avigad00}. They embedded
classical logic to intuitionistic logic by the G\"odel-Gentzen's
negative translation~(see Section~81 of \cite{MR0051790} for example) or
the Friedman-Dragalin translation, and then carried out the recursive
realizability interpretation. However, they needed a special observation
in interpreting the translation results of logical principles.
\cite{MR2121494} developed a theory for ``classical logic as limit.''
\end{itemize}

\subsection{Two Consequences of Our Prenex Normal Form Theorem and Our Iterated Limiting
Realizability Interpretation of $\pa$} We derive a result for
\emph{independence-of-premise schemes}~(see Section~1.11.6 of
\cite{MR48:3699}), and that for $n$-consistent extension of $\ha$.

\begin{definition}[Independence-of-premise scheme]  Let $\Gamma$ be a set of $L_\pra$-formulas.
$(\Gamma\mbox{-}\textbf{IP})$ is an axiom scheme
\begin{align*}
(A\to \exists m.\, B)\to \exists m.\, (A\to B)\end{align*}
where $m$ does not
 occur free in $A$, $A$ is any in $\Gamma$, and $B$ is any $L_\pra$-formula.
\end{definition}

Let an $F_n$-formula be any  $L_\pra$-formulas having at most $n$ quantifiers.

\begin{theorem}[Non-derivability between $\fip{k+1}$ and $\sdne{k+1}$]\alabel{thm:fip} $\ha +\sdne{k + 1} +
\fip{k + 1}$ does not admit a realizability interpretation by the
 \textsc{pca} $\plim^k(\Nset)$, where $\Nset$ is the \textsc{pca} of all
 natural numbers such that the partial application operation $\{n\}(m)$
 is the application of the unary partial recursive function of G\"odel
 number $n$ applied to $m$.
 Hence $\sdne{k + 1}\not \vdash_\ha \fip{k + 1}$ and $ \fip{k + 1}\not \vdash_\ha\sdne{k + 1}$.
\end{theorem}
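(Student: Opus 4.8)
The plan is to exhibit, for each $k$, a single instance of $\fip{k+1}$ whose consequent is provable in $\ha+\sdne{k+1}+\fip{k+1}$ yet has no realizer over $\plim^k(\Nset)$; the two non-derivabilities then follow by contrasting this with the soundness of $\ha+\sdne{k+1}$ from Theorem~\ref{thm:realPA} and, for the remaining direction, with a second interpretation that validates $\fip{k+1}$ but refutes $\sdne{k+1}$.

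First I would fix a $\Sigma^0_{k+1}$-complete set $S$ and write $S=\{n:\exists m\,D(n,m)\}$ with $D$ a $\Pi^0_k$-formula; for $k=0$ take $D(n,m)=T(n,n,m)$ and $S$ the halting problem. Put $A:=\neg\neg\exists m\,D(n,m)$, which is an $F_{k+1}$-formula, and $B:=D(n,m)$. The corresponding instance of $\fip{k+1}$ is $(A\to\exists m\,B)\to\exists m\,(A\to B)$, and its premise $A\to\exists m\,B$ is literally $\neg\neg\exists m\,D(n,m)\to\exists m\,D(n,m)$, i.e. an instance of $\sdne{k+1}$ for the $\Sigma^0_{k+1}$-formula $\exists m\,D(n,m)$. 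Hence $\ha+\sdne{k+1}+\fip{k+1}$ proves the consequent $G(n):=\exists m\,(\neg\neg\exists m'\,D(n,m')\to D(n,m))$.

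The heart of the argument is to show $G(n)$ has no realizer over $\plim^k(\Nset)$. The total representable functions of $\plim^k(\Nset)$ are exactly the total $k$-iterated limiting recursive functions, i.e. by the iterated Shoenfield limit lemma the total functions recursive in $\emptyset^{(k)}$, that is the total $\Delta^0_{k+1}$-functions. Suppose $e$ realized $G(n)$ uniformly in the free variable $n$. Realizing an existential forces $e\cdot\num n$ to converge for every $n$ and to return a pair whose first component $m_n$ is a genuine witness; thus $n\mapsto m_n$ is a total $\Delta^0_{k+1}$-function $f$. For $n\in S$ the formula $\exists m'\,D(n,m')$ is true, hence realizable, so $\neg\neg\exists m'\,D(n,m')$ is realized by any element (as $\neg\exists m'\,D(n,m')$ is then unrealizable); feeding such a realizer to the second component forces $D(n,f(n))$ to be realized, hence true, either because $D$ is decidable (the case $k=0$) or by the adequacy property accompanying Theorem~\ref{thm:realPA} that realizability implies truth for $\Pi^0_k$-formulas over $\plim^k(\Nset)$. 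Therefore $n\in S\iff D(n,f(n))$, and since $f$ is recursive in $\emptyset^{(k)}$ and $D\in\Pi^0_k\subseteq\Delta^0_{k+1}$, the set $S$ is itself $\Delta^0_{k+1}$, contradicting $\Sigma^0_{k+1}$-completeness by strictness of the arithmetical hierarchy. I expect this realizability computation to be the main obstacle: the decisive point, and precisely what separates $\fip{k+1}$ from $\sdne{k+1}$, is that the outer $\exists m$ in $G(n)$ forces the selector $f$ to be \emph{total}, whereas a realizer of the bare $\sdne{k+1}$-instance need only produce a witness when actually fed a realizer of the antecedent, i.e. only on $S$, giving a merely partial selector consistent with $S\in\Sigma^0_{k+1}$.

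Granting this, the main statement is immediate, since $\ha+\sdne{k+1}+\fip{k+1}$ proves the unrealizable $G(n)$. For $\sdne{k+1}\not\vdash_\ha\fip{k+1}$: were it derivable, $\ha+\sdne{k+1}$ would prove $G(n)$, yet $\ha+\sdne{k+1}$ is sound over $\plim^k(\Nset)$ by Theorem~\ref{thm:realPA} with $\A=\Nset$, so $G(n)$ would be realizable, a contradiction. For $\fip{k+1}\not\vdash_\ha\sdne{k+1}$ realizability over $\plim^k(\Nset)$ does not suffice, since one checks (via the variant with $A:=\exists m\,D(n,m)$, whose premise is already provable in $\ha$) that $\ha+\fip{k+1}$ is itself not sound over $\plim^k(\Nset)$; so I would instead invoke a second interpretation, Kreisel's modified realizability, which validates every instance of independence of premise but refutes Markov's principle. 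Because $\sdne{k+1}\vdash_\ha\sdne{1}$ (pad a $\Sigma^0_1$-formula with dummy quantifiers) and $\sdne{1}$ is Markov's principle, a derivation $\fip{k+1}\vdash_\ha\sdne{k+1}$ would yield $\fip{k+1}\vdash_\ha\sdne{1}$, contradicting modified realizability and completing the proof.
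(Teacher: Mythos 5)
Your unrealizability argument and your first non-derivability are, in substance, the paper's own proof: the paper uses exactly the instance of $\fip{k+1}$ whose antecedent is $\neg\neg\exists m\,D(n,m)$ with $D$ a $\Pi^0_k$-formula, extracts from a hypothetical realizer $e$ over $\plim^k(\Nset)$ the total $\emptyset^{(k)}$-recursive selector $f(n)=\lim_{t_1}\cdots\lim_{t_k}\{e\}(t_1,\ldots,t_k,n)$, observes $n\in S \iff D(n,f(n))$, and contradicts $\Sigma^0_{k+1}$-completeness (the paper records the graph of $f$ as $\Pi^0_{k+1}$ where you say $\Delta^0_{k+1}$; this is immaterial), and it derives $\sdne{k+1}\not\vdash_\ha\fip{k+1}$ by the same contrast with Theorem~\ref{thm:realPA}. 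Up to that point your proposal is correct and matches the paper.

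The last paragraph, however, contains a genuine gap. Kreisel's modified realizability does not validate ``every instance of independence of premise''; it validates IP only for $\exists$-free (negative) antecedents, whereas $\fip{k+1}$ as defined in this paper allows any antecedent with at most $k+1$ quantifiers, in particular $A=\exists m'\,D(n,m')$. Your own variant instance --- the one you rightly use to show that $\ha+\fip{k+1}$ is already unsound over $\plim^k(\Nset)$ --- defeats modified realizability by the very same computation: a modified realizer of $\exists m\,(\exists m'\,D(n,m')\to D(n,m))$ yields a recursive selector $f$ with $n\in S\iff D(n,f(n))$, which is impossible. So the proposed second interpretation does not validate the theory whose consistency with $\neg\sdne{k+1}$ you need. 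Moreover, the failure is not repairable by choosing a cleverer interpretation: that same instance makes the second assertion refutable. Over $\ha$, fixing $m_0$ with $\exists m'\,C(m')\to C(m_0)$ and splitting on the decidable $C(m_0)$ gives $\ha+\fip{1}\vdash\slem{1}$; inductively, since $\slem{j}\vdash_\ha\plem{j}$ by Theorem~\ref{thm:main}~\eqref{eq3} makes $\Pi^0_j$-formulas decidable, the same case split applied to $D\in\Pi^0_j$ gives $\ha+\fip{j+1}\vdash\slem{j+1}$, and then Theorem~\ref{thm:main}~\eqref{eq6} yields $\fip{k+1}\vdash_\ha\sdne{k+1}$. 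So you were right to notice that realizability over $\plim^k(\Nset)$ cannot deliver the second non-derivability (the paper's ``Hence'' genuinely supports only the first claim), but the correct conclusion is that this half of the statement cannot be established by any sound interpretation, not that modified realizability will save it.
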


No reasonable subsystem $T$ of $\ha$ seems to admit prenex normal form
theorem, because for all $k$, $T$ does not prove $\fip{k}$.

\bigskip
The next consequence of  our prenex normal form theorem~(Theorem~\ref{pnfthm}) and our iterated limiting
realizability interpretation~(Theorem~\ref{thm:realPA}) of $\pa$
is about
 ``$\pa$ is unbounded extension of $\ha$.''  

Before \cite{akama04:_arith_hierar_of_laws_of}, strict infinite
hierarchies of formal arithmetics $\ha\subsetneq T_1 \subsetneq T_2
\subsetneq\cdots \subsetneq \pa$ was provided in a proof of a theorem
``any set $\Gamma$ of $L_\pra$-sentences with bounded
quantifier-complexity does not axiomatize $\pa$ over $\ha$.''  The proof
was sketched in Section~3.2.32 of \cite{MR48:3699}, and was based on
C.~Smory{\'n}ski's idea given in his unpublished note ``\emph{Peano's
arithmetic is unbounded extension of Heyting's arithmetic}.''  
\cite{MR48:3699}
used a \emph{realizability interpretation}~(\cite{MR0015346}) but the
realizers are G\"odel numbers of partial functions \emph{recursive in a
complete $\Pi^0_k$-set} of the \emph{Kleene's arithmetical hierarchies}.

We say an arithmetic $T$ is \emph{$n$-consistent}, provided every
$\Sigma^0_n$-sentence provable in $T$ is true in the standard model
$\omega$. Note that $\ha$ is $n$-consistent for each positive integer $n$.

\begin{theorem}[PA as bounded extension of HA]\alabel{thm:smotroaka}  Let $n\ge 2$ be a natural number, and
$\Gamma$ be a set of $L_\pra$-sentences containing at most $n$ quantifiers.
If $\ha+\Gamma$ is $n$-consistent, then $\ha+\Gamma$  does not prove the
 axiom scheme $\slem{n+1}$.
\end{theorem}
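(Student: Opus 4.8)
The plan is to argue by contradiction: assume $\ha+\Gamma\vdash\slem{n+1}$ and show that $\ha+\Gamma$ cannot be $n$-consistent. The vehicle is the iterated limiting realizability interpretation of Theorem~\ref{thm:realPA} over the \textsc{pca} $\plim^n(\Nset)$. By that theorem (taking $k=n$ and $\A=\Nset$) this interpretation is sound for $\ha+\sdne{n+1}$, and since $\sdne{n+1}$ proves $\slem{n}$ in $\ha$ by \eqref{eq4}, the interpretation validates all of $\ha+\slem{n}$. I would also use an \emph{adequacy} property, to be extracted from the construction behind Theorem~\ref{thm:realPA}: over $\plim^n(\Nset)$ a closed $\Sigma^0_{n+1}$-sentence is realized if and only if it is true, and every true $\Pi^0_n$-sentence is realized. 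Intuitively this holds because the witnesses and decisions demanded by such sentences are computed by $n$-fold iterated limits, i.e. recursively in $\emptyset^{(n)}$.

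First I would show the interpretation validates every axiom of $\ha+\Gamma$. Each $\gamma\in\Gamma$ has at most $n$ quantifiers, so by the Prenex Normal Form Theorem~\ref{pnfthm} there is a \textsc{pnf} sentence $\hat\gamma$ with $n$ quantifiers, hence of class $\Sigma^0_n$ or $\Pi^0_n$, with $\ha+\slem{n}\vdash\gamma\lequiv\hat\gamma$. Crucially this equivalence is a theorem of $\ha+\slem{n}$ alone and therefore is validated by the model, with no appeal to $\Gamma$. It remains to realize $\hat\gamma$. From the assumption $\ha+\Gamma\vdash\slem{n+1}$ we get $\ha+\Gamma\vdash\slem{n}$ and hence $\ha+\Gamma\vdash\hat\gamma$; then $n$-consistency forces $\hat\gamma$ to be true. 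For $\hat\gamma\in\Sigma^0_n$ this is immediate; for $\hat\gamma\in\Pi^0_n$, a provable false $\Pi^0_n$-sentence would, after instantiating its outermost universal quantifier by a witness of its true negation, yield a provable false $\Sigma^0_{n-1}$-sentence (this uses $n\ge2$ so that $\Sigma^0_{n-1}$ is meaningful), contradicting the $\Sigma^0_n$-soundness given by $n$-consistency. By adequacy the true $\hat\gamma$ is realized, and combined with the realized equivalence $\gamma\lequiv\hat\gamma$ this realizes $\gamma$. Thus the interpretation is sound for the whole of $\ha+\Gamma$.

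Next I would exhibit a single instance of $\slem{n+1}$ that is \emph{not} realizable over $\plim^n(\Nset)$. Fix a $\Sigma^0_{n+1}$-formula $A(x)$ defining a $\Sigma^0_{n+1}$-complete set. A realizer of the closed formula $\forall x\,(A(x)\vee\neg A(x))$ supplies a $\plim^n(\Nset)$-representable total function $d$ such that $d(x)$ names the realized disjunct; by the adequacy property (realized $\Sigma^0_{n+1}$ implies true, and true $\Sigma^0_{n+1}$ implies realized, so not realized implies false), $d$ is the characteristic function of $\{x:A(x)\}$. But the $\plim^n(\Nset)$-representable functions are exactly the $n$-fold limiting recursive functions, that is the $\Delta^0_{n+1}$ functions, by the iterated Shoenfield limit lemma; so $\{x:A(x)\}$ would be $\Delta^0_{n+1}$, contradicting the arithmetical hierarchy theorem. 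Hence $\forall x\,(A(x)\vee\neg A(x))$ is not realized. Since it is an instance of $\slem{n+1}$ and $\ha+\Gamma\vdash\slem{n+1}$, soundness of the interpretation for $\ha+\Gamma$ would force it to be realized, which is the desired contradiction.

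The hard part will be the adequacy lemma together with the exact computational calibration of $\plim^n(\Nset)$: one must verify both that its representable functions coincide with the $\Delta^0_{n+1}$ functions and that realizability agrees with truth on $\Sigma^0_{n+1}$- and $\Pi^0_n$-sentences, and these rest on the iterated limit lemma and on carefully tracking realizers through the inductive definition of $\plim^n$. A subtlety worth stressing is that each \emph{closed} instance $A(a)\vee\neg A(a)$ is individually realizable, so the obstruction lives entirely in the demand for a \emph{uniform} (representable) decision function $d$; this is exactly why the universally closed instance fails. This also explains why the bound is $\slem{n+1}$ rather than $\slem{n}$: the argument needs the model to validate $\slem{n}$ in order to run the prenex reduction of $\Gamma$, and $\plim^n(\Nset)$ does realize $\slem{n}$ while refuting $\slem{n+1}$, so the gap between what $\Gamma$ may prove and what the model refutes is precisely one level of the hierarchy.
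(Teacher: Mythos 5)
Your proposal is correct and follows essentially the same route as the paper: reduce each sentence of $\Gamma$ to prenex form via Theorem~\ref{pnfthm}, use $n$-consistency to obtain truth, realize the true prenex sentences over $\plim^n(\Nset)$ (your ``adequacy'' lemma is exactly the paper's uniformization/Skolem-function step), and derive the contradiction from the fact that $\plim^n(\Nset)$ realizes $\slem{n}$ but not $\slem{n+1}$. The differences are only expository: you spell out the non-realizability of $\slem{n+1}$ (complete $\Sigma^0_{n+1}$-set, representable decision function, hierarchy theorem) and the $\Pi^0_n$ truth case under $n$-consistency, both of which the paper asserts without detailed proof.
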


The background and a possible research direction of the theorem is given in
Section~\ref{sec:classical}.  The rest of the paper is organized as
follows.  In Section~\ref{sec:hierarchy}, the hierarchies of logical
systems between $\ha$ and $\pa$ are introduced to discuss the prenex
normal form theorem~(Theorem~\ref{pnfthm}). In Section~\ref{sec:limitingPCA}, we introduce 
iterated autonomous limiting \textsc{pca}s, In
Section~\ref{sec:classical}, by using the such \textsc{pca}s, we
introduce and study the iterated limiting realizability interpretation
of arithmetics between $\ha$ and $\pa$.  In Subsection~\ref{subsec:proofs}, we verify
Theorem~\ref{thm:fip} and Theorem~\ref{thm:smotroaka}.

\section{Hierarchy of Semi-classical Principles}
\alabel{sec:hierarchy}

When we move quantifiers of a formula $A$ outside the scope of
propositional connectives, we ask ourselves when the resulting formula
$A'$ is equivalent in $\ha$ to the formula $A$.

\begin{lemma}\alabel{lem:mqdc}If a variable $n$ does not occur in a formula
 $A$, then intuitionistic predicate logic
  $\IQC$ proves:
(1) \newcounter{firstmqdcassert}\setcounter{firstmqdcassert}{1}$A\vee\forall n B\to   \forall n(A\vee B)$;
(2)\newcounter{secondmqdcassert}\setcounter{secondmqdcassert}{2}
	$\exists n(A\circ B)\lequiv A\circ\exists n B$ for
	$\circ=\vee,\wedge$; and
(3)\newcounter{thirdmqdcassert}\setcounter{thirdmqdcassert}{3} $\forall n(A\wedge B)\lequiv A\wedge\forall n B$.
\end{lemma}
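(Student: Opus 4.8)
The plan is to prove each of the three equivalences by exhibiting explicit derivations in $\IQC$, treating the two directions of each biconditional separately. Since these are all instances of the standard quantifier-shifting (``miniscoping'') laws restricted to the case where $n$ does not occur free in $A$, the proofs are routine natural-deduction arguments, and the work is to identify which direction genuinely requires the hypothesis on $n$ and which holds unconditionally.

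First I would dispatch the two-directional assertions (2) and (3). For assertion (3), the left-to-right direction instantiates $\forall n(A\wedge B)$ at an arbitrary $n$ to obtain $A\wedge B$, hence $A$ and $B$; from $B$ under a fresh $n$ one reintroduces $\forall n B$, and $A$ (not depending on $n$) is carried along, giving $A\wedge\forall nB$. The right-to-left direction is symmetric: from $A\wedge\forall nB$ one has $A$ and, for arbitrary $n$, $B$, so $A\wedge B$ and then $\forall n(A\wedge B)$. For assertion (2) with $\circ=\wedge$, the direction $\exists n(A\wedge B)\to A\wedge\exists nB$ uses $\exists$-elimination: assuming $A\wedge B$ for a witness $n$, one extracts $A$ and $\exists nB$, and since $A$ is $n$-free the $\exists$-elimination is legitimate; conversely, from $A$ and a witness $n$ for $B$ one forms $A\wedge B$ and then $\exists n(A\wedge B)$. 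The case $\circ=\vee$ is analogous, splitting on the disjunction in each direction. The crucial point in every $\exists$-elimination and $\forall$-introduction step is precisely that $n$ is not free in $A$, so the eigenvariable conditions and the re-exportation of $A$ past the quantifier are sound.

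The genuinely one-directional assertion (1), $A\vee\forall nB\to\forall n(A\vee B)$, is where the asymmetry of intuitionistic logic shows, and this is the step I would flag as the conceptually interesting one. To prove it, assume $A\vee\forall nB$ and fix an arbitrary $n$ for the goal $\forall n(A\vee B)$; case-split on the disjunction. If $A$ holds, then $A\vee B$ holds immediately (and $A$ being $n$-free, it survives the universal generalization). If $\forall nB$ holds, instantiate at the fixed $n$ to get $B$, hence $A\vee B$. Discharging the case analysis and applying $\forall$-introduction yields $\forall n(A\vee B)$. I would note explicitly that the converse $\forall n(A\vee B)\to A\vee\forall nB$ is \emph{not} intuitionistically valid in general—this is essentially the constructive independence-of-premise phenomenon, and its failure is exactly what the semi-classical principles later in the paper are designed to measure—which is why only one implication is asserted here.

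The main obstacle is not mathematical depth but bookkeeping: making sure each universal generalization and existential elimination respects the eigenvariable condition, and that the hypothesis ``$n$ does not occur in $A$'' is invoked at precisely the points where $A$ must be transported across the quantifier binding $n$. Once those side conditions are tracked, all six implications (two each for (2) and (3), one for (1)) are short sequences of introduction and elimination rules, and no appeal to any principle beyond bare $\IQC$ is needed.
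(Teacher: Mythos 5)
Your proof is correct, and in fact the paper states Lemma~\ref{lem:mqdc} without any proof, treating these quantifier-shifting laws as standard; your natural-deduction derivations, with the eigenvariable conditions discharged precisely because $n$ does not occur in $A$, are exactly the routine argument the paper leaves implicit. One small calibration: your aside that the converse of (1) fails intuitionistically is what the paper records as Fact~\ref{lem:int}, where that scheme is identified with the constant-domain principle (whose addition to $\ha$ yields $\pa$), rather than with independence-of-premise, which the paper treats separately as $\fip{k}$.
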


As usual, the symbol $\vdash$ denotes the derivability.

\begin{fact}\alabel{dec}  Suppose $T$ is a formal system of arithmetic  extending $\IQC$. We say
 a formula $D$ of $T$ is  \emph{decidable} in $T$,
 if $T\vdash D\vee\neg D$.
  \begin{enumerate}
   \item If formulas $D$ and $D'$ are decidable in $T$, so are $ \neg D$ and
    $D\circ D'$ for $\circ=\wedge,\vee,\to$.

   \item \alabel{assert:bq} If a formula $D$ is decidable in $\ha$, then bounded universal quantifications $\forall
	n<t.\ D$ and $\exists n < t.\ D$ are decidable in $\ha$. 

   \item \alabel{assert:qf} Every $\Sigma^0_0$-formulas is decidable in $\ha$.
  \end{enumerate}
\end{fact}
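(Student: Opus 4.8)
The plan is to establish the three assertions in the listed order, since assertion~(2) invokes assertion~(1) and assertion~(3) invokes both earlier ones; only the base cases will appeal to genuinely arithmetical axioms, everything else being pure $\IQC$-reasoning or bookkeeping.

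For assertion~(1) I would argue in $\IQC$ by case analysis on the hypotheses $D\vee\neg D$ and $D'\vee\neg D'$. For $\neg D$: from $D\vee\neg D$ together with the intuitionistic law $D\to\dneg D$ one obtains $\dneg D\vee\neg D$, i.e.\ $\neg D\vee\dneg D$, which is the decision of $\neg D$. For $D\circ D'$ I would split into the (at most four) cases produced by $(D\vee\neg D)\wedge(D'\vee\neg D')$ and check in each that $T$ proves either $D\circ D'$ or $\neg(D\circ D')$; e.g.\ for $\circ={\to}$ the case $\neg D$ gives $D\to D'$ by \emph{ex falso}, the case $D'$ gives $D\to D'$ trivially, and the remaining case $D\wedge\neg D'$ gives $\neg(D\to D')$. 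The cases $\circ=\wedge,\vee$ are analogous. This is routine intuitionistic propositional reasoning.

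For assertion~(2) I would fix a $D$ decidable in $\ha$ (the bound variable $n$ may occur free in $D$, so $\ha\vdash\forall n(D\vee\neg D)$) and prove $\forall l\,\bigl((\forall n<l.\,D)\vee\neg(\forall n<l.\,D)\bigr)$ by the induction scheme on $l$, then instantiate $l:=t$. The base case $l=0$ holds because $\ha$ proves $\forall n<0.\,D$ (there is no $n<0$), so it is decidable. For the step I would use the $\ha$-provable equivalence $(\forall n<S(l).\,D)\lequiv(\forall n<l.\,D)\wedge D[l]$, which comes from $n<S(l)\lequiv(n<l\vee n=l)$ at the level of the primitive recursive ``$<$''; since $\forall n<l.\,D$ is decidable by the induction hypothesis and $D[l]$ by assumption, assertion~(1) makes the conjunction decidable. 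The existential case is identical, using $(\exists n<S(l).\,D)\lequiv(\exists n<l.\,D)\vee D[l]$ and closure under $\vee$.

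For assertion~(3) I would induct on the structure of a $\Sigma^0_0$-formula. Atomic formulas are equalities $s=t$ of terms, and the only genuinely arithmetical input is the decidability of numerical equality, $\ha\vdash\forall m\forall n\,(m=n\vee\neg\,m=n)$, obtained by the usual double induction from the Peano axioms $\forall n\,\neg S(n)=0$ and $\forall n\forall m\,(S(n)=S(m)\to n=m)$; instantiating yields the decision of $s=t$. Propositional combinations are handled by the induction hypothesis together with assertion~(1), and bounded quantifiers by the induction hypothesis together with assertion~(2). The main obstacle is thus not logical but arithmetical: the whole argument funnels into the two ``atomic'' facts — decidability of equality in $\ha$ and the successor-recurrences for the bounded quantifiers — which are exactly where Peano's axioms and the induction scheme are needed, while the connective and quantifier cases are mechanical.
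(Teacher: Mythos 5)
Your proof is correct: the reduction of everything to the two arithmetical inputs (decidability of $s=t$ via double induction on the Peano axioms, and the successor recurrences for bounded quantifiers proved by the induction scheme), with all connective cases handled by internal $\IQC$ case analysis, is exactly the standard argument. The paper itself states this as a Fact with no proof, treating it as well known, so your write-up simply supplies the routine verification the paper omits.
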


\begin{fact}\alabel{lem:int}None of the following two formulas
 \eqref{assert:decimp} and \eqref{assert:uniin} are
 provable in $\IQC$ but both of two formulas $(D\vee \neg D)\to
 \eqref{assert:decimp}$ and  $(D'\vee\neg D')\to
 \eqref{assert:uniin}$ are.
 \begin{align}
 \alabel{assert:decimp}
& (D\to B) \lequiv(\neg D\vee B).\\
 \alabel{assert:uniin}&\forall n(D'\vee B)\to D'\vee\forall n
  B\qquad\mbox{($n$ does not occur free in $D'$).} 
\end{align}
\end{fact}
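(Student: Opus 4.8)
The plan is to treat the two claims separately: first I would establish the two guarded (\emph{positive}) implications directly in $\IQC$ by natural-deduction derivations, and then refute the two \emph{unconditional} formulas by exhibiting Kripke countermodels, which is where the real content lies.

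For the positive part concerning \eqref{assert:decimp}, note that $(\neg D\vee B)\to(D\to B)$ already holds in $\IQC$ with no decidability hypothesis: assuming $D$, a left disjunct $\neg D$ yields $B$ by ex falso while a right disjunct $B$ gives $B$ directly. For the converse I would assume $D\vee\neg D$ and case-split: in the case $D$, from $D\to B$ and $D$ I obtain $B$, hence $\neg D\vee B$; in the case $\neg D$ the disjunction is immediate. Discharging the hypothesis gives $(D\vee\neg D)\to[(D\to B)\to(\neg D\vee B)]$, and combining the two directions yields the guarded biconditional $(D\vee\neg D)\to[(D\to B)\lequiv(\neg D\vee B)]$. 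For \eqref{assert:uniin} I would again assume $D'\vee\neg D'$ and split: the case $D'$ gives the left disjunct $D'\vee\forall n B$ at once; in the case $\neg D'$, assuming $\forall n(D'\vee B)$, I instantiate at an arbitrary $n$, eliminate the impossible disjunct $D'$ using $\neg D'$ and ex falso to get $B$, and---since neither $\neg D'$ nor the universal premise contains $n$ free---generalize to $\forall n B$, giving the right disjunct. Both derivations are routine.

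The substance of the Fact is the \emph{unprovability} in $\IQC$, for which I would produce Kripke countermodels. For \eqref{assert:decimp} it suffices to refute the direction $(D\to B)\to(\neg D\vee B)$: take the two-node chain $w_0<w_1$ with propositional atoms $D,B$ forced only at $w_1$. Then $D\to B$ is forced at $w_0$ (the only node forcing $D$ also forces $B$), whereas $\neg D$ fails at $w_0$ (since $D$ is forced above) and $B$ fails at $w_0$, so $\neg D\vee B$ is not forced and the implication breaks. For \eqref{assert:uniin} I would use a two-node chain $w_0<w_1$ with \emph{increasing} domains $M_{w_0}=\{0\}$, $M_{w_1}=\{0,1\}$, take $D'$ a propositional atom forced only at $w_1$, and let $B$ be a unary predicate $P(n)$ with $P(0)$ forced at both nodes and $P(1)$ forced nowhere. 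One checks that $\forall n(D'\vee B)$ is forced at $w_0$---at $w_0$ the only element is $0$ and $P(0)$ holds, while at $w_1$ the disjunct $D'$ holds---yet $D'\vee\forall n B$ fails at $w_0$, since $D'$ is not forced there and $\forall n B$ fails, the witness being the element $1$ at $w_1$, where $P(1)$ is not forced.

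The main obstacle is the countermodel for \eqref{assert:uniin}: one must choose the domains and the forcing so that every instance $D'\vee B$ is verified, which over $w_0$, where $D'$ is absent, forces $P$ on the elements already present, but over $w_1$, where $D'$ has appeared, permits a fresh element witnessing the failure of $\forall n B$. The increasing-domain structure is exactly what reconciles these two demands, and it is also where one sees the decidability hypothesis to be indispensable: the model refutes $D'\vee\neg D'$ at $w_0$, in harmony with the guarded version being provable. Persistence of atomic forcing is respected throughout, so both models are legitimate and complete the refutations.
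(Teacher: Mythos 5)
Your proposal is correct, and in fact it supplies something the paper does not: the paper states this Fact without any proof, treating it as a known result (the sentence following it merely points to Section~1.11.3 of Troelstra's volume, noting that $\IQC$ plus the scheme \eqref{assert:uniin} is complete for constant-domain Kripke models). Your positive derivations are routine and correct, including the needed variable condition when you generalize to $\forall n\, B$ under the open assumptions $\neg D'$ and $\forall n(D'\vee B)$, in neither of which $n$ occurs free. Your countermodels are the standard ones and both check out: the two-node propositional chain refutes the direction $(D\to B)\to(\neg D\vee B)$, which suffices to refute the biconditional \eqref{assert:decimp}; and the increasing-domain model (new element appearing exactly when $D'$ becomes true) refutes \eqref{assert:uniin}, which is exactly the constant-domain scheme. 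Your closing observation that this second model also refutes $D'\vee\neg D'$ at the root correctly explains why the decidability guard is indispensable and harmonizes with the paper's remark that adding the scheme \eqref{assert:uniin} to $\ha$ yields $\pa$. What your route buys over the paper's citation is a self-contained verification; what the paper's route buys is brevity for a result its intended readers would accept as folklore.
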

$\IQC$ with the scheme \eqref{assert:uniin}
added is complete for the class of Kripke models of constant domains, and $\ha$ plus
the schema is just $\pa$,  as explained in Section~1.11.3 of \cite{MR48:3699}.

\subsection{Proof of Theorem~\ref{thm:main}}
\alabel{sec:3.2}

For a formula $A$, we define a formula $A^\perp$ classically equivalent to $\neg
A$, as follows:

\begin{definition}
  For any formula $A$, we define
 the \emph{dual} $A^\perp$ as follows:
 \begin{itemize}
 \item When $A$ is prime, $A^\perp$ is the negation $\neg A$.
 \item When $A$ is a negated formula $\neg B$, then $A^\perp$ is $B$.
 \item When $A$ is $B\vee C$, then $A^\perp$ is $B^\perp\wedge C^\perp$.
 \item When $A$ is $B\wedge C$, then $A^\perp$ is $B^\perp\vee C^\perp$.
 \item When $A$ is $B\to C$, then $A^\perp$ is $B\wedge C^\perp$.
 \item When $A$ is $\forall n.\, B$, then $A^\perp$ is $\exists n.\,
   B^\perp$.
 \item When $A$ is $\exists n.\, B$, then $A^\perp$ is $\forall n.\, B^\perp$.
 \end{itemize}
\end{definition}
The dual operation is more manageable than the propositional connective
$\neg$.
\begin{fact}\alabel{fact:aho}
\begin{enumerate}\item
$\ha\vdash \dual P\lequiv \neg P\quad  \qquad(\mbox{$P$ is a
		      $\Sigma^0_0$-formula.})$ \alabel{0dec}
\item $\ha\vdash (A^\perp)^\perp\lequiv  A\  \qquad(\mbox{$A$ is a
		      $\Sigma^0_k$-formula or a
		      $\Pi^0_k$-formula.})$\alabel{1dec}
\end{enumerate}
\end{fact}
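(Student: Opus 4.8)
The plan is to prove both parts by induction on formula structure, with assertion~\eqref{0dec} supplying the base case for assertion~\eqref{1dec}. Throughout, the mechanism that lets me cross from intuitionistic to classical reasoning is the decidability of $\Sigma^0_0$-formulas recorded in Fact~\ref{dec}: every subformula that occurs is decidable in $\ha$, so the De~Morgan equivalences and double-negation eliminations that fail intuitionistically become available node by node.

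For assertion~\eqref{0dec} I would argue by structural induction on the $\Sigma^0_0$-formula $P$. The prime case is immediate, since then $\dual P$ is literally $\neg P$. For $P=\neg Q$ I use that $Q$ is decidable to get $\dual P = Q\lequiv\neg\neg Q=\neg P$. For $P=Q\vee R$ the claim reduces to the intuitionistic De~Morgan law $\neg(Q\vee R)\lequiv\neg Q\wedge\neg R$ together with the induction hypothesis. The genuinely classical cases are $P=Q\wedge R$, $P=Q\to R$, and the bounded quantifiers: here one must prove $\neg(Q\wedge R)\to\neg Q\vee\neg R$, $\neg(Q\to R)\to Q\wedge\neg R$, and $\neg\forall n<t.\,Q\to\exists n<t.\,\neg Q$, and each follows by case analysis on the decidable disjunctions $Q\vee\neg Q$, $R\vee\neg R$ (and, for the bounded case, a bounded induction on $t$ using that $\forall n<t.\,Q$ is itself decidable by Fact~\ref{dec}(\ref{assert:bq})). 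For the bounded quantifiers I treat $\forall n<t.\,Q$ and $\exists n<t.\,Q$ as the abbreviations $\forall n(f(n,t)=1\to Q)$ and $\exists n(f(n,t)=1\wedge Q)$, compute the dual by the $\to$- and $\wedge$-clauses, and invoke Fact~\ref{lem:int}, formula~\eqref{assert:decimp}, with the decidability of $f(n,t)=1$ to reconcile $\neg(f(n,t)=1)\vee \dual Q$ with $f(n,t)=1\to \dual Q$.

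For assertion~\eqref{1dec} I would induct on $k$. When $k=0$ the formula $A$ is itself $\Sigma^0_0$, and since the dual of a $\Sigma^0_0$-formula is again $\Sigma^0_0$, assertion~\eqref{0dec} applied twice yields $(\dual A)^\perp\lequiv\neg\dual A\lequiv\neg\neg A\lequiv A$, the last step again by decidability of $A$. For the inductive step I peel off the leading quantifier: if $A=\forall n.\,B$ then $\dual A=\exists n.\,\dual B$ and $(\dual A)^\perp=\forall n.\,(\dual B)^\perp$, and since $B$ is a $\Sigma^0_{k-1}$- or $\Pi^0_{k-1}$-formula the induction hypothesis gives $\ha\vdash\forall n\bigl((\dual B)^\perp\lequiv B\bigr)$, whence $\forall n.\,(\dual B)^\perp\lequiv\forall n.\,B=A$; the case $A=\exists n.\,B$ is symmetric.

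I expect the main obstacle to lie in the bounded-quantifier clauses of assertion~\eqref{0dec}. One has to check carefully that the dual of the \emph{abbreviated} bounded quantifier agrees, up to $\ha$-provable equivalence, with the bounded quantifier of the dualized matrix, and that the step $\neg\forall n<t.\,Q\to\exists n<t.\,\neg Q$ is genuinely available; this is the only place where classical reasoning enters in a non-propositional way, and it is exactly where the decidability of $Q$ and a bounded induction on $t$ are indispensable. Everything else is routine propagation of the induction hypothesis through the connectives and quantifiers.
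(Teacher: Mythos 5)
Your proposal is correct and follows essentially the same route as the paper: assertion (1) by structural induction on $P$ (which the paper states in one line without the case analysis you spell out), and assertion (2) by pushing the double dual through the quantifier prefix down to the $\Sigma^0_0$ matrix, where assertion (1) applied twice plus the decidability of $\Sigma^0_0$-formulas gives $(P^\perp)^\perp \lequiv \neg\neg P \lequiv P$. The only cosmetic difference is that you peel the prefix one quantifier at a time by induction on $k$, whereas the paper observes in a single step that double dualization leaves the entire alternating prefix unchanged.
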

\begin{proof}\eqref{0dec} By induction on $P$. 
\eqref{1dec}  First consider the case the formula  $A$ is a $\Sigma^0_k$-formula. Then
 $A$ is written as $\exists
 n_1 \forall n_2 \exists n_3\cdots Q n_k.\ P$ for some
 $\Sigma^0_0$-formula $P$. Then
$(A^\perp)^\perp$ is $\exists
 n_1 \forall n_2 \exists n_3\cdots Q n_k.\ (P^\perp)^\perp$. 
The Assertion~\eqref{0dec} implies $\vdash_\ha (P^\perp)^\perp \lequiv \dneg
 P$.  But Fact~\ref{dec}~\eqref{assert:qf}, implies the decidability of
 $P$. So $\vdash_\ha \dneg P \lequiv P $. Hence $\vdash_\ha
 (P^\perp)^\perp\lequiv P$.
Therefore $\vdash_\ha (A^\perp)^\perp \lequiv A$. When $A$ is a
 $\Pi^0_k$-formula, the proof is similar.
\end{proof}

The axiom scheme  $\slemm{k}$ is the axiom scheme consisting of the following form:
\begin{eqnarray}
&     &\exists n_1\forall n_2\cdots Q  n_{k-1} \Q  n_k P[n_1,\ldots, n_k]\nonumber\\
&\vee&\forall m_1\exists m_2\cdots \Q  m_{k-1} Q  m_k \left(P [m_1,
						       \ldots, m_k] \right)^\perp.\alabel{slemk}
\end{eqnarray}
Here $P[n_1, \ldots, n_k]$ and $P[m_1, \ldots, m_k]$ are $\Sigma^0_0$-formulas
possibly containing  free variables other than indicated variables, and the quantifier $Q$ is $\forall$ for odd
$k$ and is $\exists$ otherwise.  $\Q$ is $\exists$ if $Q$ is $\forall$,
and is $\forall$ otherwise.
\begin{align}\slemm{k}\vdash_\ha \plemm{k}\ \mbox{and}\  \plemm{k}\vdash_\ha
		      \slemm{k} \alabel{2dec}
\end{align}
 follows from  Fact~\ref{fact:aho}~\eqref{1dec}, because the dual of a
 $\Sigma^0_k$-formula~($\Pi^0_k$-formula, resp.) is a $\Pi^0_k$-formula~($\Sigma^0_k$-formula, resp).

\begin{fact}\alabel{fac} For any formula $A$, $\IQC$ proves
  \newcounter{assertdualbot}
\setcounter{assertdualbot}{1} (\theassertdualbot)
 $\neg(A\wedge A^\perp)$ and 
\newcounter{assertdualneg}\setcounter{assertdualneg}{2}
(\theassertdualneg) $(A\vee A^\perp)\to (A^\perp\lequiv   \neg A)$.
\end{fact}
\begin{proof}
(\theassertdualbot) The proof is by induction on the structure of
  $A$. When $A$ is prime or negated, the assertion is trivial. When $A$
  is $B\vee C$, let us assume $B\vee C$ and the dual $A^\perp$, that is,
  $B^\perp\wedge C^\perp$. The first conjunct contradicts by the
  induction hypothesis in case of $B$, and the second by the induction
  hypothesis in case of $C$. So, $\neg(A\wedge A^\perp)$. When $A$ is
  a conjunction, the assertion is similarly verified. When $A$ is $B\to
  C$, let us assume $B\to C$ and the dual, that is $B\wedge
  C^\perp$. From the first conjunct $B$ and $B\to C$, we infer $C$,
  which contradicts by the induction hypothesis against the second conjunct
  $C^\perp$. When $A$ is $\forall n.\, B [n]$, let us assume $\forall
  n.\ B[n]$ and the dual $\exists n.\, (B [n])^\perp$. For a fresh
  variable $m$, assume $(B[m])^\perp$. But we can infer $B [m]$ from
  $A$. This contradicts against the induction hypothesis. When $A$ is
  existentially quantified, the assertion is similarly verified.
(\theassertdualneg)  The Assertion~(\theassertdualbot) implies $(A\vee \dual A ) \to (\dual A\to \neg
       A)$, while $(A\vee \dual A ) \to (\neg A\to \dual
       A)$ is immediate.
\end{proof}

The two axiom schemes $\slemm{k}$ and $\slem{k}$ are
equivalent over $\ha$, as we prove below:

\begin{lemma}\alabel{lem3.8} For any $k\ge0$, 
(1)  $\slemm{k}\vdash_\IQC\slem{k}$, and (2)  $\slem{k}\vdash_\ha\slemm{k}$.

\end{lemma}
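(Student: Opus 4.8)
The plan is to prove the two directions separately, exploiting the relationship between the dual operation $(\cdot)^\perp$ and ordinary negation $\neg$ that was established in Fact~\ref{fac}. Recall that $\slem{k}$ has instances $A\vee\neg A$ with $A$ a $\Sigma^0_k$-formula, while $\slemm{k}$ has instances $A\vee A^\perp$ with $A$ a $\Sigma^0_k$-formula; by the definition of the dual, the dual of such an $A$ is the matching $\Pi^0_k$-formula displayed in \eqref{slemk}. So the two schemes differ only in whether the second disjunct is written as $\neg A$ or as $A^\perp$.

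For direction~(1), $\slemm{k}\vdash_\IQC\slem{k}$, I would argue as follows. Fix a $\Sigma^0_k$-instance $A$. The corresponding instance of $\slemm{k}$ gives $A\vee A^\perp$. By Fact~\ref{fac}(\theassertdualneg), from $A\vee A^\perp$ we obtain $A^\perp\lequiv\neg A$ in $\IQC$; in particular $A^\perp\to\neg A$. Combining this implication with the disjunction $A\vee A^\perp$ by disjunction elimination immediately yields $A\vee\neg A$, which is the desired instance of $\slem{k}$. This direction is purely intuitionistic and requires no arithmetical content beyond the already-proved Fact~\ref{fac}, matching the claim that it holds in $\IQC$.

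For direction~(2), $\slem{k}\vdash_\ha\slemm{k}$, I would run the converse. Fix the same $A$; the instance of $\slem{k}$ gives $A\vee\neg A$. The goal is $A\vee A^\perp$, so it suffices to derive $\neg A\to A^\perp$ and then substitute into the disjunction. Here is where the arithmetical strength of $\ha$ (rather than bare $\IQC$) will be needed, and I expect this to be the main obstacle: one must show $\ha\vdash\neg A\to A^\perp$ for a $\Sigma^0_k$-formula $A$. The natural route is induction on the prenex structure of $A$, pushing the negation inward through the quantifiers and the matrix, and at the base appealing to Fact~\ref{dec}(\ref{assert:qf}) (decidability of $\Sigma^0_0$-formulas) together with Fact~\ref{fact:aho}(\eqref{0dec}) so that $\neg P$ and $P^\perp$ are provably equivalent on the quantifier-free matrix. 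The quantifier steps are where classical-style reasoning must be simulated: pushing $\neg$ past an existential to produce a universal is intuitionistically available, but obtaining the $A^\perp$ form at an existential quantifier (turning $\neg\exists$ into $\forall\neg\cdots$) is the delicate point, and it is precisely the decidability of the inner quantifier-free part supplied by $\ha$ that licenses it. Once $\neg A\to A^\perp$ is in hand, disjunction elimination on $A\vee\neg A$ gives $A\vee A^\perp$, completing the lemma.
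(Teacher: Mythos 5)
Your direction (1) is correct and is exactly the paper's argument: from $A\vee\dual{A}$, the second assertion of Fact~\ref{fac} yields $\dual{A}\lequiv\neg A$, whence $A\vee\neg A$ in $\IQC$. The gap is in direction (2). Your plan reduces it to the claim that $\ha\vdash\neg A\to\dual{A}$ for every $\Sigma^0_k$-formula $A$, with the quantifier steps ``licensed by decidability of the quantifier-free matrix.'' This claim is false for $k\ge 2$. Take $A=\exists n\forall m.\,P(n,m)$ with $P$ quantifier-free; then $\neg A\to\dual{A}$ unwinds intuitionistically to $\forall n\,(\neg\forall m.\,P\to\exists m.\,\neg P)$, and for decidable $P$ this scheme is equivalent over $\ha$ to Markov's principle $\sdne{1}$, which $\ha$ does not prove. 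You have also misplaced the delicate point: pushing $\neg$ through an \emph{existential} quantifier ($\neg\exists\to\forall\neg$) is intuitionistically free, so the step you flag as problematic is harmless; the genuinely classical step is pushing $\neg$ through a \emph{universal} quantifier ($\neg\forall\to\exists\neg$), and no amount of decidability of the innermost matrix supplies it. Hence an induction on the prenex structure of $A$ carried out inside plain $\ha$ cannot close.

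The paper's proof evades this by inducting on $k$ and using the assumed scheme $\slem{k}$ itself, not bare $\ha$, to get decidability of the whole inner formula rather than of its matrix. Writing the $\Sigma^0_k$-instance as $\exists n.\,B$ with $B$ a $\Pi^0_{k-1}$-formula: $\slem{k}$ proves $\slem{k-1}$, so the induction hypothesis gives $\slem{k}\vdash_\ha\slemm{k-1}$, hence $\plemm{k-1}$ by \eqref{2dec}, hence $B\vee\dual{B}$; then Fact~\ref{fac} yields $\neg B\lequiv\dual{B}$ \emph{in $\ha+\slem{k}$}. Now only the outermost, intuitionistically valid equivalence $\neg\exists n.\,B\lequiv\forall n.\,\neg B$ is used, and $\neg B$ is replaced by $\dual{B}$ under the quantifier, turning $\exists n.\,B\vee\neg\exists n.\,B$ into $\exists n.\,B\vee\forall n.\,\dual{B}$, the desired instance of $\slemm{k}$. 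If you wish to keep your formulation ``derive $\neg A\to\dual{A}$, then substitute into the disjunction,'' the repair is to prove that implication in $\ha+\slem{k}$ by this same induction on $k$, not in $\ha$ alone.
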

\begin{proof}The first assertion follows from Fact~\ref{fac}~(\theassertdualneg) in
 $\IQC$. 
The second assertion is proved
by  induction on $k$. The assertion holds
 for $k=0$, because  $\vdash_\ha \slemm{0}$ follows from
 Fact~\ref{dec}~\eqref{assert:qf} and Fact~\ref{fact:aho}~\eqref{0dec}. Let $k>0$. Consider
a $\Sigma^0_k$-formula $\exists n.\, B$ with $B$ being any
$\Pi^0_{k-1}$-formula. 
 By the induction hypothesis,  we have $\slem{k}\vdash_\ha
 \slemm{k-1}$. Because $\slemm{k-1}$ and $\plemm{k-1}$ are equivalent over
 $\ha$ by  \eqref{2dec}, we have
 $\slem{k}\vdash_\ha\dual{B} \vee B$. 
By this and Fact~\ref{fac}~(\theassertdualneg), we have
 $\slem{k}\vdash_\ha B^\perp\lequiv \neg B$. So  $\slem{k}\vdash_\IQC
 \exists n. B\vee \forall n. \neg B$ implies $\slem{k}\vdash_\ha
 \exists n. B\vee \forall n. \dual B$. Therefore $\slem{k}\vdash_\ha
 \slemm{k}$.  
\end{proof}

We prepare  the proof of Theorem~\ref{thm:main}~\eqref{eq4} below.
An instance \eqref{slemk} of $\slemm{k}$ is equivalent in $\pa$ to the
following $\Sigma^0_{k+1}$-formula:
\begin{align}
    \alabel{eq:se}
    \exists n_1(\forall m_1\forall n_2) (\exists m_2\exists n_3)\cdots (Q
    m_{k-2}Q  n_{k-1}) (\Q  m_{k-1}\Q  n_{k}) Q  m_k \nonumber \\
\left(P[n_1,\ldots,
 n_k]\;\vee\;\neg P [m_1, \ldots, m_k]\right).
 \end{align}
Here $P[n_1,\ldots, n_k]$ and $\neg P[m_1, \ldots, m_k]$ are
$\Sigma^0_0$-formulas possibly containing free variables other than
indicated variables.

We apply $\sdne{k+1}$ to the \emph{G\"odel-Gentzen translation}~(Section~81 of \cite{MR0051790}) result of \eqref{eq:se}.
\begin{lemma}\alabel{lem:b}
  Let $k\geq 1$. The $\Sigma^0_{k+1}$-formula \eqref{eq:se} is provable in $\ha+\dne{k+1}$.
  \end{lemma}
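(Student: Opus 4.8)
The plan is to exploit that \eqref{eq:se} is a classical theorem and then descend to $\ha+\dne{k+1}$ through the negative translation, exactly as announced just before the statement. First I would observe that $\pa\vdash\eqref{eq:se}$. Writing $M$ for the matrix $P[n_1,\ldots,n_k]\vee\neg P[m_1,\ldots,m_k]$, the paragraph preceding the lemma already records that \eqref{eq:se} is equivalent in $\pa$ to the instance \eqref{slemk} of $\slemm{k}$; and \eqref{slemk} is a classical instance of excluded middle, since its first disjunct is a $\Sigma^0_k$-formula while its second disjunct is, via Fact~\ref{fact:aho}\eqref{0dec}, $\ha$-equivalent to the classical negation of the first. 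Hence $\pa\vdash\eqref{eq:se}$, and by soundness of the G\"odel--Gentzen negative translation ($\pa\vdash A$ implies $\ha\vdash A^g$), writing $G$ for the translation of~\eqref{eq:se}, we obtain $\ha\vdash G$.

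Next I would put $G$ into a manageable normal form. Since $P[n_1,\ldots,n_k]$ and $P[m_1,\ldots,m_k]$ are $\Sigma^0_0$, the matrix $M$ is decidable in $\ha$ by Fact~\ref{dec}, whence $\ha\vdash M^g\lequiv M$. Using the intuitionistic equivalences $(\exists x\,B)^g\lequiv\dneg\exists x\,B^g$ and $(\forall x\,B)^g\lequiv\forall x\,B^g$ together with the congruence of $\ha$-provable equivalence under connectives and quantifiers, the formula $G$ is $\ha$-equivalent to the formula $D_1$ obtained from \eqref{eq:se} by prefixing $\dneg$ to every existential block and leaving the universal blocks and the matrix $M$ untouched. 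Concretely, grouping \eqref{eq:se} into its $k+1$ alternating blocks $Q_1\vec x_1\cdots Q_{k+1}\vec x_{k+1}\,M$ with $Q_i=\exists$ for odd $i$, $D_1$ inserts one $\dneg$ in front of each existential block.

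The core of the argument, and the step I expect to require the most care, is to strip these double negations using only $\dne{k+1}$. I would prove $\ha+\dne{k+1}\vdash D_i\to C_i$ by downward induction on $i$, where $C_i:=Q_i\vec x_i\cdots Q_{k+1}\vec x_{k+1}\,M$ is the tail of \eqref{eq:se} from block $i$ (with $C_{k+2}:=M$) and $D_i$ its translated counterpart. At a universal block the implication is preserved by generalisation, with no appeal to any classical principle. At an existential block $Q_i=\exists$, monotonicity of $\exists$ and of $\dneg$ turns the inductive hypothesis $D_{i+1}\to C_{i+1}$ into $D_i\to\dneg C_i$; now $C_i$ is a $\Sigma^0_{k+2-i}$-formula with $k+2-i\le k+1$, so $\dneg C_i\to C_i$ is at hand and yields $D_i\to C_i$.

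The one point that must be checked to make this induction go through with the single hypothesis $\dne{k+1}$ is that $\dne{k+1}$ already proves $\dne{j}$ in $\ha$ for every $j\le k+1$: any $\Sigma^0_j$-formula is, up to padding with vacuous quantifiers, a $\Sigma^0_{k+1}$-formula, so $\Sigma^0_{k+1}$-double negation elimination applies to it. This is exactly what licenses the use of $\dneg C_i\to C_i$ at each existential block, whose complexities range over the odd values $\le k+1$. Taking $i=1$ gives $\ha+\dne{k+1}\vdash D_1\to\eqref{eq:se}$, hence $\ha+\dne{k+1}\vdash G\to\eqref{eq:se}$; combined with $\ha\vdash G$ from the first paragraph, this proves $\ha+\dne{k+1}\vdash\eqref{eq:se}$, as required.
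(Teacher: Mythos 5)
Your proposal is correct and takes essentially the same route as the paper's proof: both pass from classical provability of \eqref{eq:se} through the G\"odel--Gentzen translation, reduce the translation over $\ha$ (using decidability of the $\Sigma^0_0$ matrix) to \eqref{eq:se} with $\dneg$ prefixed to each existential block, and then strip those double negations from the innermost block outward using instances of $\dne{j}$ with $j\le k+1$, each a consequence of $\dne{k+1}$. Your downward induction proving $D_i\to C_i$ via monotonicity of $\exists$, $\forall$ and $\dneg$ is only a cleaner bookkeeping of the paper's iterative replacement of the rightmost $\dneg C$ by $C$ inside the formula.
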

\begin{proof} It is easy to see that the $\Sigma^0_{k+1}$-formula
 \eqref{eq:se} is
 equivalent in a classical logic to an instance of  $\slemm{k}$. So,
$\ha$ proves
 the G\"odel-Gentzen translation of
  \eqref{eq:se}, which is
  obtained from \eqref{eq:se}
  \begin{enumerate}
  \item by replacing each $(\exists l)$ with $(\neg\forall l\neg)$; and 
  \item by replacing the disjunction $P[n_1,\ldots,n_k]\ \vee\ \neg
	P[m_1,\ldots,m_k]$ with a formula $\neg(\neg
    P[n_1,\ldots,n_k]\ \wedge\ \dneg P[m_1,\ldots,m_k])$.
  \end{enumerate}
  However,
  \begin{enumerate}
  \item for each formula $A$,  $\IQC\vdash \neg\forall l\neg A \lequiv \dneg \exists l.\, A$; and
  \item  $\ha \vdash P[n_1,\ldots,n_k] \vee \neg P[m_1,\ldots,m_k]\lequiv \neg(\neg
    P[n_1,\ldots,n_k]\wedge \dneg P[m_1,\ldots,m_k])$, by Fact~\ref{dec}~\eqref{assert:qf}.
  \end{enumerate}
  So, $\ha$ proves a formula obtained from \eqref{eq:se} by only inserting
  $\dneg$ just before
  each existential quantifier. The resulting formula is 
  \begin{displaymath}
    \dneg\exists n_1(\forall m_1\forall n_2) (\dneg\exists m_2\dneg\exists
    n_3)\cdots  \left(P[n_1,\ldots, n_k]\;\vee\;\neg P[m_1,\ldots, m_k]\,\right),\quad (f_0)
  \end{displaymath}
  and ends with
  \begin{description}
  \item[($o_0$)]\ \  $\forall n_{k-1}\dneg\exists m_{k-1}\dneg\exists n_{k}\forall 
     m_{k} \left( P[\vec{n}]\vee\neg P[\vec{m}]\right)$ for odd $k$; and
   \item[($e_0$)]\ \  $\forall n_{k-1}\dneg\exists m_{k} \left( P[\vec{n}]\vee\neg
     P[\vec{m}]\right)$ for even $k$.
  \end{description}
  In each case, the rightmost $\dneg$ is just before a $\Sigma^0_{1+(k
    \bmod 2)}$-formula.  So, if we can use $\dne{1+(k \bmod 2)}$, then
    the rightmost $\dneg$('s) in the subformulas ($o_0, e_0$) can be
    safely eliminated from the formula $(f_0)$. But $\dne{1+(k \bmod
    2)}$ follows from $\dne{k+1}$. Thus $\dne{k+1}$ proves in $\ha$ the
    formula ($f_0$) with the rightmost $\dneg$('s) eliminated from the
    end-part $(o_0, e_0)$. The resulting formula ($f_1$) ends with
  \begin{description}
  \item[($o_1$)]\ \  $\dneg\exists m_{k-3}\dneg\exists n_{k-2}(\forall m_{k-2}\forall n_{k-1})(\exists m_{k-1}\exists n_k )\forall m_k  \left( P[\vec{n}]\vee\neg P[\vec{m}]\right)$ for odd  $k$; and
  \item[($e_1$)]\ \  $\dneg\exists m_{k-2}\dneg\exists n_{k-2}(\forall m_{k-1}\forall n_k )(\exists m_k ) \left( P[\vec{n}]\vee\neg P[\vec{m}]\right)$ for even $k$.
  \end{description}
  In each case, the rightmost $\dneg$ is just before a $\Sigma^0_{3+(k \bmod
    2)}$-formula.  So, if we can use $\dne{3+(k \bmod
    2)}$, then the rightmost $\dneg$'s in ($o_1, e_1$) can be safely
    eliminated from $(f_1)$. But $\dne{3+(k \bmod
    2)}$ follows from $\dne{k+1}$. Thus $\dne{k+1}$ proves in $\ha$ the
  formula ($f_1$) with the rightmost $\dneg$'s eliminated from the
  end-part $(o_1, e_1)$. 

  By iterating this argument, we can safely eliminate all $\dneg$'s from
  $(f_0)$. This establishes that $\dne{k+1}$ proves in $\ha$ the
  $\Sigma^0_{k+1}$-formula \eqref{eq:se}. This completes the proof of Lemma~\ref{lem:b}.
\end{proof}

\bigskip
We will present the proof of Theorem~\ref{thm:main}.

\medskip
 Assertion~\eqref{eq3} ``$\slem{k}\vdash_\ha \plem{k}$'' is verified as follows: By Lemma~\ref{lem3.8}, 
we see that for every $\Sigma^0_0$-formula $P[\vec{n}]$, a disjunction of $\exists n_1\forall n_2\cdots Q  n_k\neg
  P[\vec{n}]$ and $\forall n_1\exists n_2\cdots \Q  n_k P[\vec{n}]$ is
 deducible in $\ha$ from $\slem{k}$. When the first disjunct holds, then  it 
  contradicts against the dual of the first disjunct by Fact~\ref{fac}~(\theassertdualbot), and thus we have the negation
  $\neg\forall n_1\exists n_2\cdots \Q  n_k P[\vec{n}]$ of the dual. In the
  other case, then we have the second disjunct $\forall n_1\exists n_2\cdots \Q  n_k
  P[\vec{n}]$. In both cases, we have $\forall n_1\exists n_2\cdots \Q  n_k
  P[\vec{n}]\;\vee\;\neg\forall n_1\exists n_2\cdots \Q  n_k P[\vec{n}]$,
 which is an instance of
  $\plem{k}$.

\bigskip
Assertion~\eqref{eq4} ``$\sdne{k+1}\vdash_\ha \slem{k}$'' of  Theorem~\ref{thm:main} will be proved by induction on $k$.
 The  case $k=0$ follows from Fact~\ref{dec}~\eqref{assert:qf}.
Next consider the   case $k>0$.

\begin{claim}\alabel{claim:I}
Suppose that $j\le k$ is a positive odd number and
 that a variable $m_j$ does not occur free in a $\Pi^0_{k-j}$-formula $\forall n_{j+1} \exists n_{j+2}\cdots Q n_k .\ P[n_1,\cdots, n_k]$. Then
 $\ha+\sdne{k+1}$ proves the following equivalence formula:
\begin{align*}
&\forall m_j \left(
			  \forall n_{j+1} \exists n_{j+2}\cdots Q n_k .\ P[n_1,\cdots, n_k]
\  \vee   \qquad  \exists m_{j+1} \forall m_{j+2}\cdots
			  \overline{Q} m_k .\ \neg P[m_1, \ldots, m_k]\right)\\
&\lequiv \ \ \left(
			  \forall n_{j+1} \exists n_{j+2}\cdots Q n_k . P[n_1,\cdots, n_k]
        \; \vee\;   \forall m_j  \exists m_{j+1} \forall m_{j+2}\cdots
			  \overline{Q} m_k .\neg P[m_1, \ldots, m_k]\right).
\end{align*}
\end{claim}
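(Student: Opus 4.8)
The plan is to read the asserted equivalence as the law permitting the universal quantifier $\forall m_j$ to be moved across a disjunction. Write $D$ for the $\Pi^0_{k-j}$-formula $\forall n_{j+1}\exists n_{j+2}\cdots Q n_k.\ P[n_1,\ldots,n_k]$, which by hypothesis does not contain $m_j$ free, and write $E[m_j]$ for the second disjunct $\exists m_{j+1}\forall m_{j+2}\cdots \overline{Q} m_k.\ \neg P[m_1,\ldots,m_k]$. Then the left-hand side of the claim is $\forall m_j(D\vee E[m_j])$ and the right-hand side is $D\vee\forall m_j E[m_j]$, so what must be shown is $\ha+\sdne{k+1}\vdash\forall m_j(D\vee E[m_j])\lequiv(D\vee\forall m_j E[m_j])$.

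The implication from right to left is purely intuitionistic: it is the instance of Lemma~\ref{lem:mqdc}(1) with its $A$ taken to be $D$ and its $B$ taken to be $E[m_j]$, which is legitimate precisely because $m_j$ is not free in $D$. The implication from left to right is the instance of the formula~\eqref{assert:uniin} of Fact~\ref{lem:int} with $D'=D$ and $B=E[m_j]$. That Fact tells us this implication is not available in $\IQC$, but becomes provable as soon as $D$ is decidable, i.e.\ as soon as $D\vee\neg D$ is at hand. Hence the whole claim reduces to establishing $\ha+\sdne{k+1}\vdash D\vee\neg D$.

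The decidability of the $\Pi^0_{k-j}$-formula $D$ is the crux. If $k-j=0$, then $D$ is a $\Sigma^0_0$-formula and $D\vee\neg D$ is immediate from Fact~\ref{dec}~\eqref{assert:qf}. If $k-j\ge 1$, I appeal to the induction hypothesis of the induction on $k$ by which~\eqref{eq4} is being established: since $j$ is positive we have $k-j<k$, so that case of~\eqref{eq4} is already available and gives $\sdne{(k-j)+1}\vdash_\ha\slem{k-j}$. Moreover $\sdne{(k-j)+1}$ itself follows from $\sdne{k+1}$, because $(k-j)+1\le k+1$ and any $\Sigma^0_{(k-j)+1}$-formula can be padded with dummy quantifiers into a $\Sigma^0_{k+1}$-formula; thus $\ha+\sdne{k+1}\vdash\slem{k-j}$. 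Finally, the already established assertion~\eqref{eq3}, taken at index $k-j$, yields $\slem{k-j}\vdash_\ha\plem{k-j}$, and $\plem{k-j}$ is exactly the decidability $D\vee\neg D$ of $\Pi^0_{k-j}$-formulas. Chaining these gives $\ha+\sdne{k+1}\vdash D\vee\neg D$ and closes the proof.

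I expect this last paragraph to be the main obstacle, not because any single step is hard but because it requires correctly locating the needed decidability within the induction: the formula $D$ must sit strictly below level $k$ in the quantifier hierarchy, which is guaranteed here only by the hypothesis that $j\ge 1$. Were $j=0$ allowed, $D$ would be $\Pi^0_k$ and the argument would become circular, since deciding it would require the instance of~\eqref{eq4} currently under proof.
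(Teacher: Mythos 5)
Your proof is correct and follows essentially the same route as the paper's: both reduce the equivalence to the decidability of the $\Pi^0_{k-j}$-disjunct, obtained from $\sdne{k+1}\vdash\sdne{k-j+1}$, the induction hypothesis on Assertion~\eqref{eq4} giving $\slem{k-j}$, and Assertion~\eqref{eq3} giving $\plem{k-j}$, and then both invoke Lemma~\ref{lem:mqdc} for one direction and Fact~\ref{lem:int} for the other. Your explicit treatment of the $k-j=0$ base case and the remark on why $j\ge 1$ blocks circularity are minor refinements of, not departures from, the paper's argument.
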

\begin{proof}In the left-hand side of the equivalence formula,
 we can easily see the first disjunct $\forall n_{j+1} \exists n_{j+2}
 \cdots Q n_k.\ P[n_1, \ldots, n_k]$ is a $\Pi^0_{k-j}$-formula.  The
 system $\ha+\sdne{k+1}$ proves $\sdne{k-j+1}$ which proves $\slem{k-j}$
 by the induction hypothesis on Assertion~\eqref{eq4} of
 Theorem~\ref{thm:main}. Hence the system $\ha+\sdne{k+1}$ proves
 $\plem{k-j}$ by Assertion~\eqref{eq3} of Theorem~\ref{thm:main}. Thus
 the $\Pi^0_{k-j}$-disjunct $\forall n_{j+1} \exists n_{j+2} \cdots Q
 n_k.\ P [n_1, \ldots, n_k]$ of the left-hand side is decidable in
 $\ha+\sdne{k+1}$, where the variable $m_j$ does not occur free. Because
 of Lemma~\ref{lem:mqdc} and Fact~\ref{lem:int},
 the left-hand side and the right-hand side of the equivalence formula
 is indeed equivalent in the system $\ha+\sdne{k+1}$.
\end{proof}

Next, we will consider when the universal
quantifier can be safely moved over $\Sigma^0_{k-i+1}$-disjunct where $i\ge1$.

\begin{claim}\alabel{claim:II}
Suppose that $i\le k$ is a positive even number and
 that a variable $n_i$ does not occur free in a  $\Sigma^0_{k-i+1}$-disjunct $\exists m_i \forall m_{i+1}  \cdots
 \overline{Q} m_k.\ \neg P [m_1, \ldots, m_k]$ does not contain a free
 variable $n_i$. Then
 $\ha+\sdne{k+1}$ proves the following equivalence formula
\begin{align*}
  &\ \ \ \ \ \ \ \forall n_i \left(
			  \exists n_{i+1} \forall n_{i+2}\cdots Q n_k .\ P [n_1, \ldots, n_k]\  \vee\  \exists m_i \forall m_{i+1}\cdots
			  \overline{Q} m_k .\ \neg P[m_1, \ldots, m_k]\right)\\
&\lequiv \ \ \left(\forall n_i \;\ \exists n_{i+1}\forall n_{i+2}\cdots Q n_k .\ P [n_1, \ldots, n_k]
        \ \vee  \  \exists m_i  \forall m_{i+1} \cdots
			  \overline{Q} m_k .\ \neg P[m_1, \ldots, m_k]\right).
\end{align*} 

\end{claim}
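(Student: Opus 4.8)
The plan is to treat this exactly as the dual of Claim~\ref{claim:I}: we again move a universal quantifier across the disjunct it does not bind, except that here the freed disjunct is a $\Sigma^0$-formula rather than a $\Pi^0$-formula. Abbreviate the two disjuncts by
\[
A := \exists n_{i+1}\forall n_{i+2}\cdots Q n_k.\ P[n_1,\ldots,n_k],
\qquad
B := \exists m_i\forall m_{i+1}\cdots \Q m_k.\ \neg P[m_1,\ldots,m_k],
\]
so that $B$ is a $\Sigma^0_{k-i+1}$-formula in which $n_i$ does not occur free, while $n_i$ may occur in $A$. The claim then reads $\ha+\sdne{k+1}\vdash \forall n_i(A\vee B)\lequiv (\forall n_i A)\vee B$, i.e.\ that the outer $\forall n_i$ may be pulled inside, leaving $B$ untouched.

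First I would dispatch the direction $(\forall n_i A)\vee B\to \forall n_i(A\vee B)$, which is already a theorem of $\IQC$: it is the first assertion of Lemma~\ref{lem:mqdc} with the two disjuncts interchanged, together with the commutativity of $\vee$, and it needs no decidability. The opposite direction $\forall n_i(A\vee B)\to (\forall n_i A)\vee B$ is precisely an instance of \eqref{assert:uniin} of Fact~\ref{lem:int}, taking the $n_i$-free disjunct $B$ for $D'$ and letting $A$ be the quantified disjunct. By Fact~\ref{lem:int} this implication is derivable as soon as $B$ is decidable in $\ha+\sdne{k+1}$, i.e.\ as soon as $\ha+\sdne{k+1}\vdash B\vee\neg B$.

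It therefore remains to secure that decidability, and this is where the hypothesis that $i$ is a positive even number is used. Since $B\in\Sigma^0_{k-i+1}$, the instance $B\vee\neg B$ is an instance of $\slem{k-i+1}$. Because $i\ge 2$ we have $k-i+1\le k-1<k$, so the induction hypothesis on Assertion~\eqref{eq4} of Theorem~\ref{thm:main} applies at the index $k-i+1$ and yields $\sdne{k-i+2}\vdash_\ha \slem{k-i+1}$; and $\ha+\sdne{k+1}$ proves $\sdne{k-i+2}$ since $k-i+2\le k+1$ and $\sdne{m}$ follows from $\sdne{m'}$ for $m\le m'$. Hence $\ha+\sdne{k+1}\vdash\slem{k-i+1}$, $B$ is decidable, and the two directions combine to the asserted equivalence. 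There is no essential obstacle beyond this bookkeeping; the only point demanding care---and the sole divergence from Claim~\ref{claim:I}---is that the freed disjunct is now $\Sigma^0$, so one invokes $\slem{}$ directly rather than deriving decidability of a $\Pi^0$-disjunct through $\plem{}$ as in Claim~\ref{claim:I}.
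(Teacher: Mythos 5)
Your proposal is correct and follows essentially the same route as the paper: establish decidability of the $n_i$-free $\Sigma^0_{k-i+1}$-disjunct via $\sdne{k+1}\vdash\sdne{k-i+2}$ and the induction hypothesis on Assertion~\eqref{eq4} (giving $\slem{k-i+1}$), then move $\forall n_i$ across it using Lemma~\ref{lem:mqdc} and Fact~\ref{lem:int}. Your explicit split into the trivial $\IQC$ direction and the direction requiring \eqref{assert:uniin} is just a more detailed rendering of the paper's one-paragraph argument.
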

\begin{proof}In the left-hand side of the equivalence formula,
 we see that the second disjunct $\exists
 m_i \forall m_{i+1} \cdots \overline{Q} m_k.\ \neg P [m_1, \ldots, m_k]$ is
 a $\Sigma^0_{k-i+1}$-formula. It is decidable in $\ha+\sdne{k+1}$,
 because $\sdne{k+1}$ proves $\sdne{k-i+2}$ which proves $\slem{k-i+1}$
 by the induction hypothesis of Assertion~\eqref{eq4} of Theorem~\ref{thm:main}. 
 The decidable
 $\Sigma^0_{k-i+1}$-disjunct $\exists m_i \forall m_{i+1}  \cdots
 \overline{Q} m_k.\ \neg P [m_1, \ldots, m_k]$ does not contain a free
 variable $n_i$. So move the universal quantifier $\forall n_i$ over the decidable
 $\Sigma^0_{k-i+1}$-disjunct.  The resulting formula is the right-hand side
 of the equivalence formula. It is equivalent in $\ha+\sdne{k+1}$ to the left-hand side of
 the equivalence formula, by Lemma~\ref{lem:mqdc} and Fact~\ref{lem:int}.
\end{proof} 

We continue the proof of Assertion~\eqref{eq4} ``$\dne{k+1}\ \vdash_\ha \slem{k}$'' of Theorem~\ref{thm:main}.
To an instance~\eqref{slemk} of $\slemm{k}$, apply
Lemma~\ref{lem:mqdc}, Fact~\ref{lem:int}, Claim~\ref{claim:I} with
 $j=1$, and Claim~\ref{claim:II} with $i=2$. Next apply 
Lemma~\ref{lem:mqdc}, Fact~\ref{lem:int}, Claim~\ref{claim:I} with
$j=3$, and
 Claim~\ref{claim:II} with $i=4$. Then repeatedly apply them with $(i,j)=(5,6),(7,8),\ldots$.
 $\ldots$, in this order. Then a formula \eqref{slemk} is
 equivalent in $\ha+\sdne{k+1}$ to the $\Sigma^0_{k+1}$-formula~\eqref{eq:se}. But the
 formula \eqref{eq:se} is
 provable in  $\ha+\sdne{k+1}$  by Lemma~\ref{lem:b}. Hence every instance
\eqref{slemk} of $\slemm{k}$ is  provable in the system
 $\ha+\sdne{k+1}$. Thus the system $\ha+\sdne{k+1}$ proves $\slemm{k}$
 and thus $\slem{k}$ by Lemma~\ref{lem3.8}. This completes the proof of Assertion~\eqref{eq4}.

\bigskip  To prove  Assertion~\eqref{eq6} ``$\slem{k}\vdash_\IQC
 \sdne{k}$,'' let us assume $\dneg A$ with $A$ being a
 $\Sigma^0_k$-formula.
  By $\slem{k}$, we have $A\vee\neg A$. In case of $\neg A$, by the
  assumption $\dneg A$, we have contradiction, from which $A$
 follows. Hence we concludes $\dneg A\to
  A$.

\bigskip  To prove  Assertion~\eqref{eq7} ``$\plem{k+1}\vdash_\IQC
  \slem{k}$,'', note that any
  $\Sigma^0_k$-formula $B$ is  equivalent in $\IQC$ to a $\Pi^{0}_{k+1}$-formula $\forall n.\,
  B$ where the variable $n$ is
 fresh.  Because $\ha+\plem{k+1}$ proves
$\forall n.\,  B\;\vee \neg\forall n.\, B$, so does $B\vee\neg B$, an
 instance of  $\slem{k}$.

\bigskip We will prove Assertion~\eqref{eq8} ``$\sdne{k}$ is equivalent
in $\ha$ to $\fpdlem{k}$'' of Theorem~\ref{thm:main}. First we will
prove ``$\sdne{k}\vdash_\ha \fpdlem{k}$.'' Let us assume $\dne{k}$. Let
$P[n_1,\ldots, n_k]$ and $R[m_1, \ldots, m_k]$ be $\Sigma^0_0$-formulas
possibly containing free variables other than indicated variables.  Also
assume the following equivalence formula between a $\Sigma^0_k$-formula
and a $\Pi^0_k$-formula:
\begin{align}
&\exists n_1\forall n_2\cdots Q  n_{k-1} \Q  n_k.\, P[n_1,\ldots, n_k]\ \
 \lequiv\ \ \forall m_1\exists m_2\cdots \Q  m_{k-1} Q  m_k.\, R[m_1, \ldots, m_k],    \alabel{equiv:4}
\end{align}
We will derive the following disjunction of two $\Sigma^0_k$-formulas:
\begin{align}
\exists n_1\forall n_2\cdots Q  n_{k-1} \Q  n_k.\, P[n_1,\ldots, n_k]\
  \vee\ \ \exists m_1\forall m_2\cdots Q  m_{k-1} \Q  m_k.\, \left( R
[ m_1, \ldots, m_k]\right)^\perp.   \alabel{eq:5}
\end{align}

\begin{claim}
The disjunction \eqref{eq:5} is equivalent in $\ha+\dne{k}$ to a 
$\Sigma^0_k$-formula:
\begin{align}
  \alabel{eq:6}
 \exists n_1 \exists m_1\forall n_2\forall m_2\cdots \Q  n_k \Q  m_k (P[n_1, \ldots, n_k] \;\vee\;\neg R[m_1, \ldots, m_k]).
\end{align}
\end{claim}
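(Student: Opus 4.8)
The plan is to transform the disjunction \eqref{eq:5} into the single $\Sigma^0_k$-formula \eqref{eq:6} by pulling the quantifiers of the two disjuncts outward one block at a time, exactly as in the proof of Assertion~\eqref{eq4} of Theorem~\ref{thm:main}. The only structural difference is that here both disjuncts begin with $\exists$ and share the same alternation pattern, so the blocks align and the merged formula stays at level $\Sigma^0_k$ instead of climbing to $\Sigma^0_{k+1}$. As a first step I would replace the matrix $\left(R[m_1,\ldots,m_k]\right)^\perp$ of the second disjunct by $\neg R[m_1,\ldots,m_k]$: this is legitimate in $\ha$ because $R$ is a $\Sigma^0_0$-formula and Fact~\ref{fact:aho}~\eqref{0dec} gives $\ha\vdash R^\perp\lequiv\neg R$, and it also produces the matrix $P[\vec{n}]\vee\neg R[\vec{m}]$ that appears in \eqref{eq:6}.

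Next I would record the decidability facts that drive the quantifier shifts. Since $\dne{k}$ proves $\slem{k-1}$ by Assertion~\eqref{eq4} and $\slem{k-1}$ proves $\plem{k-1}$ by Assertion~\eqref{eq3}, in $\ha+\dne{k}$ every $\Sigma^0_j$- and every $\Pi^0_j$-formula with $j<k$ is decidable. With this in hand I would establish the equivalence of \eqref{eq:5} and \eqref{eq:6} as a chain of equivalences, peeling off one quantifier block (indexed by $j=1,2,\ldots,k$) at a time. An existential block ($j$ odd) combines by $\exists n_j\,F\vee\exists m_j\,G\lequiv\exists n_j\exists m_j\,(F\vee G)$, provable already in $\IQC$ via Lemma~\ref{lem:mqdc}(2), since the distinct variables $n_j,m_j$ do not occur in the opposite disjunct. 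A universal block ($j$ even) combines by $\forall n_j\,C\vee\forall m_j\,D\lequiv\forall n_j\forall m_j\,(C\vee D)$, which I would obtain by two successive applications of the move $\forall n(D'\vee B)\lequiv D'\vee\forall n B$, valid for decidable $D'$ with its nontrivial direction being Fact~\ref{lem:int}~\eqref{assert:uniin}: first move $\forall m_j$ over the $\Pi^0_{k-j+1}$-formula $\forall n_j\,C$, then move $\forall n_j$ over the $\Sigma^0_{k-j}$-formula $D$. Iterating threads the quantifiers into the order $\exists n_1\exists m_1\forall n_2\forall m_2\cdots\Q n_k\Q m_k$ and delivers exactly \eqref{eq:6}.

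The main obstacle, and the only place where $\dne{k}$ is genuinely used, is the converse of the universal-block equivalence: intuitionistically $\forall n_j\forall m_j\,(C\vee D)$ does not split back into $\forall n_j\,C\vee\forall m_j\,D$, and this splitting is precisely Fact~\ref{lem:int}~\eqref{assert:uniin}, which demands that the disjunct being passed over be decidable. The supporting obligation is therefore the complexity bookkeeping: at each even level $j$ the two disjuncts one moves across are of complexity $\Pi^0_{k-j+1}$ and $\Sigma^0_{k-j}$, both strictly below $\Sigma^0_k$ and hence decidable in $\ha+\dne{k}$ by the second paragraph, and the side condition of Lemma~\ref{lem:mqdc} and Fact~\ref{lem:int}~\eqref{assert:uniin} (the quantified variable not occurring free in the other disjunct) holds automatically because the $n$- and $m$-variables are kept disjoint. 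The existential blocks and the initial rewriting of $R^\perp$ require no classical input at all, so all use of $\dne{k}$ is confined to the universal blocks.
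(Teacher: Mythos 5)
Your proof is correct and follows essentially the same route as the paper's: pull the existential quantifiers out via Lemma~\ref{lem:mqdc}, and pull each universal quantifier over the opposite disjunct of complexity $\Pi^0_{k-j+1}$ or $\Sigma^0_{k-j}$, whose decidability in $\ha+\dne{k}$ comes from Assertions~\eqref{eq4} and \eqref{eq3} of Theorem~\ref{thm:main} and licenses the nontrivial direction via Fact~\ref{lem:int}~\eqref{assert:uniin}. The only differences are cosmetic: you make explicit the replacement of $R^\perp$ by $\neg R$ (via Fact~\ref{fact:aho}~\eqref{0dec}), which the paper glosses over, and you perform the two universal moves within each block in the opposite order, which is immaterial since the complexities encountered are identical.
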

\begin{proof}The claim is proved by Lemma~\ref{lem:mqdc}, in a similar argument as
 the Assertion~\eqref{eq4} of Theorem~\ref{thm:main} is. Since the
 $\Sigma^0_k$-formula \eqref{eq:6} is obtained from the
 disjunction~\eqref{eq:5} by moving the quantifiers $\exists
 n_{2i-1},\exists m_{2i-1},\forall n_{2i},\forall m_{2i}\
 (i=1,2,\ldots)$ out of the scope of the disjunction, the equivalence
 between (\ref{eq:5}) and (\ref{eq:6}) in $\ha+\dne{k}$ is established
 by showing that the movement of the quantifiers are safe. The
 existential quantifiers $\exists n_{2i-1},\exists m_{2i-1}$ are safely
 moved by Lemma~\ref{lem:mqdc}. Each quantifier $\forall n_{2i}$ is
 moved over a $\Pi^0_{k-2i+1}$-disjunct $\forall m_{2i}\exists
 m_{2i+1}\cdots \Q m_k \neg R$, and each quantifier $\forall m_{2i}$
 over a $\Sigma^0_{k-2i}$-disjunct $\exists n_{2i+1}\forall
 n_{2i+2}\cdots \Q m_k P$. Here the $\Pi^0_{k-2i+1}$-disjunct and the
 $\Sigma^0_{k-2i}$-disjunct are both decidable by
 Theorem~\ref{thm:main}. So each $\forall n_{2i}$ and $\forall m_{2i}$
 are safely moved. This completes the verification of the claim.\end{proof}

\medskip To complete the verification of Assertion~\eqref{eq8}
``$\sdne{k}\vdash_\ha \fpdlem{k}$,'' it is sufficient to show that the
$\Sigma^0_k$-formula (\ref{eq:6}) from the equivalence
formula~\eqref{equiv:4}, by using $\sdne{k}$.

In view of $\dne{k}$, we have only to derive the double negation of the
$\Sigma^0_k$-formula~\eqref{eq:6}. So assume the negation of the
$\Sigma^0_k$-formula~\eqref{eq:6}, that is,
\begin{displaymath}
\neg\exists n_1 \exists m_1\forall n_2\forall m_2\cdots \Q  n_k \Q  m_k (P[n_1,\ldots, n_k] \;\vee\;\left( R[m_1, \ldots, m_k])^\perp\right).
\end{displaymath}
It is equivalent in $\ha+\dne{k}$ to the dual
\begin{align}
  \alabel{eq:7}
  \forall n_1 \forall m_1\exists n_2\exists m_2\cdots Q  n_k Q  m_k \left( ( P[n_1,\ldots, n_k])^\perp \;\wedge\; R[m_1, \ldots, m_k]\right),
\end{align}
because $\neg\exists n_1\exists m_1$ is $\forall n_1\forall m_1\neg$,
and because $\slem{k-1}$ is available in $\ha+\slem{k}$. By Lemma~\ref{lem:mqdc}~(\thesecondmqdcassert) and (\thethirdmqdcassert), the $\Pi^0_k$-formula
 \eqref{eq:7} implies a conjunction of two $\Pi^0_k$-formulas.
\begin{align*}
\left(\forall n_1\exists n_2\cdots Q n_k.\ \neg
P[n_1,\ldots, n_k]\right)\quad\wedge\quad\left(\forall m_1\exists
 m_2\cdots Q m_k.\ R[m_1, \ldots, m_k].\right)
\end{align*}  By using assumption \eqref{equiv:4}, the second $\Pi^0_k$-conjunct implies the dual of
the first $\Pi^0_k$-conjunct. So the contradiction follows from Fact~\ref{fac}~({\theassertdualbot}). This
establishes $\dne{k}\vdash_\ha \fpdlem{k}$.

\medskip Next, we prove the converse $\fpdlem{k}\vdash_\ha
\dne{k}$. The axiom scheme $\fpdlem{k}$ has an instance
$\eqref{equiv:4}\to \eqref{eq:5}$ with the $\Sigma^0_0$-formula
$P[n_1,\cdots, n_k]$ being replaced by a false $\Sigma^0_0$-formula
$S(0)=0$. Hence $\ha+\fpdlem{k}$ proves an implication formula $\neg\forall m_1\exists
m _2\cdots \Q m_k.\, R\ \to\ \exists m_1\forall m _2\cdots Q m_k.\, \neg
R$.  So, we can derive $\dne{k}$ by using Modus Tolence if we can prove
an implication formula
\begin{align}
  \alabel{eq:8}
\dneg \exists m_1\forall m_2\cdots Q m_k.\, \neg R\ \to\   \neg\forall m_1\exists m _2\cdots \Q m_k.\, R.
\end{align}
To prove the formula~\eqref{eq:8}, we use a \emph{Gentzen-type sequent
 calculus} $G3$~(see Section~81 of  \cite{MR0051790}) for 
$\IQC$. By the left- and the
right-introduction rules of $\neg$, the $G3$-sequent (\ref{eq:8}) is inferred 
 from a $G3$-sequent
\begin{align*} \exists m_1\forall m_2\cdots Q m_k.\, \neg R,\ \forall m_1\exists m
 _2\cdots \Q m_k.\, R\ \to.\end{align*}
It does not contain the variable $m_1$ free, so it is inferred by the
left-introduction rule of $\exists$ from a sequent 
\begin{align*}
\forall m_2\cdots Q
m_k.\, \neg R,\ \forall m_1\exists m_2\cdots \Q m_k.\, R\
\to.\end{align*}
 It is
inferred by the left-introduction rule of $\forall$ from a $G3$-sequent
\begin{align*}\forall m_2\exists m_3\cdots Q m_k.\, \neg R,\ \exists m_2\forall
m_3\cdots \Q m_k.\, R\ \to.\end{align*}  By repeating this argument, the
$G3$-sequent~\eqref{eq:8} is inferred from a $G3$-sequent $\neg R, R\to$, which is
inferred from an axiom sequent $R\to R$ of $G3$.  This establishes
$\fpdlem{k}\vdash_\ha \dne{k}$, and thus Assertion~\eqref{eq8} of
Theorem~\ref{thm:main}.

\medskip
Assertion~\eqref{eq9} ``$\sdne{k}\vdash_\ha \dlem{k}$'' of Theorem~\ref{thm:main} is proved as follows: By 
 Assertion~\eqref{eq8} of Theorem~\ref{thm:main}, we have $\sdne{k}\vdash_{\ha}
 (A\lequiv B)\to (B\vee \dual{A})$ for any $\Pi^0_k$-formula $A$ and any
 $\Sigma^0_k$-formula $B$.  
By Fact~\ref{fac}~(\theassertdualneg), we have $\sdne{k}\vdash_{\ha}
(A\lequiv B)\to (B\vee \neg A)$.  Thus $\dne{k}\vdash_{\ha} \dlem{k}$. 
This completes the proof of Theorem~\ref{thm:main}.  \bigskip

\begin{remark}\label{rem:negdual}
In $\ha$, the axiom scheme $\dlem{k}$
is strictly weaker than the axiom scheme $\dne{k}$ for every positive integer $k$, according to
\cite{akama04:_arith_hierar_of_laws_of}. Hence there is a
$\Pi^0_k$-formula $A$ such that $\not\vdash_\ha \dual{A} \lequiv \neg
 A$. Otherwise,  by Theorem~\ref{thm:main}~\eqref{eq8}, axiom schemes
 $\dlem{k}$, $\fpdlem{k}$ and $\sdne{k}$ are equivalent over $\ha$.
\end{remark}

The axiom scheme $\sdne{k}$ has the following equivalent axiom schemes.
\begin{fact}\alabel{psdne}For $k\ge0$,
$\dne{k}$ is equivalent in $\IQC$ to  $\pdne{k+1}$.
\end{fact}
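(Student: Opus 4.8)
The plan is to establish the two implications $\pdne{k+1}\vdash_\IQC\dne{k}$ and $\dne{k}\vdash_\IQC\pdne{k+1}$ separately, each by a short syntactic argument that exploits the fact that inserting or deleting a vacuous leading universal quantifier moves a formula between the classes $\Sigma^0_k$ and $\Pi^0_{k+1}$ up to $\IQC$-provable equivalence.

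For the direction $\pdne{k+1}\vdash_\IQC\dne{k}$, I would argue exactly as in the proof of Assertion~\eqref{eq7}. Given any $\Sigma^0_k$-formula $A$, choose a variable $n$ not occurring in $A$; then $\forall n.\,A$ is a $\Pi^0_{k+1}$-formula and $\IQC\vdash A\lequiv\forall n.\,A$. Applying the instance of $\pdne{k+1}$ for $\forall n.\,A$ yields $\dneg\forall n.\,A\to\forall n.\,A$, and transporting along the equivalence $A\lequiv\forall n.\,A$ (hence $\dneg A\lequiv\dneg\forall n.\,A$) delivers $\dneg A\to A$, which is the desired instance of $\dne{k}$.

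For the converse $\dne{k}\vdash_\IQC\pdne{k+1}$, I would write an arbitrary $\Pi^0_{k+1}$-formula as $\forall n.\,C$ with $C$ a $\Sigma^0_k$-formula, and assume $\dneg\forall n.\,C$. To conclude $\forall n.\,C$ I fix $n$ and aim at $C$. Since $\IQC\vdash\forall n.\,C\to C$ by instantiation, monotonicity of $\dneg$ gives $\IQC\vdash\dneg\forall n.\,C\to\dneg C$, so $\dneg C$ holds. As $C$ is a $\Sigma^0_k$-formula (possibly with $n$ free), the corresponding instance of $\dne{k}$ yields $C$, and universal generalization over the eigenvariable $n$ produces $\forall n.\,C$.

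The argument is uniform in $k\ge0$ and uses no arithmetical axioms beyond $\IQC$. The only point needing care is the second direction: one must note that although $\dneg$ does not in general commute with $\forall$, the weak form $\dneg\forall n.\,C\to\dneg C$ is $\IQC$-provable, and that the free occurrence of the eigenvariable $n$ in $C$ is harmless because $\dne{k}$ is a schema closed under substitution of free variables. I do not expect a genuine obstacle here; both directions reduce to the single observation that a vacuous leading $\forall$ can be added or stripped without affecting the $\IQC$-provable double-negation behaviour of the formula.
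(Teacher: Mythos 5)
Your proposal is correct and follows essentially the same route as the paper: the direction $\dne{k}\vdash_\IQC\pdne{k+1}$ via $\dneg\forall n.\,C\to\dneg C$, the instance of $\dne{k}$, and generalization over the eigenvariable, and the converse by stripping a vacuously added universal quantifier. Your explicit remarks on the eigenvariable condition and on the schema being closed under free variables are just careful spellings-out of steps the paper leaves implicit.
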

\begin{proof} Let an $L_\pra$-formula $\forall n.\, A$
  be a $\Pi^0_{k+1}$-formula with $A$ being a
 $\Sigma^0_k$-formula. We can show
  $\vdash_\IQC \dneg\forall n.\,
 A\to\dneg A$. We have $\sdne{k}\vdash_\IQC \dneg A\to A$.
By Modus Tolence, we have $\dne{k}\vdash_\IQC
 \dneg\forall n.\, A\to A$, and thus  $\dne{k}\vdash_\IQC \dneg\forall n.\, A\to\forall n.\,
  A$. Hence $ \dne{k} \vdash_\IQC \pdne{k+1}$.
To prove the converse $\pdne{k+1}\ \vdash_\IQC \dne{k}$, let $A$ be any
  $\Sigma^0_k$-formula. For any fresh variable $l$, the formula $A$ is
  equivalent in $\IQC$ to a $\Pi^0_{k+1}$-formula $\forall l.\, A$. So
  an instance $\dneg\forall l.\, A\to \forall l.\, A$ of the axiom
  scheme $\pdne{k+1}$ proves in $\IQC$ an instance $\dneg A\to A$ of
  $\sdne{k}$.
\end{proof}

\subsection{Prenex Normal Form Theorem}\label{subsec:pnfthm}

We will introduce three sets of $L_\pra$-formulas such that   the three  correspond to $\Sigma^0_k$-, $\Pi^0_k$-, and
  $\Delta^0_k$-formulas of $\pra$, respectively.

\begin{definition}[$E_k, U_k, P_k$]For the language $L_\pra$, we define $E_k$-, $U_k$-, and
  $P_k$-formulas. 
\begin{enumerate}
\item Given an occurrence of a
  quantifier. If it is in a $\Sigma^0_0$-formula, then we do not assign
      the sign to it. Otherwise,
  \begin{enumerate}
  \item The sign of an occurrence $\exists$ in a formula $A$ is the sign of the subformula
    $\exists n.\, B$ starting with such $\exists$.
  \item The sign of an occurrence $\forall$ in a formula $A$ is the opposite of the sign of the subformula
    $\forall n.\, B$ starting with such $\forall$.
  \end{enumerate}
  
\item The \emph{degree} of a formula is the maximum number of nested
  quantifiers with alternating signs. Formulas of degree 0 are  exactly
      $\Sigma^0_0$-formulas. Clearly the degree is less than or equal to
      the number of occurrences of the quantifiers.

\item By a(n) $U_k$-($E_k$-)formula, we mean a formula of degree $k$ such that all the
  outermost quantifiers are negative~(positive). A $P_{k+1}$-formula is a
  propositional combination of $U_k$- and $E_k$-formulas. 
\end{enumerate}
\end{definition}

The Heyting arithmetic $\ha$ has the function symbols and the defining
 equations for a primitive recursive pairing $p:\Nset^2\to\Nset$ and
 primitive recursive, projection functions $p_0:\Nset\to\Nset$ and
 $p_1:\Nset\to\Nset$ such that $p_0 (p(l,m))=l$, $p_1 (p(l,m))=m$, and
 $p(p_0(n), p_1(n))=n$. It is fairy easy to verify the following fact:

\begin{fact}\label{fact:dupq}An $L_\pra$-formula  $\cdots(\cdots Q l Q
 m\cdots) (\cdots l \cdots m \cdots)\cdots$ is equivalent in $\ha$ to an
 $L_\pra$-formula $\cdots (\cdots Q n\cdots)(  \cdots  (p_0 n) \cdots
 (p_1 n) \cdots )\cdots$ for all $Q\in \{\forall, \exists\}$.
\end{fact}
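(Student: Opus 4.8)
The plan is to reduce the statement to one \emph{local} quantifier-contraction equivalence and then to lift it to the displayed formulas by the replacement theorem of $\IQC$. The two formulas in the statement agree everywhere except in a single subformula: the first contains a block $Q l\, Q m.\, D$, where $D$ collects the rest of the quantifier prefix together with the matrix in which $l$ and $m$ occur, and the second replaces this block by $Q n.\, D[p_0 n/l,\, p_1 n/m]$ for a fresh variable $n$. Since $\IQC$ proves $C \lequiv C'$ whenever $C'$ arises from $C$ by replacing a subformula with a provably equivalent one and no free variable is captured --- and here the two subformulas have the same free variables, while $n$ is chosen fresh --- it suffices to prove in $\ha$ the local equivalence
\[
Q l\, Q m.\, D \ \lequiv\ Q n.\, D[p_0 n/l,\, p_1 n/m] \qquad (Q \in \{\forall,\exists\}).
\]

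I would handle $Q = \forall$ first. For the forward direction, assuming $\forall l\,\forall m.\, D$, I fix an arbitrary $n$ and instantiate $l := p_0 n$ and $m := p_1 n$, obtaining $D[p_0 n/l,\, p_1 n/m]$ directly, and then generalize on $n$. For the converse, assuming $\forall n.\, D[p_0 n/l,\, p_1 n/m]$, I fix arbitrary $l, m$ and instantiate $n := p(l,m)$; this yields $D[p_0(p(l,m))/l,\, p_1(p(l,m))/m]$, and the defining equations $p_0(p(l,m)) = l$ and $p_1(p(l,m)) = m$, via the equality axioms, rewrite it to $D[l,m]$, so that $\forall l\,\forall m.\, D$ follows.

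The case $Q = \exists$ is dual. From the left, I take witnesses $l, m$ for $\exists l\,\exists m.\, D$, put $n := p(l,m)$, and use the same two projection equations to convert $D[l,m]$ into $D[p_0 n/l,\, p_1 n/m]$, giving $\exists n.\, D[p_0 n/l,\, p_1 n/m]$. From the right, a witness $n$ for $\exists n.\, D[p_0 n/l,\, p_1 n/m]$ yields the witness pair $l := p_0 n$, $m := p_1 n$ for $\exists l\,\exists m.\, D$ with no appeal to any equation.

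I expect no real obstacle here, since the argument is pure bookkeeping; the two points that need a moment's care are ensuring the substitutions cause no variable capture --- guaranteed by taking $n$ fresh, so the surrounding quantifier prefix binds exactly the same free variables before and after --- and observing that only the two projection equations $p_0(p(l,m)) = l$ and $p_1(p(l,m)) = m$ are used, the surjectivity equation $p(p_0 n, p_1 n) = n$ playing no role. Finally, iterating this local contraction over a maximal block of like quantifiers reduces any such block to a single quantifier, which is exactly the bookkeeping needed for the quantifier counts in the prenex normal form theorem.
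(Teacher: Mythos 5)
Your proof is correct. The paper actually gives no proof of this fact (it is introduced with ``It is fairly easy to verify the following fact''), and your argument --- the local contraction $Q l\, Q m.\, D \lequiv Q n.\, D[p_0 n/l,\, p_1 n/m]$ established from the projection equations $p_0(p(l,m))=l$ and $p_1(p(l,m))=m$, lifted to arbitrary contexts by the replacement property of $\IQC$ and iterated over blocks of like quantifiers --- is exactly the standard verification the paper alludes to, including the correct observation that the surjectivity equation $p(p_0 n, p_1 n)=n$ is never needed.
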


\begin{theorem}\label{thm:ueps} For any $U^0_k$-($E^0_k$-)formula $A$, we can
 find a $\Pi^0_k$-$(\Sigma^0_k$-, resp.)formula $\hat A$ which is
 equivalent in $\ha + \slem{k}$ to $A$. 
 \end{theorem}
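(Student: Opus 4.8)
The plan is to prove the two assertions simultaneously by induction on $k$: every $U_k$-formula is equivalent in $\ha+\slem{k}$ to a $\Pi^0_k$-formula, and every $E_k$-formula is equivalent in $\ha+\slem{k}$ to a $\Sigma^0_k$-formula. The base case $k=0$ is immediate, since a formula of \emph{degree} $0$ is a $\Sigma^0_0$-formula, which is at once a $\Sigma^0_0$- and a $\Pi^0_0$-formula, so $\hat A=A$ works. For the inductive step I would first record that in $\ha+\slem{k}$ every $\Sigma^0_j$- and $\Pi^0_j$-formula with $j\le k$ is decidable: from $\slem{k}$ one obtains $\plem{k}$ by Theorem~\ref{thm:main}~\eqref{eq3}, and $\plem{k}\vdash_\ha\slem{k-1}$ by Theorem~\ref{thm:main}~\eqref{eq7}, so iterating yields $\slem{j}$ and $\plem{j}$ for every $j\le k$; decidability of propositional combinations then follows from Fact~\ref{dec}. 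In particular, by the induction hypothesis on $k$, every $P_k$-formula is equivalent to a propositional combination of $\Sigma^0_{k-1}$- and $\Pi^0_{k-1}$-formulas, and hence is decidable in $\ha+\slem{k}$.

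With decidability in hand, I would run a secondary induction on the structure of the given $U_k$- (or $E_k$-)formula $A$, pushing all quantifiers outward one connective at a time. The governing principle is that a quantifier may be moved across a subformula $D$ that does not contain the bound variable free, provided $D$ is decidable: the moves $\exists n(D\circ B)\lequiv D\circ\exists n B$ for $\circ=\vee,\wedge$ and $\forall n(D\wedge B)\lequiv D\wedge\forall n B$ are pure $\IQC$ by Lemma~\ref{lem:mqdc}, while $\forall n(D\vee B)\lequiv D\vee\forall n B$ and $(D\to B)\lequiv(\neg D\vee B)$ require the decidability of $D$ and are then supplied by Fact~\ref{lem:int}. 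At a node $A_1\circ A_2$ I would first convert $A_1$ and $A_2$, which have degree at most $k$, to prenex $\Sigma^0_j$- or $\Pi^0_j$-forms with $j\le k$ by the induction hypotheses (on $k$ and on structure), note that each such form is decidable in $\ha+\slem{k}$, and then extract the two quantifier prefixes in turn. An antecedent $B$ of an implication is likewise first put in prenex form, after which $B\to C$ is rewritten as $\dual B\vee C$ using that $\neg B\lequiv\dual B$ for the now-decidable $B$ (the second assertion of Fact~\ref{fac}); a negation $\neg B$ is treated the same way.

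Once all quantifiers are outside, the matrix is a propositional combination of $\Sigma^0_0$-formulas, hence again $\Sigma^0_0$, and it remains to normalize the prefix. Adjacent quantifiers of the same kind are merged into a single quantifier by the pairing of Fact~\ref{fact:dupq}, and vacuous quantifiers are inserted where needed, so that the prefix becomes strictly alternating. The essential point, which I expect to be the main obstacle, is to verify that after this merging the number of alternations is exactly the \emph{degree} $k$ of $A$, with the leading quantifier being $\forall$ for a $U_k$-formula (respectively $\exists$ for an $E_k$-formula): this is precisely what the sign convention in the definition of degree was designed to track, since a quantifier receives a negative sign exactly when it will surface as $\forall$ and a positive sign exactly when it will surface as $\exists$, so that the maximal alternating chain of signed quantifiers governs the length and the leading quantifier of the resulting prefix. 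Granting this, $A$ is equivalent in $\ha+\slem{k}$ to a $\Pi^0_k$-formula (respectively a $\Sigma^0_k$-formula), which completes the induction.
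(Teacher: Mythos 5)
Your overall strategy coincides with the paper's: structural induction, decidability of $\Sigma^0_j$- and $\Pi^0_j$-formulas for $j\le k$ extracted from Theorem~\ref{thm:main}, quantifier movement licensed by Lemma~\ref{lem:mqdc} and Fact~\ref{lem:int}, implications and negations reduced to disjunctions via the dual and Fact~\ref{fac}, and contraction of quantifier blocks by the pairing of Fact~\ref{fact:dupq}. But there is a genuine gap at precisely the point you flag as ``the main obstacle'': you do not prove that the resulting prefix has only $k$ alternations, you ``grant'' it, and the appeal to what the sign convention ``was designed to track'' is not an argument. Worse, the procedure as you describe it need not deliver the bound at all. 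If ``extract the two quantifier prefixes in turn'' means extracting one prefix completely and then the other, then from two $\Pi^0_k$-subformulas $\forall l\exists n\cdots$ and $\forall m\exists n'\cdots$ you obtain a prefix of the shape $\forall l\exists n\cdots\forall m\exists n'\cdots$ with up to $2k$ alternations. Merging \emph{adjacent} quantifiers of the same kind --- the only operation Fact~\ref{fact:dupq} provides --- cannot reduce this to $k$, and you may not reorder the prefix afterwards: $\forall m\exists n\,P\to\exists n\forall m\,P$ is not valid, and no decidability assumption on the matrix helps, since the quantifier-movement lemmas only move a quantifier across a formula in which its variable is not free, never across another quantifier. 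So the output of your procedure is a correct prenex form, but of complexity $\Pi^0_{2k}$ rather than $\Pi^0_k$, and the theorem's content is exactly the sharper bound.

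What closes the gap is organizing the extraction \emph{blockwise}, and this is how the paper's proof is structured. At a node $B_1\circ B_2$ (say both $U_k$, $\circ\in\{\vee,\wedge\}$), first prenexify the subformulas into $\hat B_1=\forall l.\,M_1 l$ and $\hat B_2=\forall m.\,M_2 m$ with $M_1,M_2$ being $\Sigma^0_{k-1}$; pull out \emph{only} the two outermost universal blocks, obtaining $\forall l\forall m\,(M_1 l\circ M_2 m)$; observe that $M_1 l\circ M_2 m$ is an $E_{k-1}$-formula; and apply the induction hypothesis (legitimate since $\slem{k}\vdash_\ha\slem{k-1}$) to \emph{that} formula to get a single $\Sigma^0_{k-1}$-formula $\hat D\,l\,m$, so that $\forall l\forall m\,\hat D\,l\,m$ collapses to a $\Pi^0_k$-formula by pairing. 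The level-by-level interleaving of the two prefixes --- hence the bound $k$ on alternations --- is produced automatically by this recursion; nothing has to be granted. Your proof becomes correct once ``extract the prefixes in turn'' is replaced by this strip-one-block-and-recurse scheme (or by an explicitly interleaved extraction together with an induction showing the alternation count is bounded by the maximum of the degrees of the subformulas), which is exactly the bookkeeping your sketch omits.
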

\begin{proof}
The proof is by induction on the structure of $A$.  When $k=0$, we can take $A$ as
$\hat A$ because $A$ is a $\Sigma^0_0$-formula.  Assume $k>0$. Then $A$ is not a prime formula.
 The rest of the proof proceeds by cases according to the form of the formula $A$.

Case~1. $A$ is $B_1 \circ B_2$ with $\circ=\vee,\wedge,\to$

\medskip
 Subcase 1.1 $\circ=\vee,\wedge$. Then $B_1$ and $B_2$ are both
$U^0_k$-($E^0_k$-)formulas. We can use the induction hypotheses  to find two
$\Pi^0_k$-($\Sigma^0_k$-)formulas $\hat B_1$ and $\hat B_2$ which are
equivalent in $\ha + \slem{k}$ to $B_1$ and $B_2$ respectively.

When $A$ is a $U^0_k$-formula, then the $\Pi^0_{k}$-formulas $\hat B_1$ and $\hat B_2$ are
 $\forall l.\, M_1 l$ and $\forall m.\, M_2 m$ for some
$\Sigma^0_{k-1}$-formulas $M_1 l$ and $M_2 m$. Here $M_1 l$ and
$\hat B_2 m$ are both decidable in $\ha + \slem{k}$ because the system $\ha
     + \slem{k}$ proves $\slem{k-1}$ and $\plem{k}$ by 
     Theorem~\ref{thm:main}. So by Lemma~\ref{lem:mqdc} and
 Fact~\ref{lem:int} imply
\begin{align*}\ha + \slem{k}\vdash
A\lequiv \hat B_1\circ\hat B_2\lequiv \forall l(M_1 l\circ \forall
m M_2  m)\lequiv \forall l\forall m(M_1 l\circ M_2  m).
\end{align*} 
Here $M_1 l\circ M_2 m$ is an $E_{k-1}$-formula. By $\slem{k}\vdash_\ha
     \slem{k-1}$, we can use the induction hypothesis to find a
     $\Sigma^0_{k-1}$-formula $\hat D l m$ which is equivalent in $\ha +
     \slem{k}$ to the $E_{k-1}$-formula $M_1 l\circ M_2 m$. So, in $\ha
     + \slem{k}$, the $U^0_k$-formula $A$ is equivalent to $\forall
     l\forall m.\, \hat D l m$ which is equivalent in $\ha + \slem{k}$
     to a $\Pi^0_k$-formula.

When $A$ is an $E^0 _k$-formula, the proof proceeds as in the
 case $A$ is a $U^0_k$-formula.

\medskip Subcase~1.2 $\circ=\to$. Then $B_1$ is an
 $E^0_k$-($U^0_k$-)formula, while $B_2$ is an $U^0_k$-($E^0_k$-)formula.
 We can use the induction hypotheses to find a
 $\Sigma^0_k$-($\Pi^0_k$-)formula $\hat B_1$ and a
 $\Pi^0_k$-($\Sigma^0_k$-)formula $\hat B_2$ such that $\ha +
 \slem{k}\vdash (B_1\lequiv \hat B_1)\wedge (B_2\lequiv \hat
 B_2)$. By Lemma~\ref{lem3.8} and Fact~\ref{fac}~(\theassertdualneg),
 $\ha + \slem{k}\vdash \neg \hat
 B_1\to (\hat B_1)^\bot$. On the other hand, we can show $\IQC \vdash ( \hat
 B_1)^\perp \to \neg \hat B_1$ by using the sequent calculus
 $G3$ for $\IQC$. Hence $\ha + \slem{k}\vdash (\hat B_1)^\perp \lequiv \neg
\hat B_1$.
In $\ha + \slem{k}$, the
 $\Sigma^0_k$-($\Pi^0_k$-)formula $\hat B_1$ is decidable, and thus 
 $(\hat B_1\to \hat B_2)\stackrel{Fact~\ref{lem:int}}{\lequiv} \neg
 \hat B_1\vee \hat B_2 \lequiv (\hat B_1)^\perp\vee \hat B_2$. The two
 disjuncts $(\hat B_1)^\perp$ and $\hat B_2$ are both
 $\Pi^0_k$-($\Sigma^0_k$-)formulas  decidable in
 $\ha+\sdne{k}$. Moreover, each subformula of $(\hat B_1)^\perp$ and
 $\hat B_2$ is so. Hence by Lemma~\ref{lem:mqdc}, Fact~\ref{lem:int} and
 Fact~\ref{fact:dupq}, the formula $(\hat B_1)^\perp\vee \hat B_2$ is
 equivalent in $\ha+\slem{k}$ to a $\Pi^0_k$-($\Sigma^0_k$-)formula.

\bigskip
Case 2. $A$ is $\forall n.\, B[n]\ (\exists n.\, B[n])$. 

Assume $B[n]$ is a $U^0_k$-($E^0_k$-)formula. Then we can find by the induction
hypothesis a $\Pi^0_k$-($\Sigma^0_k$-)formula $\hat B[n]$ which is
equivalent in $\ha+\slem{k}$ to $B[n]$. So, in
$\ha+\slem{k}$, the formula $A$ is equivalent  to $\forall n.\, \hat
 B[n]$ ($\exists n.\, \hat B[n]$), which is equivalent to some $\Pi^0_k$-($\Sigma^0_k$)-formula by
Fact~\ref{fact:dupq}.

Otherwise, $B[n]$ is an $E^0_{k-1}$-($U^0_{k-1}$-)formula. By the
induction hypothesis, we can find a
$\Sigma^0_{k-1}$-($\Pi^0_{k-1}$-)formula $\hat B[n]$ which is equivalent
in $\ha+\slem{k-1}$ to $B[n]$. So,  in $\ha+\slem{k}$, the formula $A$ is equivalent to
$\forall n.\, \hat B[n]$ ($\exists n.\, \hat B[n]$).

\medskip
Case 3. $A$ is $\neg B$. The same argument as Subcase 1.2.
\end{proof}

Here we will prove a slightly stronger version of Theorem~\ref{pnfthm}.
\begin{corollary}\label{cor:1}
For any $P^0_{k+1}$-formula $A$, we can find a $\Pi^0_{k+1}$-formula
 $\hat B$ and a $\Sigma^0_{k+1}$-formula $\hat C$ such that 
$\ha+\slem{k}\vdash A\lequiv \hat B\lequiv \hat C$. Here the number of
 occurrences of quantifiers in $\hat B$ and that of $\hat C$ are less than or equal to that of
 $A$.
\end{corollary}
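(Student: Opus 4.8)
The plan is to prove Corollary~\ref{cor:1} by structural induction on the $P^0_{k+1}$-formula $A$, using Theorem~\ref{thm:ueps} to handle the atoms of the propositional combination and then collapsing the resulting prenex forms. By definition a $P^0_{k+1}$-formula is a propositional combination of $U^0_k$- and $E^0_k$-formulas. First I would apply Theorem~\ref{thm:ueps} to each such constituent: every $U^0_k$-subformula becomes $\ha+\slem{k}$-equivalent to a $\Pi^0_k$-formula, and every $E^0_k$-subformula to a $\Sigma^0_k$-formula, with the number of quantifier occurrences not increasing. Thus $A$ is $\ha+\slem{k}$-equivalent to a propositional combination $A^\ast$ of $\Pi^0_k$- and $\Sigma^0_k$-formulas. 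Since $A^\ast$ has all its constituents of quantifier-complexity $\le k$, each constituent is decidable in $\ha+\slem{k}$ by Theorem~\ref{thm:main} (recall $\ha+\slem{k}$ proves both $\slem{k}$ and $\plem{k}$, hence $\dlem{k}$ and the decidability of both $\Sigma^0_k$- and $\Pi^0_k$-formulas).

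The core of the argument is to push the quantifiers of $A^\ast$ outward to produce a single prenex formula. Because every propositional connective in $A^\ast$ combines decidable formulas, I can repeatedly invoke Lemma~\ref{lem:mqdc} and Fact~\ref{lem:int} exactly as in Subcases~1.1 and~1.2 of the proof of Theorem~\ref{thm:ueps}: a $\forall$ distributes over a decidable disjunct, an $\exists$ over any conjunct or disjunct, an implication $D_1\to D_2$ with $D_1$ decidable rewrites to $\neg D_1\vee D_2\lequiv (D_1)^\perp\vee D_2$ (using Lemma~\ref{lem3.8} and Fact~\ref{fac}), and $\neg$ is absorbed by the dual, which by Fact~\ref{fact:aho}~\eqref{1dec} sends $\Sigma^0_k$ to $\Pi^0_k$ and vice versa. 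After all connectives are eliminated we obtain a prenex formula whose quantifier-free matrix is a $\Sigma^0_0$-combination, and whose quantifier block has depth at most $k+1$; contracting same-type adjacent quantifiers by Fact~\ref{fact:dupq} collapses it to a genuine $\Sigma^0_{k+1}$- or $\Pi^0_{k+1}$-form. Running this outward-movement once so that the outermost quantifier is universal yields the $\Pi^0_{k+1}$-formula $\hat B$, and once so that it is existential yields the $\Sigma^0_{k+1}$-formula $\hat C$; the two are $\ha+\slem{k}$-equivalent to $A$ and hence to each other, giving $\ha+\slem{k}\vdash A\lequiv \hat B\lequiv \hat C$.

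To control the quantifier count, I would observe that each outward move is a logical rewrite that neither duplicates nor creates quantifier occurrences — distribution over a decidable formula and the duality operation preserve the total number of quantifiers, while the pairing contraction of Fact~\ref{fact:dupq} only merges two quantifiers into one, never increasing the count. Since Theorem~\ref{thm:ueps} already produced constituents with no more quantifiers than their $U^0_k$-/$E^0_k$-originals, the final $\hat B$ and $\hat C$ inherit the bound that their quantifier occurrences are $\le$ those of $A$.

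The main obstacle I expect is the bookkeeping needed to guarantee that \emph{every} intermediate subformula encountered during the outward movement remains decidable in $\ha+\slem{k}$, not merely the top-level constituents; this is what licenses the repeated use of Fact~\ref{lem:int}, whose hypotheses demand a decidable side formula at each step. The duals $(D_1)^\perp$ and the $\Sigma^0_k$/$\Pi^0_k$ constituents are decidable by Theorem~\ref{thm:main}, and decidability is closed under $\neg,\wedge,\vee,\to$ and bounded quantification by Fact~\ref{dec}, so an auxiliary induction confirming that each subformula of $\hat B$ and $\hat C$ is decidable in $\ha+\slem{k}$ — exactly the remark ``each subformula of $(\hat B_1)^\perp$ and $\hat B_2$ is so'' in Subcase~1.2 — closes the gap and completes the proof.
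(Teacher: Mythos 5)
Your proposal is correct and follows essentially the same route as the paper's proof: apply Theorem~\ref{thm:ueps} to the $U^0_k$-/$E^0_k$-constituents, then move quantifiers outward over the now-decidable $\Pi^0_k$-/$\Sigma^0_k$-pieces via Lemma~\ref{lem:mqdc} and Fact~\ref{lem:int} (with implications and negations handled through the dual), and finally contract adjacent like quantifiers by Fact~\ref{fact:dupq}, running the procedure once each way to obtain $\hat B$ and $\hat C$. Your explicit bookkeeping of the quantifier count and of the decidability of intermediate subformulas makes precise what the paper leaves implicit, but it is the same argument.
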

\begin{proof} By Theorem~\ref{thm:ueps}, the $P^0_{k+1}$-formula $A$ is equivalent in $\ha +
 \slem{k}$ to a propositional combination $A^\circ$ of
 $\Pi^0_k$-formulas and $\Sigma^0_k$-formulas. In the formula $A^\circ$,
 move (0) all the outermost  quantifiers of positive sign, out of all the
 propositional connectives, (1) all the outermost quantifiers of
 \emph{negative} sign, out of all the propositional connectives, (2) all the
 outermost quantifiers of \emph{positive} sign, out of all the propositional
 connectives, (3) all the outermost quantifiers of \emph{negative} sign, out of
 all the propositional connectives, $\ldots$. 
The resulting formula $C$ is a block of quantifiers followed by a
 $\Sigma^0_0$-formula where the block has at most $k+1$ alternations of
 quantifiers~(e.g. If $A$ is a $P^0_2$-formula $\forall x P x \wedge
 (\exists y P' y \to \exists z P''z)$ with $P, P', P''$ being $\Sigma^0_0$-formulas, then
 $A^\circ$ is $\exists  z \forall x y (P x \wedge (P' y \to P'' z$)) which has 2
 alternations of quantifiers). 
All the $\Pi^0_k$- and all
 the $\Sigma^0_k$-formulas are $(\ha+\slem{k})$-decidable. So, by
 Lemma~\ref{lem:mqdc} and Fact~\ref{lem:int}, the formula $C$
 is equivalent in $\ha+\slem{k}$ to the $P^0_{k+1}$-formula $A$. By
 Fact~\ref{fact:dupq}, the resulting formula is equivalent in
 $\ha+\slem{k}$ to a $\Sigma^0_{k+1}$-formula $\hat C$. In a similar
 way, the $P^0_{k+1}$-formula $A$ is equivalent in $\ha+\slem{k}$ to a
 $\Pi^0_{k+1}$-formula $\hat B$.
\end{proof}

\section{Iterated Autonomous Limiting PCAs}\alabel{sec:limitingPCA}

We recall \emph{autonomous limiting} \textsc{pca}s~\citep{MR2030297}. The
construction was based on the Fr\'echet filter
on $\Nset$, and is similar to but easier than the constructions of \emph{recursive
ultrapower}~\citep{MR0381969} and then  semi-ring made from recursive
functions modulo co-$r$-maximal sets~\citep{MR0265157}.  

 We say a partial numeric function $\varphi(n_1,\ldots,n_k)$
is \emph{guessed} by a partial numeric function   
$\xi(t,n_1,\ldots,n_k)$ as $t$ goes to infinity, provided that 
$\forall
  n_1,\ldots,n_k
  \exists t_0\forall t>t_0.\ \varphi(n_1,\ldots,n_k)\simeq
  \xi (t, n_1,\ldots,n_k)$. Here, the relation $\simeq$ means ``if one side is
  defined, then the other side is defined with the same value.'' In this case, we write 
$\varphi (n_1,\ldots,n_k)\simeq
  \lim_t \xi(t, n_1,\ldots,n_k)$. On the other hand, the symbol `='
  means both sides are defined with the same value.
  For every class $\mathcal{F}$ of partial numeric functions, 
  $\lim(\mathcal{F})$ denotes the set of partial numeric functions guessed
  by a partial numeric function in $\mathcal{F}$.

A partial combinatory algebra~(\textsc{pca} for short) is a partial algebra
$\A$ equipped with two distinct
constants $\k,\s$ and a partial binary operation
``application'' $(-)\cdot(\bullet)$ subject to 
$ (\k\cdot a)\cdot b = a$, $((\s \cdot a)\cdot b )\cdot z \simeq (a
\cdot c)\cdot (b\cdot c)$, and $(\s\cdot a)\cdot b$ is defined. 
We introduce the standard convention of associating the application to
the left and writing $a b$ instead of $a\cdot b$, omitting parentheses
whenever no confusion occurs. If $a\cdot
b$ is defined then both of $a$ and $b$ are defined.

\emph{The $0$-th Church numeral} of $\A$ is an element $\k\; (\s
\; \k \; \k)$
of $\A$.  \emph{The $(n+1)$-th Church numeral} of $\A$ is an element $\s\;
( \s \; (\k\; \s )\; \k )\; \num{n}$ of $\A$.  By definition, for each
natural number $n$, an element $\num{n}$ of $\A$
\emph{represents} $n$,  and an element $a$ of $\A$
\emph{represents} itself.
We say a
partial function
$\varphi$ from $M_1\times M_2\times \cdots\times M_k$ to $M_0$
is represented by an element $a$ of $\A$, whenever 
 $\varphi(x_1,\ldots, x_k)=x_0$ if and only if for all representatives $a_i\in\A$ of
 $x_i$ ($1\leq i\leq k$),
$a\;a_1\;\cdots\;a_{k-1}\; a_k$ is defined and is a representative of $x_0$. 
The set of $\A$-representable partial
functions from  $M$ to $M'$ is denoted by $M{\rightharpoonup_{\A}}\; M'$.
Each partial recursive function is representable in any \textsc{pca}.

Let $\sim $ be the partial equivalence relation on $\A$ such that $a\sim
b$ if and only if
$a\; \num{t}= b\; \num{t}$ for all but finitely many
natural numbers $t$.  A quotient structure
$(\Nset{\rightharpoonup_{\A}}\;\A)/\sim$ will be a \textsc{pca} by the
argument-wise application operation modulo $\sim$. More precisely, let
$\yohji{a}$ be $\{b\in \A\;|\; b\sim a\}$. Then the set
$\{\yohji{a}\;|\; a\in\A\ \mbox{and}\ a \sim a\}$, 
$\bm{k}:=\yohji{\k\; \k}$, $\bm{s}:=\yohji{\k\; \s}$ and the following
operation $ \yohji{a}\ast\yohji{b} \simeq \yohji{\s\; a\; b}$
defines a \textsc{pca}. We denote it by $\plim(\A)$.

By a \emph{homomorphism} from a \textsc{pca} $\A$ to a \textsc{pca}
$\B$, we mean a function from $\A$ to $\B$ such that $f(\k)=\k$,
$f(\s)=\s$, and
$f(a)\; f( b)\simeq f(a\; b)$ for all $a,b\in \A$.
A homomorphism   fits in with a
``\emph{strict, total
homomorphism} between \textsc{pca}s''~(see p.~23 of \cite{Hofstra:2010:UTU:1857260.1857303}).
A \emph{canonical injection} of a \textsc{pca} $\A$ is, by definition, an injective homomorphism
$\iota_\A :\A\to\plim(\A)\;;\; x\mapsto
\yohji{\k\; x}$.

\begin{fact}\label{fact:caninj}
 $\iota_{\A}$ is indeed an injective homomorphism for every \textsc{pca}
 $\A$.
\end{fact}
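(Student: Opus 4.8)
The plan is to check in turn the three defining conditions of a homomorphism of \textsc{pca}s together with injectivity, the only delicate point being the bookkeeping of definedness in the partial setting. I would begin by confirming that $\iota_\A$ actually takes values in $\plim(\A)$: for any $x\in\A$ the element $\k\,x$ is defined (the equation $(\k\cdot a)\cdot b=a$ is stated with $=$, which forces $\k\cdot a$ to be defined), and $(\k\,x)\,\num{t}=x$ for every $t$, so $\k\,x$ is $\sim$-related to itself and $\yohji{\k\,x}$ is a genuine element of $\plim(\A)$.

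The two constant conditions are immediate from the definitions, since $\iota_\A(\k)=\yohji{\k\,\k}=\bm{k}$ and $\iota_\A(\s)=\yohji{\k\,\s}=\bm{s}$. For the multiplicative condition I would compute $\iota_\A(a)\ast\iota_\A(b)=\yohji{\k\,a}\ast\yohji{\k\,b}\simeq\yohji{\s\,(\k\,a)\,(\k\,b)}$ and compare with $\iota_\A(a\,b)=\yohji{\k\,(a\,b)}$. Evaluating the representative $\s\,(\k\,a)\,(\k\,b)$ at an arbitrary numeral and applying the $\s$- and $\k$-axioms gives $(\s\,(\k\,a)\,(\k\,b))\,\num{t}\simeq(\k\,a\,\num{t})(\k\,b\,\num{t})=a\,b=(\k\,(a\,b))\,\num{t}$, so the two representatives agree at every numeral.

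The main subtlety, which is where I expect the only real care to be needed, is reconciling definedness across the $\simeq$ in the multiplicative clause. I would argue by cases: if $a\,b$ is defined, then $(\s\,(\k\,a)\,(\k\,b))\,\num{t}=a\,b$ is defined for all $t$, so the class $\yohji{\s\,(\k\,a)\,(\k\,b)}$ is a legitimate point of $\plim(\A)$ and coincides with $\iota_\A(a\,b)$; if $a\,b$ is undefined, then that representative is nowhere defined, fails $\sim$-reflexivity, and so $\iota_\A(a)\ast\iota_\A(b)$ is undefined in $\plim(\A)$, matching the undefinedness of $\iota_\A(a\,b)$. In either case the $\simeq$ holds. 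Finally, injectivity follows from the same numeral evaluation: if $\yohji{\k\,x}=\yohji{\k\,y}$ then $(\k\,x)\,\num{t}=(\k\,y)\,\num{t}$ for cofinitely many $t$, and since these values are $x$ and $y$ respectively, $x=y$.
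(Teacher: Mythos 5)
Your proof is correct and follows essentially the same route as the paper's: totality and injectivity via evaluation of $\k\,x$ at numerals, and the multiplicative clause by comparing $\s\,(\k\,a)\,(\k\,b)$ with $\k\,(a\,b)$ at every numeral, with the definedness bookkeeping handled in both directions. Your case split on whether $a\,b$ is defined is just a reorganization of the paper's two implications, so there is nothing substantive to distinguish the two arguments.
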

\begin{proof} We can see that $\iota_\A$ 
 is indeed a function from $\A$ to $\plim(\A)$. 
In other words,
 $\iota_\A$ is ``total'' in a sense of
 \cite{Hofstra:2010:UTU:1857260.1857303}. It is proved as follows:
 For every $x\in \A$, we have $\k\;  x \; \overline{t} =
\k \; x \; \overline{t}$ for every $t\in\Nset$. This implies
$\k\; x\sim \k\; x$, from which $\iota_\A(x)=\yohji{\k\;x}$ is in
$\plim(\A)$. The function $\iota_\A$ is injective, because
$\iota_\A(x)=\iota_\A(y)$ implies $\k\; x\; \num{t} = \k\; y\;\num{t}$
for all but finitely many natural numbers $t$, from which $x= \k\; x\;
\num{t} = \k\; y\;
\num{t}  = y$ holds for some natural number $t$. 

It holds that (i) the injection
$\iota_A$ maps the intrinsic constants $\k,\s$ of the \textsc{pca} $\A$
to $\bm{k},\bm{s}$ of the \textsc{pca} $\plim(\A)$, and (ii) $\iota_\A(a)\;
\iota_\A(b)\simeq \iota_\A(a\; b)$. In other words, the injection
$\iota_\A$ is ``strict'' in a sense of
\cite{Hofstra:2010:UTU:1857260.1857303}. The Assertion~(i) is clear by
the definition. As for the Assertion~(ii),
we can prove that if $\iota_\A(a\; b)$ is defined then $\iota_\A(a)\; \iota_\A(b)$ is defined with the
same value. The proof is as follows: By the premise, $a\; b$
is defined. Because $\k\; (a\; b)\; \overline{t}= (a\; b)
=\s\; (\k \; a)\; (\k\;  b)\;
\overline{t} $ for all $t\in\Nset$, we have 
\begin{align}\iota_\A(a)\;
\iota_\A(b)\simeq \yohji{\s\;(\k\; a)\; (\k\; b)} \simeq \yohji{\k \; (a \; b)}\simeq
\iota_\A(a\; b). \label{uum}
\end{align}
We can prove that if $\iota_\A(a)\; \iota_\A(b)$ is defined then  $\iota_\A(a b)$  is defined with the
same value. The proof is as follows: By the premise, $\yohji{\s\; (\k\; 
a)\;  (\k \;  b)}$ is defined. So $\s\; (\k\; 
a)\;  (\k \;  b)\sim \s\; (\k\; 
a)\;  (\k \;  b)$. Hence
for all but finitely many natural numbers
$t$, $\s\; (\k\; 
a)\;  (\k \;  b) \;  \overline{t}\simeq a\; b$ is defined. Thus $(a \; b)$
is defined. By \eqref{uum}, the Assertion~(ii) follows.
\end{proof}

Because $\iota_\A$ is a homomorphism, we have $\overline{n}^{\plim(\A)} = \iota_\A(\num{n})$.
Hence the limit is the congruence class of the guessing
function, as follows:
\begin{align}\lim_t\left(\xi\; \overline{t}\right) = 
\overline{n}\ \mbox{in}\ \A \iff \yohji\xi = \overline{n}\ \mbox{in}\
\plim(\A). \qquad(\xi\in\A) \label{equiv:lim}
\end{align}
 The direct limit of
$\A\stackrel{\iota_\A}{\to}\plim(\A)\stackrel{\iota_{\plim(\A)}}{\to}\plim^2(\A)\cdots$
is indeed a \textsc{pca}, and will be denoted by $\plim^\omega(\A)$.
The application operator of a \textsc{pca} and
  ``limit procedure'' commute; 
\begin{align*}(\lim_t a\;  \overline{t}) \ast (\lim_t b\; \overline{t}) =
  \yohji{a}\ast \yohji{b}= \yohji {\s\;  a\;  b} = \lim_t \s\; 
  a\;  b\;  \overline{t} = \lim_t (a \;   \overline{t}) \; 
 (b\;  \overline{t}).
\end{align*} 

The set of partial numeric functions represented by a \textsc{pca} $\A$
is denoted by $\Repr(\A)$.  By the bounded maximization of a function
$f(x,\vec{n})$, we mean a function $\max_{x<l }f(x, \vec{n})$. The
following fact is
well-known.
\begin{fact}\label{fact:repr}For every \textsc{pca} $\B$, the set of
 functions represented by elements of $\B$ is closed under the
 composition, the bounded maximization and under $\mu$-recursion.  \end{fact}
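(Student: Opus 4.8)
The plan is to reduce all three closure properties to combinatory completeness together with the fact already noted in the excerpt that every \textsc{pca} represents all partial recursive functions. Recall that from $\k$ and $\s$ one obtains, by the usual bracket abstraction, a term-forming operation $\lambda^\ast x.\, t$ with $(\lambda^\ast x.\, t)\, a \simeq t[a/x]$; I will also fix once and for all representing elements of the (partial recursive, hence $\B$-representable) plumbing functions: the successor $\suc$, predecessor $\pred$, the case operator $\cased$ together with a zero test, the pairing $\p$ with projections $\car,\cdr$, the binary maximum, and a fixed-point combinator $Y$ satisfying $Y\, f \simeq f\,(Y\, f)$. The whole argument hinges on the strictness convention of the excerpt, namely that $a\cdot b$ defined forces both $a$ and $b$ defined; this is what lets me match the definedness of a constructed element with the definedness of the value of the function it should represent, in both directions of the representation biconditional.

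For composition, suppose $a$ represents $f\colon \Nset^m \rightharpoonup \Nset$ and $b_1,\dots,b_m$ represent $g_1,\dots,g_m\colon \Nset^k \rightharpoonup \Nset$. I set $c := \lambda^\ast x_1\cdots x_k.\ a\,(b_1\, x_1\cdots x_k)\cdots(b_m\, x_1\cdots x_k)$ and check that $c$ represents $\vec n \mapsto f(g_1(\vec n),\dots,g_m(\vec n))$. The forward direction is immediate from the representation clauses for $a$ and the $b_i$; the converse uses strictness, since if $c\,\overline{n_1}\cdots\overline{n_k}$ is defined then each inner application $b_i\,\overline{n_1}\cdots\overline{n_k}$ is defined, whence each $g_i(\vec n)$ is defined, and then $a$ applied to the results is defined, so $f(\vec g(\vec n))$ is defined.

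For bounded maximization I express $h(l,\vec n) := \max_{x<l} f(x,\vec n)$ by recursion on the bound, with the empty-range convention $h(0,\vec n)=0$ and $h(l+1,\vec n)\simeq \max\bigl(h(l,\vec n),\, f(l,\vec n)\bigr)$, and I realise this recursion by a term built from $Y$, $\suc$, the binary maximum and the element $a$ representing $f$. The point to verify is that the iteration applies $a$ at exactly the arguments $x=0,1,\dots,l-1$, so that the constructed term is defined on $\overline{l}$ and numeral arguments for $\vec n$ precisely when every $f(x,\vec n)$ with $x<l$ is defined, which is exactly when $\max_{x<l} f(x,\vec n)$ is defined, and then returns the correct Church numeral.

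Finally, $\mu$-recursion is handled by a search combinator. Writing $a$ for a representing element of $f$, I put $s := Y\bigl(\lambda^\ast r\, x.\ \cased\,(\mathrm{zerotest}(a\, x\, \vec b))\, x\, (r\,(\suc x))\bigr)$ and take $\lambda^\ast \vec b.\ s\,\overline{0}$ as the candidate representing element of $\vec n \mapsto \mu x.\,[\,f(x,\vec n)=0\,]$. Unfolding the fixed point, $s\,\overline{x}$ tests $f(x,\vec n)$, returns $\overline{x}$ if it is $0$, and otherwise recurses on $\overline{x+1}$; by strictness this computation is defined iff there is a least $x_0$ with $f(x_0,\vec n)=0$ and $f(x,\vec n)$ defined and nonzero for all $x<x_0$, and in that case it returns $\overline{x_0}$, matching the definition of $\mu$-recursion. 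The main obstacle throughout is not the term construction but the definedness bookkeeping: one must argue both directions of the representation biconditional so that the constructed element is defined on a tuple of numerals exactly when the intended function value is defined, and it is precisely the strictness of application that makes this go through, especially in the genuinely partial $\mu$-recursion case.
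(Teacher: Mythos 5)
The paper offers no proof of this Fact (it is cited as well-known), so your argument must stand on its own; the composition case is fine, but both recursion cases have a genuine gap, and it sits exactly where you claim strictness "makes this go through." In a \textsc{pca}, application is strict, so an application $\cased\;b\;u\;v$ can be defined only if \emph{both} branches $u$ and $v$ are defined. Your search term unfolds, via the fixed point, to $s\;\overline{x} \simeq \cased\;(\mathrm{zerotest}(a\;\overline{x}\;\vec b\,))\;\overline{x}\;(s\;(\suc\,\overline{x}))$, in which the recursive call $s\,(\suc\,\overline{x})$ occurs as a bare subterm, not under any abstraction. Hence $s\;\overline{x}$ can be defined only if $s\;\overline{x+1}$ is, and so on for every larger argument; in Kleene's first model $\Nset$ (precisely the \textsc{pca} to which the paper applies this Fact in Theorem~\ref{thm:fip}), a halting computation cannot have such an infinite descending chain of subcomputations, so $s\;\overline{x}$ diverges for \emph{every} $x$ --- even when the zero test succeeds at $x$. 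Your definedness biconditional for the $\mu$-case is therefore false for the term you wrote: strictness is not what validates it, it is what kills it. The bounded-maximization term has the same defect: at bound $0$ the untaken else-branch contains $s\,(\pred\,\overline{0})=s\;\overline{0}$, forcing divergence everywhere. The missing idea is the standard guarding (thunking) trick: since $\lambda^\ast$-abstracts are always defined --- substituting defined elements into the $\s(\cdots)(\cdots)$ and $\k(\cdots)$ terms produced by bracket abstraction yields defined elements, by the axioms $\s x y\!\defined$ and $\k x y = x$ --- each branch must be placed under a vacuous abstraction \emph{before} the case split and the selected thunk forced afterwards, e.g. $\cased\;b\;(\lambda^\ast z.\,x)\;(\lambda^\ast z.\,r\,(\suc x))\;\mathbf{i}$, so that the untaken branch never needs to denote.

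A secondary, related flaw: the unguarded fixed-point equation $Y f \simeq f(Yf)$ does not guarantee $Yf\!\defined$. By strictness it is circular ($Yf$ defined only if $f(Yf)$ defined, which requires $Yf$ defined), and in Kleene's first model the unguarded combinator diverges for every $f$. What you need --- and what is definable, again exactly because the recursive call can be hidden under a $\lambda^\ast x$ --- is the guarded (Turing) fixed-point combinator $Z$ satisfying $Zf\!\defined$ and $Zf\,x \simeq f(Zf)\,x$. With $Z$ together with guarded branching, your three constructions do go through essentially as you describe, and the definedness analysis you state for $\mu$-recursion then becomes correct.
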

Then, we can prove
$\Repr(\plim^\alpha(\A))=\cup_{n<\max(1+\alpha,\omega)}\lim^n(\Repr(\A))$.
 Shoenfield's limit lemma~(see \cite{MR982269} for instance) implies that the
\textsc{pca} $\plim^\alpha(\A)$ represents all
$\emptyset^{(\max(\alpha,\omega))}$-recursive functions. So, the
\textsc{pca} $\plim^\omega(\A)$ can represent any arithmetical function.

\section{Iterated Limiting Realizability Interpretation of Semi-classical EONs}\alabel{sec:classical}

It is well-known that a form of Markov Principle over the language $L_\pra$,
\begin{displaymath}
\dne{1}\quad\dneg\exists n \forall m<t. f(n,m,l)=0\to\exists
n\forall m<t. f(n,m,l)=0
\end{displaymath}
is realized by an ordinary program $r(t,l)=\mu n. \max_{m<t}f(n,m,l)=0$ via recursive
realizability interpretation of \cite{MR0015346}. Here the program $r(t,l)$
is representable by a \textsc{pca} $\A$.
A stronger principle of classical logic
\begin{displaymath}
\dne{2}\qquad \dneg\exists n \forall m.\, f(n,m,l)=0\to\exists n \forall
  m.\, f(n,m,l)=0 ,
\end{displaymath} 
the ``limit'' with respect to $t$ of a $\dne1$, turns out to be realized by
a limiting computation $\lim_t r(t,l)$ which  is representable
by a limiting \textsc{pca} $\plim\A$.
This simple approach can be extended to
an \emph{iterated limiting realizability interpretation} of $\dne\alpha$ for
$\alpha\leq\omega$, by $\plim^\alpha \A$.

For the convenience, we embed $\ha+\dne{1+\alpha}$ in a corresponding
extension of a constructive logic $\eon$. It is $\eon$ plus a form of $\dne{1+\alpha}$. The
iterated limiting realizability interpretation is introduced by using an
$\alpha$-iterated autonomous limiting \textsc{pca}s
$\plim^\alpha(\A)$.

Here $\eon$ is a constructive logic of partial
terms~(see p.~98 of \cite{MR786465}), and the language includes Curry's combinatory constants, and a partial
application operator symbol. 
The language of $\eon$ is
$\{(-)\cdot(\bullet),\ \s, \k, \cased, 0, \suc, \pred, \p, \car,\cdr; =, N,
\downarrow\}$. 
Here the constant symbols $\p,\car,\cdr$ are intended to be the pairing function, the first projection, and the second
projection, respectively. The predicate symbol $=$ means ``the both hand sides are
defined and equal.'' The 1-place predicate symbols $N$ and $\downarrow$ mean
``is a natural number'' and ``is defined,'' respectively. As before, we write
$ a_0 \;  a_1 \;  a_2\;  \cdots a_{n-1}\;  a_n$ for
$(\cdots ((a_0 \cdot a_1 )\cdot a_2)\cdot \cdots a_{n-1})\cdot a_n$,
whenever no confusion occurs.

In writing formulas of $\eon$, variables $n,m,l,i$ and $j$ will be
implicitly restricted to the predicate $N$, i.e. they are ``natural
number variables.'' So, $\forall n.\, A n$ is the abbreviation for
$\forall x.\, (N x\to A x)$ and $\exists m.\, B m$ for $\exists y.\, (N
y\wedge B y)$. We review the logical axioms of $\eon$ from p.~98 of \cite{MR786465}.
The logical axioms and rules of $\eon$ are as follows: $\eon$ has
the usual propositional axioms and rules. The quantifier axioms and
rules are as follows: From $B\to A$ infer $B\to \forall x A$ ($x$ not
free in $B$).  From $A\to B$ infer $\exists x A\to B$ ($x$ not free in
$B$). $\forall x A[x] \wedge t\defined \to A[t]$.  $ A[t] \wedge
t\defined \to\exists x A[x]$. $x=x$. $x=y  \to y=x$. $t=s \to t\defined
\wedge s\defined$. $R(t_1,\ldots, t_n)\to t_1\defined\wedge\cdots \wedge
t_n\defined$. ($R$ is any atomic formula). $c\defined$ (every constant
symbol $c$). $x\defined$ (every variable $x$). Let us abbreviate
$t\simeq s$ for $(t\!\defined \vee\; s\!\defined \ \to\ t=s)$. $\eon$ has a logical
axiom $t\simeq s\to A[t]\to A[s]$.

The non-logical axioms of $\eon$ consists of
\begin{align*}
& \k x y = x,\quad \s x y z \simeq x z (y z),\quad \s x y \downarrow, \quad \k \ne
 \s,\\
& \p x y \downarrow,\quad \car (\p x y ) = x ,\quad \cdr (\p x y) = y,\\
&N(0),\quad \forall x \left(N x \to [N(\suc x) \ \wedge\  \pred (\suc x) = x\
 \wedge\ \suc x \ne 0]\right),\\
& \forall x \left(N x \ \wedge\ x\ne 0 \to N(\pred x) \ \wedge\  \suc(\pred x) = x\right),\\
& N x\ \wedge\ N y\ \wedge\ x = y \to \d x y u v=u,\\
& N x\ \wedge\ N y\ \wedge\ x \ne y \to \d x y u v=v,\\
& A(0)\ \wedge\ \forall x \left( N x\ \wedge\ A(x)\to A(\suc x)\right) \to
\forall x(N x\to A(x)).
\end{align*}

\emph{We will interpret $\eon$ in a \textsc{pca}, as we interpret classical
logic in a model theory}.  The interpretations of the constant symbols
$\s,\k$ are the corresponding constants of the \textsc{pca} $\A$. The
interpretations of the constant symbols $0$, $\pred$, $\suc$, $\cased$
in $\A$ are defined in a similar way that they are represented in
Curry's combinatory logic by Church numerals. The interpretation of the
pairing $\p$ and projections $\car,\cdr$ are as in Curry's combinatory
logic. For detail, see \cite{BlueBook}. The application operator symbol
$(-)\cdot(\bullet)$ of $\eon$ is interpreted as the application of the
\textsc{pca} $\A$. The unary predicate symbols $N$ and $\defined$ are
interpreted as the set of Church numerals of $\A$ and $\A$ itself,
respectively. The binary predicate symbol $=$ is interpreted as just the
identity relation on $\A$. Given an assignment
$\rho:\{\eon\mbox{-variables}\}\to \A$. The interpretation of an
$\eon$-term $t$ in $\A$ and $\rho$ is defined as an element of $\A$ as
usual. The interpretation of an
$\eon$-formula $A$ in the \textsc{pca} $\A$ and $\rho$ is defined as usual as one of the
truth-value $\top,\bot$.
We say an $\eon$-formula $A$ is true in a \textsc{pca} $\A$ and an
assignment $\rho:\{\eon\mbox{-variables}\}\to\A$, if the
interpretation of $A$  in $\A$ and $\rho$ is $\top$. In this
case we write $\A,\rho\models A$. If $\A,\rho\models A$ for every
$\rho$, then we write $\A\models A$.

\begin{definition}\alabel{eonrealizability}Let $T$ be a formal system extending $\eon$. The realizability interpretation of $T$ is just an association to each
formula $A$ of $T$ another formula $\exists e.\,\realize{e}{A}$ of $T$ with a
variable $e$ being fresh. It is read ``some $e$
realizes $A$.''  
\def\e{t}
For an $\eon$-term $t$ and an $\eon$-formula $A$, we define an $\eon$-formula
 $\realize{t}{A}$ as follows:
\begin{itemize}
\item $\realize{\e}P$ is $\e\downarrow\wedge\; P$ for each  atomic formula $P$.
\item $\realize{\e}{\neg A}$ is $\e\downarrow\wedge\;\forall x(\neg \realize{x}{A})$.
\item $\realize{\e}{A\to B}$ is $\e\downarrow\wedge\;\forall      x(\realize{x}{A}\to \e x\downarrow\;
  \wedge\; \realize{\e x}{B})$.
\item $\realize{\e}{\forall x.\, A}$ is $\forall x(\e x\downarrow\;
  \wedge\;\realize{\e x}{A})$.
\item $\realize{\e}{\exists x.\, A[x]}$ is
 $\realize{\cdr \e}{A[\car \e]}$. 

\item $\realize{\e}{A\wedge B}$ is $\realize{\car \e}{A}\wedge \realize{\cdr \e}{B}$.

\item $\realize{\e}{A\vee B}$ is $N(\car \e)\wedge (\car \e = 0\to
  \realize{\cdr \e}{A})\wedge(\neg \car \e = 0\to \realize{\cdr \e}{B})$.
\end{itemize}
\end{definition}

\begin{definition}\label{def:naive}
 A formal arithmetic
 $T$ extending $\eon$ is said to be \emph{sound} by  the realizability interpretation
 for a \textsc{pca} $\A$, provided that for every sentence $B$ 
provable in $T$, a sentence  $\exists e.\,(\realize{e}{B})$ is true in 
 $\A$.
\end{definition}

(Realizability) interpretations and model theory of a (constructive)
arithmetic $T$ are often formalized within the system $T$ plus reasonable axioms. For
example, \cite{MR48:3699}, \cite{avigad00} and so on formalized
realizability interpretations of constructive logics, while
\cite{smorynski78:_nonst_model_of_arith}, \cite{MR1748522} and so on did
non-standard models of various arithmetic. However, as we defined in
Definition~\ref{def:naive}, we will carry out our
realizability interpretation within a naive set theory. This readily leads to
the second assertion of the following Lemma.

\begin{lemma}\label{lem:realpnftrue} 
Suppose $B$ is an $\eon$-formula in \textsc{pnf} with all the variables
 relativized by the predicate $N$. 
\begin{enumerate}
\item \label{assert:realpnftrue1}For any  $\eon$-term $t$,  we have
      $\eon\vdash \realize{t}{B} \to B$.

\item \label{assert:realpnftrue2} If $B$ is an $\eon$-sentence and  $\A$
      is a \textsc{pca}, then $\A\models\dneg \exists
      x.\ \realize{x}{B}$ implies $\A\models B$.
\end{enumerate}
\end{lemma}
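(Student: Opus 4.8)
The plan is to prove Assertion~\ref{assert:realpnftrue1} by induction on the number of leading quantifiers of the \textsc{pnf}-formula $B$, and then to derive Assertion~\ref{assert:realpnftrue2} from it using that the interpretation $\A\models(-)$ is two-valued. Write $B$ as $Q_1 x_1\cdots Q_n x_n.\, P$ with $P$ quantifier-free and each $Q_i\in\{\forall,\exists\}$ ranging over $N$. The quantifier steps are handled directly by the realizability clauses: the clause for $\forall x.\, C$, which unfolds to $\forall x(tx\defined\wedge\realize{tx}{C})$, gives $\forall x.\, C$ via the induction hypothesis $\realize{tx}{C}\to C$; and the clause for $\exists x.\, C[x]$ exposes the witness $\car t$ (a natural number by the $N$-relativization) together with a realizer of the matrix instance $C[\car t]$, to which the induction hypothesis applies, recovering $\exists x.\, C[x]$. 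Thus all the content sits in the base case, where $B=P$ is quantifier-free.

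For the base case I would prove, by a subsidiary induction on the structure of the quantifier-free formula $P$, the stronger pair of statements: (a) $\eon\vdash\realize{t}{P}\to P$ for every $\eon$-term $t$, and (b) there is a fixed $\eon$-term $r_P$, not containing the realizer variables, with $\eon\vdash P\to\realize{r_P}{P}$. These must be proved simultaneously, because soundness~(a) for $\neg A$ and for $A\to B$ consumes the uniform realizer~(b) of the antecedent, while~(b) for these connectives consumes soundness~(a) of the antecedent. Concretely, to prove $\realize{t}{A\to B}\to(A\to B)$ one assumes $A$, feeds the canonical realizer $r_A$ into $t$, and applies~(a) for $B$; conversely one sets $r_{A\to B}:=\k\, r_B$, so that $(\k\, r_B)\,y$ reduces to $r_B$ and, for any realizer $y$ of $A$, one has $A$ by~(a), hence $B$, hence the needed $\realize{r_B}{B}$ by~(b). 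The atomic case takes $r_P:=0$, and the conjunction case takes $r_{A\wedge B}:=\p\, r_A\, r_B$; neither needs more.

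The main obstacle, and the only place where the hypotheses are essentially used, is the disjunction clause of~(b): to realize $A\vee B$ by a single term one must branch on which disjunct holds, which requires a \emph{characteristic term} $c_P$ satisfying $\eon\vdash N(c_P)\wedge(c_P=0\to P)\wedge(\neg\, c_P=0\to\neg P)$. I would construct $c_P$ by a further structural induction, combining the subformula characteristic terms with the definition-by-cases combinator $\cased$ and the combinators $\k,\s$; its base case is the decidability of the atomic formulas, which under the $N$-relativization reduces to decidability of equality on $N$, and that is provable in $\eon$ from the numeric induction axiom together with the axioms for successor and predecessor. Given $c_P$, the realizer of $A\vee B$ is obtained by selecting $\p\,0\,r_A$ or $\p\,1\,r_B$ according to the value of $c_A$, and the soundness half~(a) for $A\vee B$ likewise decides $\car t=0$ on $N$.

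Finally, Assertion~\ref{assert:realpnftrue2} follows quickly. Because $\A\models(-)$ is two-valued, $\A\models\dneg\exists x.\,\realize{x}{B}$ yields $\A\models\exists x.\,\realize{x}{B}$, so some $e\in\A$ satisfies $\A,[x\mapsto e]\models\realize{x}{B}$. Taking $t:=x$ in Assertion~\ref{assert:realpnftrue1} gives $\eon\vdash\realize{x}{B}\to B$; since the \textsc{pca} interpretation makes every theorem of $\eon$ true---its non-logical axioms hold by construction and classical truth validates the intuitionistic rules of $\eon$---we get $\A\models\forall x(\realize{x}{B}\to B)$. Combining this with the witness $e$ and using that $B$ is a sentence gives $\A\models B$.
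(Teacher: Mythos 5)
Your overall skeleton matches the paper's: Assertion~(1) is proved by induction along the quantifier prefix, unwinding the relativized $\forall$/$\exists$ clauses exactly as you do, and Assertion~(2) is obtained from Assertion~(1) by observing that the \textsc{pca} semantics is two-valued (so the outer $\dneg$ can be dropped) and that $\eon$ is sound for that semantics. Where you diverge is the base case. The paper's induction has exactly three cases --- prime matrix, relativized $\forall$, relativized $\exists$ --- so it only ever treats matrices that are prime formulas; and indeed every application of the lemma in the paper (Lemma~\ref{lem:jump}, Theorem~\ref{thm:eonrealize}) has a matrix of the form $t\;\vec{n}\;\vec{m}=0$. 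You instead attack arbitrary quantifier-free matrices, and you correctly identify what that costs: soundness of $\realize{t}{(-)}$ for $\neg$, $\to$, $\vee$ inside the matrix cannot be proved by the naive induction but needs a simultaneous \emph{uniform completeness} statement (your~(b)), with a fixed realizing term, and for $\vee$ a characteristic term provable in $\eon$. That analysis is a genuine refinement of what the paper writes down, and for implication antecedents the uniform (fixed-term) form of~(b) really is forced, since the existential form runs into an independence-of-premise obstruction.

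However, the step your extra generality rests on is false for $\eon$. You claim that decidability of the atomic formulas ``under the $N$-relativization reduces to decidability of equality on $N$.'' Relativizing the \emph{variables} to $N$ does not make the \emph{atoms} numeric: a quantifier-free matrix may contain atoms $s\defined$, $N(s)$, or $s_1 = s_2$ where $s, s_1, s_2$ are compound applicative terms such as $x\cdot y$ with $x,y$ numeric variables, and none of these is decidable in $\eon$ --- the case-distinction axiom for $\cased$ applies only to arguments provably in $N$, and definedness of an application is a halting-type statement. So the characteristic term $c_P$ cannot be constructed in general; your clause~(b) for disjunction fails, and with it clause~(a) for any matrix having such a disjunction (or a nested implication) in antecedent position. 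The upshot: your proof is complete exactly on the fragment where every atom of the matrix is an equation between terms provably in $N$ --- which happens to be the only fragment the paper ever uses, and there your machinery collapses to the paper's trivial prime case (with $r_P:=0$) --- but it does not establish the lemma in the full generality you aim at. To be fair, the paper's own three-case induction does not either, since it silently omits all propositional cases; the honest fix is either to restrict the lemma to prime matrices, or to add the hypothesis that all atoms of the matrix are numeric equations, under which your simultaneous induction goes through.
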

\begin{proof}\eqref{assert:realpnftrue1} The proof is by induction on the structure of $B$.
 When $B$ is prime, it is trivial.
 When $B$ is $\forall x(N x \to A x)$, then 
 $\realize{t}{B}$ is $\forall x (t\cdot x\defined \ \wedge\ \forall y(N
 x \to t\cdot x\cdot y\defined\ \wedge\ \realize{t\cdot x\cdot
 y}{A x}))$ where the variables $x$ and $y$ are fresh. So the induction
 hypothesis implies $\realize{t}{B} \to \forall x (t\cdot x\defined \ \wedge\ \forall y(N
 x \to t\cdot x\cdot y\defined\ \wedge\ A x))$. Because $y$ is fresh,
 $\realize{t}{B} \to \forall x(N x \to A x)$.
 When $B$ is $\exists x(N x \wedge A x)$, then 
 $\realize{t}{B}$ is $\car  (\cdr t)\defined\ \wedge\ N(\car  t)\ \wedge\
\realize{\cdr(\cdr t)}{A (\car t)}$. So the induction
 hypothesis implies $\realize{t}{B} \to \car  (\cdr t)\defined\ \wedge\ N(\car  t)\ \wedge\
{A (\car t)}$. Hence,
 $\realize{t}{B} \to N(\car  t) \ \wedge\  A (\car  t)$. Thus $\realize{t}{B}\to \exists x(N x\
 \wedge\ A x )$.

\eqref{assert:realpnftrue2} By Definition~\ref{eonrealizability}, the
 system $\eon$ proves a sentence $\exists x.\ \realize{x}{\dneg
 B}\to \dneg \exists x. \realize{x}{B}$. By the premise and the
 soundness of $\eon$ for any \textsc{pca}, $\dneg \exists
 x. \realize{x}{B}$ is true in the \textsc{pca} $\A$, and thus $\exists
 x. \realize{x}{B}$ is so.  By the soundness of $\eon$ in any
 \textsc{pca} and the
 Assertion~\eqref{assert:realpnftrue1} of this Lemma, the sentence $B$
 is true in the \textsc{pca}.\end{proof}

We will make the argument of the first paragraph of this section
rigorous.  It is instructive to consider the following Lemma.

\begin{lemma}\label{lem:jump}
 For each closed $\eon$-term $t$ and for each \textsc{pca} $\A$, whenever  $\A\models \forall m_1\forall m_2.\,
  N(t\;m_1\;m_2)$ holds, it holds
\[
\plim (\A) \models    \exists x.\,\bigl[\realize{x}{(\dneg\exists m_1\forall m_2.\,t\;m_1\;m_2 =0\to \exists m_1\forall m_2.\,t\;m_1\;m_2 = 0)}\bigr].
\]
\end{lemma}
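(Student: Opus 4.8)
The plan is to exhibit a single element $x\in\plim(\A)$ realizing $C:=(\dneg A\to A)$, where $A:=\exists m_1\forall m_2.\,t\,m_1\,m_2=0$, by taking $x$ to be a constant function (ignoring its argument) whose value $w$ is a realizer assembled from a \emph{limiting bounded search}. Concretely, I would first, using Fact~\ref{fact:repr} and the hypothesis $\A\models\forall m_1\forall m_2.\,N(t\,m_1\,m_2)$, produce $\rho\in\A$ representing the total function $r(t')=$ ``the least $n\le t'$ with $t\,\num{n}\,\num{m}=\num{0}$ for all $m\le t'$, and $0$ if there is none'' (the inner test is a bounded maximization of the signum of $t\,\num{n}\,\num{m}$, the outer search a bounded $\mu$-recursion). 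The intended witness part of $w$ is $\yohji{\rho}\in\plim(\A)$, the intended $\forall m_2$-part is a total element $q$ with $q\ast z_1\ast z_2$ always defined, and I set $w:=\p\,\yohji{\rho}\,q$ (in $\plim(\A)$) and $x:=\bm k\ast w$, so that $x\ast z=w$ for every $z$ and $x\downarrow$. It then remains to check that, \emph{whenever} some $y$ realizes $\dneg A$ in $\plim(\A)$, the element $w$ genuinely realizes $A$; outside that case the implication clause of $C$ is vacuous.

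The crucial preliminary step is to show that a realized premise forces $A$ to hold in $\A$. Unwinding the clauses for $\neg\neg A$, the statement $\plim(\A)\models\realize{y}{\dneg A}$ says $\plim(\A)\models\forall z.\,\neg(\realize{z}{\neg A})$; instantiating $z$ by $\k$ and using the classical metatheory in which the interpretation is carried out, this yields $w_0$ with $\plim(\A)\models\realize{w_0}{A}$. Since $A$ is in \textsc{pnf} with all variables relativized to $N$, Lemma~\ref{lem:realpnftrue}\eqref{assert:realpnftrue1} together with soundness of $\eon$ in $\plim(\A)$ gives $\plim(\A)\models A$. I would then transfer this down to $\A$: since $\iota_\A$ is a homomorphism (Fact~\ref{fact:caninj}) preserving $\k,\s$, all constant symbols, Church numerals and application, the interpretation of the closed term $t$ in $\plim(\A)$ is $\iota_\A$ of its interpretation in $\A$, and for $\overline{n_1}^{\plim(\A)}=\iota_\A(\num{n_1})$, $\overline{n_2}^{\plim(\A)}=\iota_\A(\num{n_2})$ one computes $t\,\overline{n_1}^{\plim(\A)}\,\overline{n_2}^{\plim(\A)}=\iota_\A(t\,\num{n_1}\,\num{n_2})$. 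By injectivity of $\iota_\A$, the equation $t\,\overline{n_1}^{\plim(\A)}\,\overline{n_2}^{\plim(\A)}=\overline{0}^{\plim(\A)}$ holds iff $t\,\num{n_1}\,\num{n_2}=\num{0}$ in $\A$. Hence $\plim(\A)\models A$ delivers a genuine $n_1$ with $t\,\num{n_1}\,\num{m}=\num{0}$ in $\A$ for all $m$, i.e. $\A\models A$.

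With $A$ true in $\A$, let $n_1^\ast$ be the least such witness. A short estimate stabilises the bounded search: for each $n<n_1^\ast$ fix the least $m_n$ with $t\,\num{n}\,\num{m_n}\ne\num{0}$; once $t'$ exceeds $n_1^\ast$ and all these $m_n$, the search rejects every $n<n_1^\ast$ and accepts $n_1^\ast$, so $r(t')=n_1^\ast$. Thus $\lim_{t'}(\rho\,\num{t'})=\num{n_1^\ast}$, and by \eqref{equiv:lim} the witness $\yohji{\rho}$ equals the Church numeral $\overline{n_1^\ast}^{\plim(\A)}$, giving $N(\car w)$; and since $t\,\overline{n_1^\ast}^{\plim(\A)}\,\overline{n_2}^{\plim(\A)}=\overline{0}^{\plim(\A)}$ for every $n_2$ (same transfer, easy direction), the total $q$ realizes $\forall m_2.\,t\,\overline{n_1^\ast}^{\plim(\A)}\,m_2=0$ and the atomic clauses hold. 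Therefore $\realize{w}{A}$ holds in $\plim(\A)$, so $x=\bm k\ast w$ realizes $C$.

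The main obstacle I expect is the transfer step of the middle paragraph: carefully justifying that closed-term evaluation and the atomic equality $t\,m_1\,m_2=0$ pass between $\plim(\A)$ and $\A$ along $\iota_\A$, so that the realized double negation of $A$ in $\plim(\A)$ really certifies the convergence of a search that lives in $\A$. Everything else---the combinatory packaging of $x$ and $w$, the representability of $r$ via Fact~\ref{fact:repr}, and the stabilisation estimate---is routine given Facts~\ref{fact:repr} and~\ref{fact:caninj} and \eqref{equiv:lim}.
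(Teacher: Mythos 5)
Your proof is correct and takes essentially the same approach as the paper's: extract the truth of $A$ in $\plim(\A)$ from any realizer of $\dneg A$ via Lemma~\ref{lem:realpnftrue}, represent in $\A$ a search function that stabilizes to a witness, identify its equivalence class with a Church numeral of $\plim(\A)$ by \eqref{equiv:lim}, and package the resulting realizer of $A$ under $\k$. Your two refinements---using a total bounded search with default value $0$, so the realizer is defined even when no realizer of $\dneg A$ exists, and spelling out the $\iota_\A$-transfer of closed-term equations between $\A$ and $\plim(\A)$---tighten points that the paper's proof (which uses the partial unbounded search $\mu m_1\,(\max_{m_2<l} t\; m_1\; m_2 = 0)$) leaves implicit.
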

\begin{proof} 
 Let an $\eon$-formula
$\realize{q}{\dneg\exists m_1\forall m_2.\,t\;m_1\;m_2 =
  0}$
be true in $\plim(\A)$. By Lemma~\ref{lem:realpnftrue}~\eqref{assert:realpnftrue2},
for some natural number
$n_1$, the $\eon$-sentence $\forall  m_2.\, t \;\overline{n_1}\; m_2 = 0$  is true in $\plim(\A)$.

We can see that  $\A$ has an element $\xi$
representing the following unary numeric function:
\begin{displaymath}
\g(l):={\mu m_1.\, \bigl((\max_{m_2 < l}\ t\; m_1\; m_2) = 0\bigr)}.
\end{displaymath}

Note that $\g ( l)\leq \g (l')\leq n_1$ if $l\leq l'$. So, some natural
number $m_1$ satisfies $\lim_l \g ( l) = m_1$. That is, for all natural
numbers $l$ but finitely many, we have $\g ( l ) =m_1$. So, for all
natural numbers $l$ but finitely many, the formula $\xi\; \overline{l} =
\overline{m_1}$ is true in $\A$.

 By the definition of $\plim(\A)$, we have 
 \begin{align*}\yohji\xi =
 \overline{m_1}
 \end{align*} in $\plim(\A)$.  By the definition of $\xi$, for all natural
 numbers $l$ but finitely many, an $\eon$-sentence  $(\max_{m_2 < l }\; t\;
 \overline{m_1}\; \overline{m_2}) = 0$ is true in $\plim(\A)$.
Therefore, for all natural numbers
$m_2$, an $\eon$-sentence  $t\; \overline{m_1}\; \overline{m_2}=
 0$ is true in $\plim(\A)$. Hence, $ \forall m_2.\, t\; \overline{m_1}\; m_2 =0$ is true in $\plim(\A)$.

So, as a realizer $x$ of $\neg\neg \exists m_{1} \forall m_{2}.\ t\;m_{1}\; m_2 =0\ \to \ \exists m_{1} \forall m_{2}.\ t\;m_{1}\; m_2 =0$, take 
$\k \Bigl(\p\ \bigl(\p\; 0\; (\k (\k\; 0))\bigr)\ \yohji\xi\Bigr) \in \plim(\A)$.
\end{proof}

\begin{definition}\label{def:sdneprime} For each \textsc{pca} $\A$ and each nonnegative integer $k$,
$(\sdneprime{k})$ is a rule
\begin{align*}
\frac{\mbox{$t$ is a closed term of $\eon$} \qquad\qquad \forall \vec{n}\forall m_1\ldots\forall m_k.\,  N( t\ \vec{n}\ m_1\ \cdots\ m_k)}
{\forall \vec{n}\left(\begin{array}{ll}\dneg\exists m_1\forall m_2\exists m_3\cdots Q_{k}
 m_k.\ t\;\vec{n}\; m_1\; m_2 \ldots  m_k=0\\
\,\to\exists m_1\forall m_2\exists m_3\cdots Q_{k} m_k.\ t\;\vec{n}\;
		      m_1\; m_2 \ldots  m_k=0\end{array}\right)}
\end{align*}
Here $Q_k$ is $\exists$ for odd $k$ and $\forall$ for even $k$.
\end{definition}

%

\begin{theorem}\label{thm:eonrealize} For each nonnegative integer $k$ and each \textsc{pca} $\A$, if the system $\eon$ +
$(\sdneprime{k+1})$ proves an $\eon$-sentence $A$, then a sentence $\exists e.\ \realize{e}{A}$ is true in the \textsc{pca}
  $\plim^k(\A)$. 
\end{theorem}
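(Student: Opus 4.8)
The plan is to argue by induction on the derivation of $A$ in $\eon+(\sdneprime{k+1})$, splitting the work into the soundness of the base calculus $\eon$ in an arbitrary PCA and the soundness of the single extra rule $(\sdneprime{k+1})$ in the particular PCA $\plim^k(\A)$. For the base calculus I would invoke the standard combinatory realizability soundness for logics of partial terms: for every PCA $\B$ — in particular for $\B=\plim^k(\A)$, which is a PCA by the construction of Section~\ref{sec:limitingPCA} — each logical and non-logical axiom of $\eon$ carries a realizer built uniformly from the combinators $\k,\s,\p,\car,\cdr$, and each inference rule of $\eon$ (the induction axiom being realized by the primitive recursion available in any PCA through Fact~\ref{fact:repr}) sends realizers of its premises to a realizer of its conclusion. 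These realizers are independent of the particular PCA, so every $\eon$-step of the derivation preserves realizability in $\plim^k(\A)$, and the whole theorem reduces to checking that the rule $(\sdneprime{k+1})$ does too.

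The heart of the argument is a meta-lemma generalizing Lemma~\ref{lem:jump} from two quantifiers to $k+1$: for each $j$, every \emph{true} arithmetic sentence $Q\,m_1\cdots Q\,m_{j+1}.\,\hat t=0$ in prenex form (with $t$ a closed $\eon$-term total over $\plim^j(\A)$, and with either a $\Sigma^0_{j+1}$- or a $\Pi^0_{j+1}$-prefix) is realizable in $\plim^j(\A)$. Granting this, the soundness of an application of $(\sdneprime{k+1})$ follows as in Lemma~\ref{lem:jump}: from the realizability, hence by Lemma~\ref{lem:realpnftrue}~\eqref{assert:realpnftrue1} the truth in $\plim^k(\A)$, of the totality premise $\forall\vec n\,\forall m_1\cdots\forall m_{k+1}.\,N(t\,\vec n\,m_1\cdots m_{k+1})$, fix numerals $\vec n$ and a realizer $q$ of the antecedent $\dneg\Sigma$; then $\eon$ proves $\exists x.\,\realize{x}{\dneg\Sigma}\to\dneg\exists x.\,\realize{x}{\Sigma}$, so Lemma~\ref{lem:realpnftrue}~\eqref{assert:realpnftrue2} makes $\Sigma$ true in $\plim^k(\A)$, and since $t$ denotes a genuine total numeric function preserved by the canonical injections, $\Sigma$ holds as an ordinary arithmetic statement. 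The $j=k$ instance of the meta-lemma then realizes it, and a uniform assembly over $\vec n$ yields the required single element of $\plim^k(\A)$.

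I would prove the meta-lemma by induction on $j$, using $\plim^j(\A)=\plim(\plim^{j-1}(\A))$. Writing the true $\Sigma^0_{j+1}$-matrix as $\exists m_1.\,\Phi(m_1)$ with $\Phi=\forall m_2.\,\Psi(m_1,m_2)$ a true $\Pi^0_j$-formula and $\Psi\in\Sigma^0_{j-1}$, let $m_1^\ast$ be the least true witness. The outer witness is recovered as a limit exactly as in Lemma~\ref{lem:jump}: I define a guessing function $\g$ that at stage $l$ returns the least $m_1\le l$ surviving a bounded check of the inner matrix up to $l$, show $\g$ is eventually constant with value $m_1^\ast$, and read off the witness $\yohji\xi$, which equals the numeral of $m_1^\ast$ in $\plim(\plim^{j-1}(\A))$, for a $\plim^{j-1}(\A)$-element $\xi$ representing $\g$. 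For this $m_1^\ast$ the inner true $\Pi^0_j$-formula $\forall m_2.\,\Psi(m_1^\ast,m_2)$ is realized uniformly in $m_2$ by appeal to the induction hypothesis (stated for both prefix types) at the appropriate lower level, transported into $\plim^j(\A)$ along the injections $\iota$; pairing this with the limit-witness gives the realizer. The $\Pi^0_{j+1}$-case is symmetric.

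The main obstacle — and the one point where the iterated limit is essential rather than cosmetic — is making the guessing function $\g$ actually computable. For $j\ge1$ the inner $\Pi^0_j$-matrix is undecidable, so the naive bounded check of Lemma~\ref{lem:jump} is unavailable at the ground level. Here I would invoke the representability result closing Section~\ref{sec:limitingPCA}, that $\plim^{j-1}(\A)$ represents every $\emptyset^{(j-1)}$-recursive function: since the $\Sigma^0_{j-1}$-matrix $\Psi$ is decidable relative to $\emptyset^{(j-1)}$, its bounded conjunction is computed by a $\plim^{j-1}(\A)$-represented function, so $\g$ genuinely lives one level down and its limit can be formed in $\plim^j(\A)$. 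The delicate bookkeeping the full proof must carry out is to verify simultaneously that $\g$ stabilizes to $m_1^\ast$ — using that a least true witness exists and that each smaller index is eventually refuted by the bounded check — and that the assembled realizer, together with the dependence of $\xi$ on the parameters $\vec n$, is presented as a single element of $\plim^k(\A)$ depending uniformly on $\vec n$.
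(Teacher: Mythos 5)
Your reduction of the theorem to the rule $(\sdneprime{k+1})$ — induction on derivations, Beeson-style soundness for the pure $\eon$ part, Lemma~\ref{lem:realpnftrue} to pass from a realizer of the double negation to the truth of the prenex sentence down in $\A$, and witnesses recovered as limits of bounded-search guessing functions — is the same skeleton as the paper's. But the step your meta-lemma leans on, that a realizer of the inner true formula in $\plim^{j-1}(\A)$ can be ``transported into $\plim^j(\A)$ along the injections $\iota$,'' is false in general, and avoiding it is exactly what the paper's formulation is engineered for. The clauses of Definition~\ref{eonrealizability} for $\to$ and $\forall$ quantify over \emph{all} elements of the ambient \textsc{pca}, including the ``junk'' realizers $z$ of $N x$, and a perfectly good realizer in the smaller algebra may compute its existential witnesses from that junk. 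Concretely, work in $\Nset$ and let $e$ realize $\forall m_1\exists m_2.\,0=0$ by $e\cdot x\cdot z=\p\,(\overline{z\bmod 2})\,w$ for a fixed defined $w$; this is a legitimate realizer. Now take $Z=\yohji{c}\in\plim(\Nset)$ where $c\cdot\overline{t}=t$ (conversion of a Church numeral to its index). Then $\car\,(\iota(e)\cdot\iota(\overline{0})\cdot Z)$ is the $\sim$-class of $t\mapsto\overline{t\bmod 2}$, which is not eventually constant, hence is \emph{not} a Church numeral of $\plim(\Nset)$; the predicate $N$ fails and $\iota(e)$ does not realize the sentence upstairs. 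What does transport along $\iota$ is not realizability but representability of total numeric functions (Fact~\ref{fact:caninj}: numerals go to numerals, application is preserved). This is why the paper never moves realizers between levels: it builds the strategy (Skolem) functions $m_1(\vec n), m_3(\vec n,\nu_2),\ldots$ of Definition~\ref{def:propmove}, proves them representable in $\plim^k(\A)$ (Claims~\ref{claim:represent} and~\ref{claim:moverepr}), and assembles the realizer once, at the top level, out of these representatives. Repairing your induction requires carrying the extra invariant that every realizer you construct ignores junk arguments and depends on numeral inputs only through representable functions — at which point you have reconstructed the paper's argument.

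Two further gaps. First, your meta-lemma is stated for \emph{sentences}, and the $\Pi^0_{j+1}$ case is not ``symmetric'': realizing $\forall m_1.\,\Sigma^0_j(m_1)$ needs one element acting uniformly on all numerals, and per-instance realizers $r_{m_1}$ supplied by a sentence-level induction hypothesis cannot be glued together. The hypothesis must be strengthened to formulas with free parameters, realized by functions representable in those parameters (this is what the arguments $\vec n,\nu_2,\nu_4,\ldots$ of the paper's $g_j$ and $m_{2p-1}$ achieve, and it is also what your final ``uniform assembly over $\vec n$'' silently requires); you flag this as bookkeeping, but it is the core of the construction, not an afterthought. Second, your justification that the guessing function is computable one level down — ``$\Psi\in\Sigma^0_{j-1}$ is decidable relative to $\emptyset^{(j-1)}$ and $\plim^{j-1}(\A)$ represents every $\emptyset^{(j-1)}$-recursive function'' — tacitly assumes $\A=\Nset$. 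The theorem concerns an arbitrary \textsc{pca} $\A$, and the matrix $t\,\vec n\,\vec m=0$ involves an arbitrary element of $\A$, so it defines no arithmetical predicate at all and Post-style relativization to $\emptyset^{(j-1)}$ is unavailable; the bounded checks must instead be built by iterated limits starting from the $\A$-representable function $g_0$, exactly as in the paper's Claim~\ref{claim:represent}.
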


\begin{proof} 
The verification is by induction on the length of the proof $\pi$ of
$A$. The axioms and rules other than $(\sdneprime{k})$ is
manipulated as in the proof of Theorem 1.6 of \cite{MR786465}. 

We will consider the case $(\sdneprime{k})$. By the induction hypothesis on the proof $\pi$, an $\eon$-sentence $\exists e.\ \realize{e}{\forall \vec{n}\forall m_{1} \ldots \forall m_{k}. \, N(t\ \vec{n}\ m_{1} \ \cdots\ m_{k})}$ is true in the \textsc{pca} $\plim^k(\A)$.  We will derive that an $\eon$-sentence 
\begin{align*}\exists e.\ \realize{e}{\forall \vec{n}(\dneg\exists m_1\forall m_2\cdots Q_{k}
 m_k.\ t\;\overline{\vec{n}}\; m_1\; m_2 \ldots  m_k=0\\
  \to\ \exists m_1\forall m_2\cdots Q_k m_k.\ t\;\overline{\vec{n}}\;
		      m_1\; m_2 \ldots  m_k=0)}\end{align*}
is true in $\plim^{k}(\A)$. 
 Let  $x$ be an element of $\plim^{k}(\A)$ and $\vec{n}$ be nonnegative integers. Suppose 
 \begin{align*}\plim^{k}(\A)\models\realize{x}{\dneg\exists m_1\forall m_2\cdots Q_{k}
 m_k.\ t\;\overline{\vec{n}}\; m_1\; m_2 \ldots  m_k=0}.\end{align*} 
By Lemma~\ref{lem:realpnftrue}~\eqref{assert:realpnftrue2}, we have
\begin{align*}\plim^{k}(\A)\models Q_1 m_1  Q_2 m_2  Q_3 m_3
 \cdots Q_k m_k.\  t\;\overline{\vec{n}}\;m_1\cdots m_k=0.
 \end{align*} For every closed $\eon$-term $t'$, the valuation of $t'$  in $\plim^{k}(\A)$ is obtained from the valuation of $t'$ in $\A$ by the canonical injection $\iota_{\plim^{k-1}(\A)}\circ\cdots\circ\iota_{\A}$. Hence
\begin{align}
\A\models{ Q_1 m_1  Q_2 m_2  Q_3 m_3
 \cdots Q_k m_k.\  t\;\overline{\vec{n}}\;m_1\cdots m_k=0}\label{hosi}
\end{align} 
where $Q_i=\exists\ (i:\mbox{odd});\ \ \forall\ (i:\mbox{even})$.

\begin{definition}\label{equiv:gj} For each \textsc{pca} $\A$ and each $j=0,\ldots,k-2$,  define a total function $g_j(\vec{n},
\nu_1,\ldots,\nu_{k - j})\ :\ \vec{\Nset}\times\Nset^{k - j}\to \{0,
1\}$ such that
\begin{align*}
 g_j(\vec{n}, \nu_1,\ldots,\nu_{k - j})=0 
\iff\A\models\left\{\begin{array}{r}
(Q_{k - j + 1} m_{k - j +1})\  (Q_{k - j + 2}
 m_{k - j + 2})\  \cdots (Q_k m_k).\\
	   t \ \overline{\vec{n}}\ \overline{\nu_1}\ \cdots \ \overline{\nu_{k - j}} \ m_{k - j + 1}\cdots
 m_k = 0.
\end{array}\right. 
\end{align*}\end{definition}

\begin{claim}\label{claim:represent}For each $j=0,\ldots,k-2$,  the
total function $g_j$ is   represented by some element of a \textsc{pca} $\plim^{j} \A$.\end{claim}
\begin{proof}We can define  $g_j$ as a $j$-nested  limiting function, as follows:
\begin{align*}
 g_0(\vec{n}, \nu_1,\ldots,\nu_k) &:= \min(1,l) \ \mbox{such that}\ \A\models t\
 \overline{\vec{n}}\ \overline{\nu_1}\ \cdots\  \overline{\nu_k}  =
 \overline{l} .\\
g_j (\vec{n}, \nu_1,\ldots,\nu_{k - j}) &:= \begin{cases}
					      \displaystyle \lim_{l }
						\max_{\nu_{k- j +
						1}<l } g_{j -
						1}(\vec{n}, \nu_1,\ldots,
						\nu_{k - j +1}), &
						(\mbox{$k - j$ is odd});\\
					     \displaystyle     \lim_{l }
					     \min_{\nu_{k - j + 1}<l }
					     g_{j - 1}(\vec{n},
					     \nu_1,\ldots, \nu_{k - j +1}),
					     & (\mbox{$k - j$ is
					     even}).\end{cases}
\end{align*} 
The claim is derived from \eqref{hosi} by induction on
 $j$, because $g_j$  is the limit of a bounded  monotone function which
 is either $\max_{...<l}$ or
 $\min_{..<l}$. Each $g_j$ is represented by some element of a
 \textsc{pca} $\plim^j \A$, because of \eqref{equiv:lim}. This completes the proof of Claim~\ref{claim:represent}.\end{proof}

We continue the proof of Theorem~\ref{thm:eonrealize}. For an $\eon$-formula 
\begin{align}\exists m_1 \forall m_2 \exists
m_3\cdots Q_k m_k.\ t \;\overline{\vec{n}}\; m_1\;\cdots m_k=0 \label{eonformula:hosi}
\end{align} 
appearing in \eqref{hosi},  consider the ``game'' represented by \eqref{eonformula:hosi} between the proponent $\exists$ and the opponent $\forall$. 
From any moves
 $\nu_2,\nu_4, \ldots,\nu_{2p - 2}$  $(p=1,2,\ldots, \lfloor (k+2)/2 \rfloor)$  taken by the opponent $\forall$, 
 the \emph{minimum} move
 $m_{2p-1}(\vec{n}, \nu_2,\nu_4,\ldots,
 \nu_{2p-2})$ by the proponent
 $\exists$ is given by
the following limiting function 
\begin{definition}\label{def:propmove}For $p=1,2,\ldots,\lfloor (k+2)/2
 \rfloor$, let
\begin{align*}
m_{2p-1}(\vec{n}, \nu_2,\nu_4,\ldots,
 \nu_{2p-2}):=\lim_l \ \g_{2p-1}(l , \vec{n}, \nu_2,\nu_4,\ldots,
 \nu_{2p-2}).
\end{align*}
Here the guessing function  $\g_1(l,\vec{n})=\mu m_1(\max_{\nu_2<l}
 g_{k-2}(\vec{n}, m_1, \nu_2))$ is obtained from $g_{k-2}$ by the
 bounded maximization $\max_{\nu_2<l}$ and the $\mu$-recursion. For
 $p>1$, define the function $\g_{2p-1}$ by the composition, the bounded
 maximization $\max_{\nu_{2p}<l}$ and the $\mu$-recursion $\mu m_{2p-1}$.
\begin{align*}
\g_{2p-1}(l , \vec{n}, \nu_2,\nu_4,\ldots,
 \nu_{2p-2})\\
:=
\mu m_{2p-1}.\ \Bigl(\max_{\nu_{2p}<l} g_{k-2p}\bigl(\vec{n},\ &
 m_1(\vec{n}),\ \nu_2,\ m_3(\vec{n}, \nu_2),\ \nu_4,\
 m_5(\vec{n}, \nu_2, \nu_4),\ldots,\\
\ \  &m_{2p-3}(\vec{n},
 \nu_2,\ldots, \nu_{2p-4}), \ \nu_{2p-2},\ m_{2p-1},\ \nu_{2p}\bigr) = 0\Bigr).
\end{align*}
\end{definition} 

For the function $m_{2p-1}$  defined above,  we have the following:
\begin{claim} \label{claim:moverepr}
Assume $p=1,2,3,\ldots,\lfloor (k+2)/2 \rfloor$. Then the
 following assertions hold:
\begin{enumerate}\item
\label{claim:minimum}
$m_{2p-1}(\vec{n}, \nu_2,\nu_4,\ldots, \nu_{2p-2})$ is indeed a total
 function of $\vec{n}, \nu_2,\nu_4,\ldots, \nu_{2p-2}$. 
 
For the game the $\eon$-formula \eqref{eonformula:hosi} represents, consider the following alternating sequence $\sigma$ of the proponent $\exists$'s moves and the opponent
 $\forall$'s moves of the game:
\begin{align*}
( m_1(\vec{n}),\ \nu_2,\ m_3(\vec{n}, \nu_2),\ \nu_4,\
 m_5(\vec{n}, \nu_2, \nu_4),\ldots, m_{2p-3}(\vec{n},
 \nu_2,\ldots, \nu_{2p-4}),  \nu_{2p-2}  )\in \Nset^{2p-2}
 \end{align*} 
  Suppose that $n_{2p-1}\in \Nset$ is a proponent's move that immediately follows 
 the sequence $\sigma$. Then  $n_{2p-1}\ge m_{2p-1}(\vec{n},
		      \nu_2,\nu_4,\ldots, \nu_{2p-2})$.

\item \label{assert:mrepresent}
 The limiting function $m_{2p-1}$ is represented by
 an element of a \textsc{pca} $\plim^k (\A)$.

\end{enumerate}
\end{claim}

\begin{proof} \eqref{claim:minimum} 
The proof is by induction on $p$. 
 The case where $p=1$
 is essentially due to the
 proof of Lemma~\ref{lem:jump}. Let $p>1$. Assume  (i) the opponent's
 $2p$-th move $\nu_{2p}$ is bounded from above by $l$, (ii) the parameter
 $\vec{n}$ of the game is supplied, and (iii) the opponent's moves
 $\nu_2,\nu_4,\ldots, \nu_{2p-2}$ so far are supplied. By the induction hypotheses, the functions
 $m_1, m_3,\ldots,m_{2p-3}$ are total. By this,
 Definition~\ref{equiv:gj}, and Definition~\ref{def:propmove}, we see that
 $\g_{2p-1}(l , \vec{n}, \nu_2,\nu_4,\ldots, \nu_{2p-2})$ is the \emph{minimum}
  $(2p-1)$-th move of proponent~$\exists$ under the assumption~(i).
 The guessing function $\g_{2p-1}$ is increasing
 with respect to the first argument $l$, because $l$ is the bound of the
 maximization in the definition of $\g_{2p1}-$.
 But there is $n_{2p-1}\in \Nset$ such
 that for every $l$, we have
 $\g_{2p-1}(l,\vec{n},\nu_2,\nu_4,\ldots,\nu_{2p-2}) \le n_{2p-1}$, because of
 \eqref{hosi} and Claim~\ref{equiv:gj}. Therefore the limit
 $m_{2p-1}(\vec{n},\nu_2,\nu_4,\ldots,\nu_{2p-2})$ of
 $\g_{2p-1}(l,\vec{n},\nu_2,\nu_4,\ldots,\nu_{2p-2})$
 with respect to $l$
 is indeed a total function, and actually the limit from
 below. Therefore it is minimum among the possible winning moves. This
 completes the proof of Assertion~\eqref{claim:minimum}.

\medskip
\eqref{assert:mrepresent} The  proof is by induction on $p$. 
Consider the case where $p=1$. Then $m_1(\vec{n})=\lim_l
 \g_1(l,\vec{n}) =\lim_l \mu m_1(\max_{\nu_2<l} g_{k-2}(\vec{n}, m_1,
 \nu_2)) = 0)$. By Claim~\ref{claim:represent}, the total function
 $g_{k-2}$ is represented by some element of $\plim^{k-2}(\A)$.
By Fact~\ref{fact:repr}, the function $\mu m_1(\max_{\nu_2<l} g_{k-2}(\vec{n}, m_1,
 \nu_2)) = 0)$ is represented by some element of $\plim^{k-2}(\A)$. 
By \eqref{equiv:lim}, the function
 $m_1(\vec{n})$ is represented by some element of
 $\plim^{k-1}(\A)$.  Fact~\ref{fact:caninj} implies the function
 $m_1(\vec{n})$ is represented by some element of $\plim^k(\A)$.

Next consider the case where $p>1$.
By Claim~\ref{claim:represent}, a (partial) function $g_{k-2p}$ is
indeed a total function represented by some element of the \textsc{pca}
$\plim^{k-2p}(\A)$.   By applying the bounded
 maximization and then
$\mu$-recursion to  $g_{k-2p}$, define a (partial) function of $l,\vec{n},
x_1,\nu_2,x_3,\nu_4,\ldots,x_{2p-3},\nu_{2p-2}$, as follows
\begin{align}
 \mu m_{2p-1}\left( \max_{\nu_{2p}<l} g_{k-2p} (\vec{n}, x_1, \nu_2, x_3,
 \nu_4,\ldots, x_{2p-3}, \nu_{2p-2}, m_{2p-1}, \nu_{2p})=0 \right). \label{fn:guess}
\end{align} 
Then the (partial) function  is also represented by some element of the \textsc{pca}
$\plim^{k-2p}(\A)$, because of Fact~\ref{fact:repr}.
Let a (partial) function
$F$ of $\vec{n},x_1,\nu_2,x_3,\nu_4,\ldots,x_{2p-3},\nu_{2p-2}$ be guessed
by a (partial) function \eqref{fn:guess} with respect to the variable
$l$. Then $F$ is represented by some element of a \textsc{pca}
$\plim^{k-2p+1}(\A)$ by \eqref{equiv:lim}. By Fact~\ref{fact:caninj},
the function $F$ is represented by some element of a \textsc{pca}
$\plim^k(\A)$.

 By
the induction hypothesis on $p$, all of $(p-1)$ total functions
$m_1(\vec{n})$, $m_3(\vec{n}, \nu_2)$, $m_5(\vec{n}, \nu_2,
\nu_4),\ldots, m_{p-1}(\vec{n}, \nu_2, \nu_4,\ldots, \nu_{2p-4})$ are
represented by some elements of the \textsc{pca} $\plim^k (\A)$.
By composing the $(p-1)$ total functions at the arguments $x_1,x_3,\ldots,x_{2p-3}$ of the
(partial) function $F(\vec{n},x_1,\nu_2,x_3,\nu_4,\ldots,x_{2p-3},\nu_{2p-2})$,
 we obtain
the total function $m_{2p-1}(\vec{n}, \nu_2,\nu_4,\ldots,\nu_{2p-2})$,
according to Definition~\ref{def:propmove}. Thus the total function $m_{2p-1}$ is
represented by some element of the \textsc{pca} $\plim^k(\A)$ by Fact~\ref{fact:repr}.
 \end{proof}

The $\eon$-formula~\eqref{eonformula:hosi} has a realizer $q\in
 \plim^k(\A)$. Here $q$ consists of the following elements of
 $\plim^k(\A)$ : the numerals
 $\num{\vec{n}}$, and the representatives of the total functions $m_1,
 m_3,\ldots$, in view of Claim~\ref{claim:moverepr}.
This completes the proof of Theorem~\ref{thm:eonrealize}.
\end{proof}

From Theorem~\ref{thm:eonrealize}, Theorem~\ref{thm:realPA} follows, by embedding $\ha + \sdne{k}$ in a
corresponding $\eon + (\sdneprime{k+1})$ where $\A$ is a \textsc{pca}.

\subsection{Proofs of Theorem~\ref{thm:fip}
  and Theorem~\ref{thm:smotroaka}}\label{subsec:proofs}

 We prove the non-derivability between the axiom schemes $\fip{k+1}$ and $\sdne{k+1}$~(Theorem~\ref{thm:fip}) by using iterated limiting
 realizability interpretation~(Theorem~\ref{thm:realPA}).
Let $A\; n\; m$ be a $\Pi^0_k$-formula with all the variables indicated. The axiom scheme $\sdne{k + 1}$ proves a sentence
\[
\forall n \left(\neg\neg\exists m.\, A\; n\; m\to \exists m.\,
 A\; n\; m\right).
\]
By this and $\fip{k + 1}$, we derive a sentence $\forall n\exists m.\, \left(\neg\neg\exists m.\, A\; n\; m\to A\; n\; m\right)$.
If the system  $\ha+\sdne{k + 1}+\fip{k + 1}$ is realizable by the
 \textsc{pca} $\plim^k(\Nset)$, then there exists $e\in \Nset$ such that
 for all $n \in\Nset$ the following conditions hold:
\begin{enumerate}
\item $f(n):=\lim_{t_1}\cdots\lim_{t_k} \{ e \}(t_1,\ldots,t_k,n)$ is
       convergent~(In this case, $f$ is $\emptyset^{(k)}$-recursive
      and thus  has a
      $\Pi^0_{k + 1}$-graph); and

\item If $A\;n\;m$ holds for some $m\in\Nset$, then $A\;n\;f(n)$ holds.
\end{enumerate} 
Because $A$ is a $\Pi^0_k$-formula and $f$ has a $\Pi^0_{k + 1}$-graph,
 $A\ n\ f(n)$ is a $\Pi^0_{k + 1}$-relation for $n$. Note that $\exists
 m.\, A\;n\;m$ iff $A\ n\ f(n)$. Because $A$ is an arbitrary
 $\Pi^0_k$-formula, we can choose $A$ such that $\exists m.\, A(\bullet,
 m)$ is a complete $\Sigma^0_{k + 1}$-relation. This contradicts against
 that $A n f(n)$ is a $\Pi^0_{k+1}$-relation. This completes the proof
 of Theorem~\ref{thm:fip}.

Every arithmetical relation $R$ satisfies the \emph{uniformization property}~\cite[]{MR982269}.
That is, if for all natural numbers $n$ there exists a natural number $m$
such that $R(n,m)$, then there exists an arithmetical function $f_R$
such that for all $n$ $R(n,f(n))$.  In Section~\ref{sec:limitingPCA}, we
provide a \textsc{pca} $\plim^\omega(\Nset )$ which represents all such $f_R$'s. In fact, the representative induces a
realizer of $\forall n \exists m.\, R(n,m)$.

 By our prenex normal form theorem~(Theorem~\ref{pnfthm}) and
our iterated limiting realizability
interpretations~(Theorem~\ref{thm:realPA}), we will slightly refine
Smory\'nski's result mentioned in Section~\ref{sec:intro} to
Theorem~\ref{thm:smotroaka}.

\medskip
\emph{Proof of Theorem~\ref{thm:smotroaka}.}
Assume otherwise. By Theorem~\ref{pnfthm},
 for every sentence
$A\in\Gamma$ there is a sentence $\hat A$ in
\textsc{pnf} such that $\hat A$ contains at most $n$ quantifiers and
 $\ha+\Gamma$ proves $A\lequiv \hat
A$.

Since $\ha+\Gamma$ is $n$-consistent, the sentence $\hat A$ in
 \textsc{pnf} is true in the standard model $\omega$. 

First consider the case $\hat A$ is a $\Pi^0_n$-sentence. Then $\hat A$
 can be written as
\begin{align*}
\forall x_1 \exists x_2 \forall x_3 \cdots Q_n x_n.\ R x_1 x_2 x_3 \cdots x_n
\end{align*} 
for some $\Sigma^0_{0}$-formula $R$.

Here $\forall x_3 \exists x_4 \cdots Q_n x_n.\ R x y x_3 \cdots x_n
$ defines a $\emptyset^{(n-2)}$-recursive  binary
 relation on $\omega$. By the relativization of the \emph{uniformization property for recursive
relations}~\cite[]{MR982269}, there exists some
 $\emptyset^{(n-2)}$-recursive function
\begin{align*}
f_2(x):=\mu y. \forall x_3 \cdots Q_n x_n.\ R x y x_3 \cdots x_n
\end{align*}
 such that
for each natural number $x_1$ a formula
$\forall x_3 \exists x_4 \cdots Q_n x_n.\ R x_1 f_2(x_1) x_3\cdots x_n$ is true
 on $\omega$.

 In this way, there are
 $\emptyset^{(n-2)}$-functions $f_2(x_1), f_4(x_1, x_3),\ldots$ such
 that 
\begin{align*}
\forall x_1 \forall x_3 \forall x_5  \cdots.\ R\; x_1\; f_2(x_1)\; x_3
\; f_4(x_1, x_3)\; x_5 \cdots
\end{align*} 
 holds on $\omega$. 

If  $\hat A$ is not a $\Pi^0_n$-sentence, then  $\hat A$ 
is written as
$\exists x_1 \forall x_2 \exists x_3 \cdots Q_n x_n.\ R x_1 x_2 x_3 \cdots x_n
$. Then there are natural number $n_1$ and
 $\emptyset^{(n-3)}$-recursive functions $f_3(x_2), f_5(x_2, x_4),\ldots$
 such that a formula $\forall x_2 \forall x_4\forall x_6 \cdots  R n_1 x_2 f_3(x_2)
 x_4 f_5(x_2, x_4)\cdots $ holds 
 on $\omega$. 

Because a \textsc{pca}
 $\plim^n(\Nset)$ can represent all the
$\emptyset^{(n)}$-functions $f_i$'s, we can find a realizer of $\hat
A$ in the \textsc{pca} $\plim^n(\Nset)$. 

 The \textsc{pca} $\plim^n (\Nset)$ realizes $\slem{n}$, and thus the formula
 $A\in\Gamma$ by Theorem~\ref{pnfthm}. Because the \textsc{pca} $\plim^n (\Nset)$ does not realize
 $\slem{n+1}$, we conclude $\ha+\Gamma\not\vdash\slem{n+1}$. This completes the proof of Theorem~\ref{thm:smotroaka}.
\bigskip

Our use of the complete set $\emptyset^{(n)}$ contrasts against Kleene's
use of \emph{extended Church's thesis} on defining \emph{effectively
true}~(\emph{general recursively true}) prenex normal form~(see
Section~79 of \cite{MR0051790}).

\cite{MR717260}  considered other versions  $HA$  and $PA$ of Heyting's
 arithmetic and Peano's
 arithmetic, where $HA$ and $PA$ are formalized by the language
\begin{align*}
\{0,1,2,3,\ldots;\ Z(,),\; S(,),\; A(,,),\; M(,,),\; =\},
\end{align*}
 and then proved ``Let $\Gamma$ be a set of sentences of
bounded quantifier-complexity, and suppose $HA+\Gamma\vdash PA$. Then
$HA+\Gamma$ is inconsistent.'' For the proof, assuming otherwise, Smory\'nski
constructed a model of $PA$ by applying Orey's compactness theorem  to
 $HA+\Gamma$. For Orey's compactness theorem, see
Chapter~4 of \cite{smorynski78:_nonst_model_of_arith}, \cite{MR0146082},
 \cite{MR1748522} and Theorem.~III~2.39~(i)$\iff$(ii) of \cite{MR1748522}.
 Then he constructed a Kripke model~(see Section~5.2.3 of
\cite{MR48:3699}) for $HA$ to derive the contradiction. See
\cite{MR717260} for a proof formalized within a formal system  $PA$ + $1$-Con($PA$). 

However, the referee wrote 
\begin{quote}\emph{
``As far as I can see Smory\'nski leaves open
whether there can be a consistent, classically unsound, finite extension
of $\ha$ that implies full sentential excluded third. I definitely do
believe there isn't. It is unknown whether the analogous result holds for all classically
 invalid constructive propositional schemes.''}
\end{quote}
The author cannot help but suppose that the language of the $\ha$
referee meant consists of the function symbols for all the primitive
recursive functions and the identity predicate.  It may be important to
construct Kripke models of such $\ha$ by employing model theory of
arithmetic.  The author thinks the referee's last sentence suggests a
possible research direction.

As in the proof of Theorem~\ref{thm:eonrealize}, we hope that the
wording ``game,'' ``strategy,'' ``move,'' and so on are useful to
explain realizability interpretation neatly, and that various
realizability interpretations of logical principles over $\ha$ are
related to  circumstances where one or the other player of
a various game have a winning strategy, and the consequences of the
existence of such strategies.

\section*{Acknowledgement}  The author acknowledges Susumu Hayashi,
Pieter Hofstra,
      Stefano Berardi, an anonymous referee and Craig Smory{\'n}ski. The
      anonymous referee informed the author of Smory{\'n}ski's work and
      Smory{\'n}ski let the author know his course notes.

\end{document}